\newtheorem{thm}{Theorem}
\newtheorem*{thmA}{Theorem A}
\newtheorem*{thmB}{Theorem B}
\newtheorem*{thmF}{Theorem F}
\newtheorem{lemma}[thm]{Lemma}
\newtheorem{sublem}[thm]{Sublemma}
\newtheorem{defn}[thm]{Definition}
\newtheorem{prop}[thm]{Proposition}
\newtheorem{cor}[thm]{Corollary}
\newtheorem*{corC}{Corollary C}
\newtheorem*{corD}{Corollary D}
\newtheorem*{corE}{Corollary E}
\theoremstyle{remark}
\newtheorem{prob}{Problem}
\newtheorem{ex}[thm]{Example}
\DeclareMathOperator{\M}{\mathcal{M}}
\DeclareMathOperator{\Diff}{Diff}
\DeclareMathOperator{\PH}{\mathcal{PH}}
\DeclareMathOperator{\AM}{\mathcal{AM}}
\DeclareMathOperator{\Graph}{graph}
\DeclareMathOperator{\supp}{supp}
\DeclareMathOperator{\ess}{ess}
\DeclareMathOperator{\vol}{vol}
\DeclareMathOperator{\Lip}{Lip}
\DeclareMathOperator{\Jac}{Jac}
\DeclareMathOperator{\B}{\mathcal{B}}
\DeclareMathOperator{\T}{\mathcal{T}_{\epsilon}}
\DeclareMathOperator{\umu}{\underline{\mu}}
\DeclareMathOperator{\unu}{\underline{\nu}}
\DeclareMathOperator{\A}{\mathbf{A}}
\DeclareMathOperator{\F}{\mathcal{F}}
\DeclareMathOperator{\K}{\mathbf{K}}
\DeclareMathOperator{\Dl}{Diff_{loc}^2}
\DeclareMathOperator{\uiota}{\underline{\iota}}
\DeclareMathOperator{\mue}{\mu_{\epsilon}}
\DeclareMathOperator{\nue}{\nu_{\epsilon}}
\DeclareMathOperator{\ff}{\mathbf{f}}
\DeclareMathOperator{\tmu}{\tilde{\mu}}
\title{Robust Ergodic Properties in Partially Hyperbolic Dynamics}
\author{Martin Andersson\footnote{This work was supported by CNPq (Brazil).}}
\begin{document}

\maketitle

\begin{abstract}

We study ergodic properties of partially hyperbolic systems  whose central direction is mostly contracting. Earlier work of Bonatti, Viana \cite{BV} about existence and finitude of physical measures is extended to the case of local diffeomorphisms. Moreover, we prove that such systems constitute a $C^2$-open set in which statistical stability is a dense property. In contrast, \emph{all} mostly contracting systems are shown to be stable under small random perturbations.
\end{abstract}

\section{Introduction}

A sound approach to understanding smooth dynamical systems consists of giving a statistical description of most orbits. It is sensible due to the extreme complexity of the orbit structures, so frequently encountered in dynamical systems with some expanding behaviour. In practice this often boils down to finding
out whether a given system $f$ has a \emph{physical measure}, i.e. a probability measure $\mu$ for which the \emph{basin}
\begin{equation}\label{basin}
\B(\mu):= \{ x\in M: \frac{1}{n} \sum_{k=0}^{n-1} \delta_{f^k(x)} \overset{\text{weakly}}{\rightarrow} \mu \}
\end{equation}
has positive Lebesgue measure. Successful work on Axiom A diffeomorphisms \cite{Ru, Si, Y} has lead dynamiscists to believe that many dynamical systems can be satisfactorily described on a statistical basis --- a view taken by Palis in his well-known conjecture on the denseness of finitude of attractors \cite{P}. A description of a dynamical system in terms of physical measures can be considered rather complete if it encapsulates topics like

\begin{description}
\item[Existence] There are physical measures for the system.
\item[Finitude] The number of physical measures is finite.
\item[No holes] Lebesgue almost every point in the manifold $M$ belongs to the basin of some physical measure.
\item[Statistical stability] All physical measures persist under small perturbations.
\item[Stochastic stability] Physical measures describe random orbits of the system under small noise.
\end{description}

Since the seventies, physical measures have been proved to exist in much greater generality than Axiom A diffeomorphisms, including some partially hyperbolic systems \cite{BV, ABV, T}, the H\'enon family \cite{BY}, and others. 

In the present work, we study an open set of $C^2$ partially hyperbolic local diffeomorphisms 
$f:M \rightarrow M$ on compact Riemannian manifolds, mostly contracting along the central direction. Such systems provide a non-invertible generalization of mostly contracting diffeomorphisns, first studied by Bonatti, Viana \cite{BV}, and later by Castro \cite{C} and Dolgopyat \cite{D}; however this time the focus is on statistical stability. Particularily under the possibility of coexistence of several physical measures on the same attractor.

A conceivable obstacle to statistical stability is the seemingly pathological phenomenon, present in a fascinating example due to Kan \cite{K}, exhibiting two physical measures supported in the same transitive piece of the dynamics. It seems likely that this phenomenon can be destroyed by small perturbations of the system, thus leading to a bifurcation in the set of physical measures. Kan's example falls into a class of systems which we nowadays call partially hyperbolic with mostly contracting central direction. It is known from the work of \cite{BV} that if the unstable foliation is minimal for a mostly contracting diffeomorphism, then there is only one physical measure. To what extent this  occurs is not known, although some research has been made on the subject \cite{BDU, PuSa},  suggesting it to be a common feature. 

Nevertheless, the present work introduces a new set of techniques to deal with statistical (and stochastic) stability of mostly contracting systems, independently of whether they exhibit Kan's phenomenon or not. We prove:
\begin{itemize}
 \item Mostly contracting contracting local diffeomorphisms have a finite number of physical 
measures and satisfy the no holes property.
\item Having mostly contracting central direction is a robust property.
\item The number of physical measures vary semi-continuously with the dynamics.
\item Sytems that do not alter the number of physical measures under small perturbations are statistically stable.
\item These make up an open and dense subset of all mostly contracting systems.
\item In particular, all systems with a unique physical measure are statistically stable.
\item Among mostly contracting conservative diffeomorphisms, every ergodic system is necessarily stably ergodic.
\item All mostly contracting systems are stochastically stable. 
\end{itemize}

A key feature of the arguments used is that they apply to non-invertible maps just as well as diffeomorphisms, provided that there are no critical points. This is done by replacing the traditional Gibbs-$u$ states \cite{PeSi} with a multi-dimensional analogue of Tsujii's admissible measures \cite{T}.
The current approach is even more advantageous in the non-invertible case, where uniqueness of the physical measure is harder to obtain due to the lack of unstable foliation.

\subsection*{Acknowledgements}

This being my first independent work in mathematical research, I would like to seize the oportunity to thank all those people that have tought me mathematics; particularily those who have put trust into my academic progress. I am refering here to Stefano Luzatto, who introduced me to dynamical systems, and Marcelo Viana, for accepting me as his student at IMPA where I have learnt most of what I know in the field. I also thank Fl\'avio Abdenur for acting as an encouraging force and for being such a fierce promoter of semi-continuity arguments. My fellow student Yang Jiagang owes a great thank for pointing out Corollary D and its proof, as do all other students at IMPA, with whom I have exchanged ideas on a daily basis. My last mention goes to Augusta for providing such a divine Bob\'o de Camar\~ao --- it has certainly had a good effect on my work!

\section{Some preliminary notions and description of results}

Let $M$ a smooth compact Riemannian manifold. To avoid trivial statements, we will suppose the dimension to be at least two. Denote by $\Dl(M)$ the space of $C^2$ local diffeomorphisms on $M$, i.e. $C^2$ maps whose derivative is of full rank at every point. It is an open subspace of $C^2(M, M)$ and, in particular, contains all diffeomorphisms. Elements of $\Dl(M)$ will be  referred to as \emph{systems}, or simply \emph{maps}.

We deviate slightly from standard terminology and say that $\Lambda$ is an attractor for the system $f$ if $\Lambda$ is a compact $f$-invariant set and there exists an open neighbourhood $U$ of $\Lambda$, called a \emph{trapping region}, such that 
\[f(\overline{U}) \subset U \text{ and } \Lambda = \bigcap_{n \geq 0} f^n(U). \]
In other words, there is no requirement of transitivity and, in particular, $M$ itself is always an attractor with trapping region $M$.

\subsection{Partial Hyperbolicity}

Several notions of partial hyperbolicity may currently be found in the literature, of which the most widely known requires a decomposition of the tangent bundle into three complementary subbundles (see \cite{AV} for discussion). The type cinsidered in this work  requires only two complementary subbundles, one of which is uniformely expanded under the action of the system and dominating the other. It is usually referred to as partial hyperbolicity of type $E^u \oplus E^{cs}$.

Thus an attractor $\Lambda$ is \emph{partially hyperbolic} under $f$ if there exists a splitting 
$T_{\Lambda}M = E^c \oplus E^u$ into non-trivial subspaces,  a constant $0<\tau<1$, and an integer $n_0$ such that 
\begin{eqnarray}\label{expansion}
\|(Df_{\vert_{E_x^u}}^n)^{-1}\|&\leq& \tau^{n-n_0} \\
\label{domination}
\|Df_{\vert_{E_x^c}}^n\| \|(Df_{\vert_{E_x^u}}^n)^{-1}\| &\leq& \tau^{n-n_0}
\end{eqnarray}
both hold for every $x\in \Lambda$ and every $n \geq 0$.

The subspace $E_x^c$ above is necessarily unique, and varies continuously with $x$. On the other hand, $E_x^u$ is not. In fact, when $f$ is non-invertible, there is typically no invariant unstable direction at all. Still, we can always define a strictly invariant conefield 
\[
 S_x^u = \{v^c \oplus v^u \in E_x^c \oplus E_x^u: v^u \geq \alpha v^c\}
\]
 for some $\alpha>0$. Strict invariance here means that $Df_x S_x^u$ is contained in the 
interior of $S_{f(x)}^u$ for every $x\in U$.  The subspace $E_x^c$ is characterised by those vectors $v\in T_x M$ 
such that $Df_x^n v\notin S_{f^n(x)}^u$ for every $n \geq0$. There is no harm in supposing that $E^u$ is smooth, say $C^{\infty}$. The lack of invariance of $E^u$ is reflected in the following observation: Let $\ldots x_{-2}, x_{-1}, x_0, \ \ldots y_{-2}, y_{-1}, y_0$ be two different pre-orbits of a point $x_0=y_0$. Then $\bigcap_{n\geq0} Df^nS_{x_{-n}}^u$ is not necessarily the same as $\bigcap_{n\geq0} Df^nS_{y_{-n}}^u$. 

Upon possibly replacing $U$ by a subset, and slightly altering the constants $n_0, \tau$, we may suppose that the splitting and unstable cone field extend to the whole of $U$, and (\ref{expansion}), (\ref{domination}) hold for every $x \in U$.

We denote by $\PH(U, S^u)$ those $f \in \Dl(U)$ that leave $U$ and $S^u$ strictly invariant and admit a partially hyperbolic splitting satisfying (\ref{expansion}), (\ref{domination}) for some $\tau<1$. It is an open subset of $\Dl(M)$. 

We call $E^c$ the \emph{central direction} of $f$, and use the notation 
\[D^cf := Df_{\vert E^c}\]
in all that follows. The letters $c$ and $u$ will also denote the dimensions of $E^c$ and $E^u$ --- the central and unstable dimensions.

\subsection{Mostly contracting central direction}\label{mostcont}

The maximum central Lyapunov exponent is the map 
\begin{eqnarray*}
\lambda_+^c: \PH(U, S^u)\times U &\rightarrow& \mathbb{R} \\ 
(f, x) &\mapsto& \limsup_{n \rightarrow \infty} \frac{1}{n} \log \|D^c f^n(x)\|. 
\end{eqnarray*}

We rephrase the definition of mostly contracting diffeomorphisms used in \cite{BV} suitably into our context.

\begin{defn}\label{mostly}
A system $f\in \PH(U, S^u)$ is mostly contracting along the central direction if, given any disc $D \subset U$ (at least $C^{1+ \Lip}$) tangent to $S^u$, there exists a subset $A \subset D$ of positive Lebesgue measure such that $\lambda_+^c(f, x) < 0$ for every $x \in A$. 
\end{defn}

After characterising this definition in Section \ref{character}, it will become clear that it coincides with that of \cite{BV} in the case of diffeomorphisms.
The space of mostly contracting systems in $\PH(U, S^u)$ will be denoted by $MC(U, S^u)$. We shall be irresponsible and omit explicit mentioning of the trapping region and unstable conefield. Thus when saying that $f$ is partially hyperbolic ($f\in \PH$), it is understood that there exists some  trapping region $U$ and a dominated splitting $T_U M=E^u\oplus E^c$ with associated invariant conefield $S^u$, constants $\tau, n_0$ satisfying (\ref{expansion}) and (\ref{domination}) for every $x \in U$ and $n \geq 0$. Similarly for $MC$. All objects except $E^c$ can be applied on maps in some $C^2$ neighbourhood of $f \in \PH(U, S^u)$ to yield partial hyperbolicity. The central distribution $E^c$ varies with the map, although in a continuous fashion.

The mostly contracting condition was created in \cite{BV} to prove existence, finitude and the no holes property of physical measures for partially hyperbolic diffeomorphisms. We are going to develop techniques that allow for a generalisation of their result into a non-invertible context.
 
\begin{thmA}\label{thmA}
 Every $f$ in $MC$ possess a finite number of physical measures and the union of their basins 
of attraction cover Lebesgue almost every point of $U$.
\end{thmA}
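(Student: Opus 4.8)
The plan is to follow the overall strategy of Bonatti--Viana \cite{BV} for mostly contracting diffeomorphisms, but with the Gibbs $u$-states --- unavailable here, since a non-invertible $f$ possesses no unstable foliation --- replaced by a class of \emph{admissible measures}, a multi-dimensional analogue of Tsujii's construction \cite{T}: Borel probabilities $\mu$ on $U$ admitting a disintegration into measures carried by $C^{1+\Lip}$ discs tangent to $S^u$, whose conditional densities are log-Lipschitz with a fixed uniform constant. First I would verify that this class $\AM$ is non-empty, convex and weak-$*$ compact, and that $f_*$ maps $\AM$ into itself; the invariance is where the uniform expansion \eqref{expansion} along $S^u$ and the standard bounded-distortion estimate for $\Jac(f|_{S^u\text{-discs}})$ enter, regularising the pushed-forward densities back into the admissible class after finitely many iterates. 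It follows that $f$ has invariant admissible measures; since log-Lipschitz regularity of conditionals survives ergodic decomposition, the ergodic components of an invariant admissible measure are again admissible, so ergodic invariant admissible measures exist. Moreover the Cesàro averages $\frac1n\sum_{k=0}^{n-1}f^k_*(\mathrm{Leb}_D)$ of Lebesgue measure on any $S^u$-disc $D$ accumulate, as $n\to\infty$, on invariant admissible measures.

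Next I would pin down which of these are physical. By Definition \ref{mostly} every $S^u$-disc carries a positive-Lebesgue-measure set on which $\lambda_+^c(f,\cdot)<0$; since an invariant admissible measure has conditionals equivalent to Lebesgue along $S^u$-discs, it must assign positive mass to $\{\lambda_+^c<0\}$, hence some ergodic component has all central Lyapunov exponents negative. For such an ergodic admissible $\mu$ I would invoke non-uniform hyperbolic theory along $E^c$: $\mu$-a.e.\ point has a local strong-stable disc tangent to $E^c$, these form an absolutely continuous lamination, and on a positive-$\mu$-measure subset where the Pesin constants and the disc sizes are controlled, the saturation of the $\mu$-generic points by these central-stable discs has positive Lebesgue measure in $U$ and lies mod $0$ in $\B(\mu)$. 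Thus every ergodic admissible $\mu$ with negative central exponents is physical; conversely, for $f\in MC$ any physical measure is of this form, since its basin has positive Lebesgue measure, hence meets some $S^u$-disc $D$ in positive $\mathrm{Leb}_D$-measure, forcing the empirical averages along $D$ --- which converge only to admissible measures --- to equal it.

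For the no-holes statement I would argue by contradiction. If the set $Z\subset U$ of points belonging to no basin had positive Lebesgue measure, then by Lebesgue density and Fubini in a foliation box of the cone field, some $S^u$-disc $D$ would meet $Z$ in positive $\mathrm{Leb}_D$-measure. Iterating $D$ forward and using the mostly contracting property along the images, a positive fraction of $\mathrm{Leb}_D$ gets negative central exponents and becomes generic for physical measures; a maximality/recursion argument (restrict to the putative positive-measure set of bad points, iterate, and locate a ``good'' sub-disc in some forward image, invoking once more the absolute continuity of the central-stable laminations of the ergodic physical measures involved) then upgrades this to $\mathrm{Leb}_D$-a.e.\ $x\in D$ converging to a physical measure, contradicting $\mathrm{Leb}_D(Z)>0$.

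The main obstacle is finiteness. The mechanism I expect to use is that each ergodic physical measure $\mu$, being invariant and admissible, has a support that is $u$-saturated: through each point of a dense subset of $\supp\mu$ passes a disc of some radius $\delta$ tangent to $S^u$ and contained in $\supp\mu$; on such a disc a definite fraction $\eta$ of the Lebesgue measure lies in $\B(\mu)$. Since the basins of distinct ergodic physical measures are pairwise disjoint mod $0$ and $U$ has bounded geometry, only finitely many can coexist. The delicate --- and genuinely non-invertible --- point is the \emph{uniformity} of $\delta$ and $\eta$ over all physical measures simultaneously: without a global unstable foliation one cannot simply quote the size of global unstable leaves. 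I would extract this uniformity from the characterisation of the mostly contracting condition in Section \ref{character}, which supplies uniform constants in the control of $\lambda_+^c$ along $S^u$-discs, together with compactness of $U$ and the global bounded-distortion estimate, making the Pesin block sizes and the absolute-continuity constants of the central-stable laminations uniform. Getting this uniformity right is, I expect, where most of the work lies.
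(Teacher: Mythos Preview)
Your outline is sound and would likely succeed, but for both the finiteness step and the no-holes step it diverges substantially from the paper's argument, and in a way that makes your life harder than necessary.

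The paper's key device is not uniformity of Pesin constants or disc sizes, but a single \emph{lower semi-continuity} fact: if $\mathcal{F}$ is any measurable union of local stable leaves (for instance $\mathcal{N}\cap\B(\mu_k)$), then the map $\umu\mapsto\uiota(\umu)(\mathcal{F})$ is lower semi-continuous on lifts. This comes directly from absolute continuity of the stable lamination and the topology put on carriers. With this in hand, finiteness is a three-line contradiction: if $\mu_{n_j}\to\mu$ were an infinite sequence of distinct ergodic admissible measures, take lifts $\umu_{n_j}\to\umu$ in some compact $\M(\A(a,C))$, pick $k$ with $\alpha_k=\mu(\B(\mu_k))>0$, and observe $\liminf_j\mu_{n_j}(\B(\mu_k)\cap\mathcal{N})\ge\alpha_k>0$, which is absurd since $\mu_{n_j}(\B(\mu_k))=0$ for $n_j\ne k$. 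No uniform $\delta,\eta$ are needed. Your volume-counting approach would work too, but the uniformity you flag as ``where most of the work lies'' is exactly what the paper's semi-continuity trick circumvents; the Pliss-type estimates you would need are only developed later in the paper (Lemmas~\ref{size} and~\ref{largerthantheta}) for Theorem~B.

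For no-holes the paper is likewise more direct and explicitly \emph{avoids} the Lebesgue-density/Fubini-in-a-foliation-box manoeuvre you propose. Instead it strongly approximates $m_{|U}$ by an admissible $\nu_0$, averages to get an invariant admissible limit $\nu=\sum\alpha_i\mu_i$, and applies the same lower semi-continuity to $\mathcal{F}=(\bigcup_i\B(\mu_i))\cap\mathcal{N}$ to pull $\nu(\mathcal{F})=1$ back to $\nu_0(\mathcal{F})=1$, hence $m_{|U}(\bigcup_i\B(\mu_i))>1-\epsilon$. Your density-point route is the classical one and should work, but it requires the ``recursion'' you sketch to be made precise, whereas the paper's argument is a one-shot application of semi-continuity plus invariance of $\mathcal{F}$.

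One minor correction: the admissible densities in the paper are merely log-\emph{bounded} ($C^{-1}\le\phi\le C$), not log-Lipschitz; this weaker regularity is all that survives the disintegration technique of Section~\ref{technique} and is already enough for compactness of each $\AM(a,C)$.
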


\subsection{Robustness and Statistical stability}

The main theorem in this paper addresses robustness properties of maps in $MC$.
It is not clear from the definition whether the mostly contracting condition is open or not. Neither does Theorem A (nor its predecessor Theorem A in \cite{BV}) give any hint as to what might happen with the physical measures under small perturbations of the map in question. In his article \cite{D}, Dolgopyat addresses these kind of questions for some mostly contracting systems on three dimensional manifolds, satisfying some additional properties which in particular imply uniqueness of the physical measure. He achieves statistical stability and strong statistical properties such as exponential decay of correlations. The intention of this work is rather different, as we will not bother about the number of physical measures. Nor do we study any strong statistical properties, but will only be concerned with looking at how the physical measures depend on the system.

\begin{defn}
Let $f\in \Dl(M)$ be a system having a finite number of physical measures $\mu_1, \ldots, \mu_N$ in some trapping region $U$. 
We say that $f$ (strictly speaking  the pair $(f, U)$) is statistically stable if there exists a neighbourhood $\mathcal{U}$ of $f$, and weakly continuous functions 
\[\Phi_1, \ldots, \Phi_N: \mathcal{U} \rightarrow \M(M)\] such that, given any $g\in \mathcal{U}$, the physical measures of $g$, supported in $U$, coincide precisely with $\Phi_1(g), \ldots, \Phi_N(g)$.

Similarly, given any subset $\mathcal{C}\subset\Dl(U)$, we define statistical stability under perturbations within $\mathcal{C}$ by requiring that the functions $\Phi_1, \ldots, \Phi_N$ be defined on $\mathcal{C}$ only.
\end{defn}

\begin{thmB}\label{thm2}
\

\begin{enumerate}

\item
$MC$ is open in the $C^2$ topology.

\item\label{item2}
The number of physical measures supported in $U$ is an upper
semi-continuous function $MC\rightarrow \mathbb{N}$.

\item\label{itemC}
Let $\mathcal{C}$ be any subset of $MC$ such that the number of physical measures
supported in $U$ is constant for maps in $\mathcal{C}$. Then maps in $\mathcal{C}$ are statistically stable under
perturbations within $\mathcal{C}$.
\end{enumerate}
\end{thmB}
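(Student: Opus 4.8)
The plan is to build on two ingredients supplied earlier in the paper, both of which I take as given: the characterisation of the mostly contracting condition promised in Section~\ref{character}, and the internal structure of the proof of Theorem~\ref{thmA}. From Section~\ref{character} I expect a \emph{uniform, finite-time} reformulation of membership in $MC$: roughly, $f\in MC$ iff there are an integer $N$, a number $\lambda>0$ and geometric constants (bounds on curvature, lower bounds on densities of admissible discs) with $\int\frac{1}{N}\log\|D^c f^N\|\,d\nu<-\lambda$ for every $\nu\in\AMF$, this being turned by a Pliss/large-deviations argument into the negative-exponent set required by Definition~\ref{mostly}. From Theorem~\ref{thmA} I take that the physical measures of any $h\in MC$ are exactly the ergodic invariant admissible measures, i.e. the members of $\AMFE$ --- finitely many, with basins covering $U$ mod $0$.

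\emph{Part (1).} The space of admissible measures is weak-$*$ compact; $f^N$ and the central bundle $E^c$, hence $D^c f^N$, vary continuously with $f$ in the $C^1$ topology (continuity of $E^c$ was noted in the excerpt); and admissibility is $C^2$-robust, with constants that can be frozen over a $C^2$-neighbourhood $\mathcal U$ of $f$. Since the displayed inequality compares continuous functions over a compact set and involves only the \emph{fixed} iterate $f^N$, shrinking $\mathcal U$ makes it hold --- with $\lambda$ halved --- for every $g\in\mathcal U$ and every admissible measure of $g$, so the characterisation puts $g$ in $MC$. The one delicate point is that admissible measures of $g$ are genuinely weak-$*$ close to those of $f$; this holds because a $C^2$-small perturbation displaces an admissible disc, and its first $N$ iterates, only $C^1$-slightly.

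\emph{Part (2).} From the proof of Theorem~\ref{thmA} I would extract a finite combinatorial datum computing the number of physical measures of $h$: a finite family $\mathcal P$ of admissible ``probe'' discs of a definite size, adapted to all maps in a fixed $C^2$-neighbourhood of $f$, together with a directed edge $P\to Q$ whenever some iterate $h^m(P)$ interior-contains an admissible subdisc comparable to $Q$, designed so that the physical measures of $h$ correspond bijectively to the terminal (sink) strongly connected components of this graph. For fixed $m$ the edge condition is open in $h$, so every edge present for $f$ persists for $g$ near $f$ and the reachability relation can only grow. A sink component of the $g$-graph, being forward-closed for $g$, is forward-closed for $f$ as well (fewer edges), hence contains a sink component of the $f$-graph; distinct $g$-sinks are disjoint, so contain distinct $f$-sinks; therefore the number of physical measures of $g$ is at most that of $f$. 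That is the required upper semicontinuity. The substantive difficulty here is to engineer $\mathcal P$ and the edge relation so that the bijection with physical measures holds \emph{uniformly} over the neighbourhood --- in particular so that distinct physical measures supported in a common transitive set, as in Kan's example, remain separated by the combinatorics.

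\emph{Part (3).} Let $\mathcal C\subset MC$ have a constant number $N$ of physical measures, fix $f\in\mathcal C$ with physical measures $\mu_1,\dots,\mu_N$, and let $\mathcal U$ be the neighbourhood from Parts~(1)--(2). Constancy forces the inequality of Part~(2) to be an equality throughout $\mathcal C\cap\mathcal U$, so no sink component of the $f$-graph ever merges with another: the combinatorial datum is rigid. The key --- and, I expect, the main obstacle of the whole statement --- is to promote this rigidity to a \emph{uniform equidistribution}: rigidity rules out ``ghost'' recurrence (a $g$-invariant admissible measure near $f$ that is missed by the physical measures of $g$, which is exactly what would make Ces\`aro averages slow down), so that for a fixed positive-Lebesgue set $B_i$ inside a probe disc with $B_i\subset\B(\mu_i)$ mod $0$, the averages $\frac{1}{n}\sum_{l=0}^{n-1}(g^l)_*(\mathrm{Leb}|_{B_i}/\mathrm{Leb}(B_i))$ converge, \emph{uniformly over} $g\in\mathcal C\cap\mathcal U$, to a convex combination of the physical measures of $g$ depending weak-$*$ continuously on $g$ and equal to $\mu_i$ at $g=f$. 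Running this for every $i$ forces the simplex spanned by the physical measures of $g$ to be Hausdorff-close (in the weak-$*$ sense) to that spanned by $\mu_1,\dots,\mu_N$, and since both have $N$ vertices with the $\mu_i$ affinely independent, the physical measures of $g$ are one near each $\mu_i$ and eventually $\tfrac12\min_{j\ne k}\dist(\mu_j,\mu_k)$-separated. Hence the combination for $B_i$ concentrates on a single physical measure of $g$, namely the one near $\mu_i$, which we name $\Phi_i(g)$; distinct $B_i$ force distinct $\Phi_i(g)$ by disjointness of basins, so $\Phi_1,\dots,\Phi_N$ enumerate all $N$ physical measures of $g$. Weak continuity of each $\Phi_i$ is then the standard interchange of limits: fix $n$ large from the uniform estimate, use weak continuity of $g\mapsto\frac{1}{n}\sum_{l=0}^{n-1}(g^l)_*(\mathrm{Leb}|_{B_i}/\mathrm{Leb}(B_i))$, and let $g\to g_0$. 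Thus $(f,U)$ is statistically stable under perturbations within $\mathcal C$.
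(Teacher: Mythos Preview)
Your Part (1) is essentially the paper's argument: Proposition~\ref{alternative} characterises $MC$ by $\hat\lambda_+^c(f,\mu)<0$ for all $\mu\in\AM_f$; upper semicontinuity of $\hat\lambda_+^c$ on the closed set $S$ (Lemma~\ref{semicont}, Proposition~\ref{closed}) together with compactness of $\AM_f$ (Corollary~\ref{compactness}) then gives openness by a finite-cover argument.

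Parts (2) and (3), however, contain a genuine gap. Your proposed combinatorial scheme---a finite graph on probe discs with edges given by forward reachability, and physical measures identified with terminal strongly connected components---cannot separate distinct physical measures whose basins are intermingled in the same transitive piece, which is precisely the Kan phenomenon the paper is designed to handle. You flag this yourself as a ``substantive difficulty'' but offer no mechanism to overcome it; forward iteration of \emph{unstable} discs alone carries no information about which of several intermingled basins a given disc-point lies in, so there is no reason a bijection between sinks and physical measures should exist, let alone hold uniformly over a neighbourhood. Your Part (3) then inherits the problem, and the ``uniform equidistribution'' you invoke is asserted rather than proved.

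The paper's route is entirely different and rests on three ingredients you do not invoke. First, Lemma~\ref{size}: for any $g$ near $f$ and any $\mu\in\AM_g$, a uniform proportion $\theta$ of $\mu$-mass lies in the set $L_K(g)$ of points whose stable set contains a disc of fixed radius $K$. Second, Proposition~\ref{balancedlift}: every $\mu\in\AM_g$ admits a \emph{balanced} lift $\umu$, meaning $\Pi_*\umu$ is comparable to $\mathsf{g}_*\mu^{\ominus}$ on the Grassmannian $G^uM$; this forces $\mu$-generic carriers to appear in \emph{every} $G^uM$-region that the natural extension $\mu^{\ominus}$ charges. Third, absolute continuity of the stable holonomy. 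The mechanism: cover each $\supp\mu_i^{\ominus}\subset G^uM$ by small balls $B_{ij}$; if two distinct physical measures $\nu_k,\nu_l$ of $g$ both charged all the balls covering some $\mu_i$, balanced lifts would produce a $\nu_l$-generic carrier $\Gamma^l\in\mathcal L(g)$ and a $\nu_k$-generic carrier $\Gamma^k$ with $\Pi(\Gamma^l),\Pi(\Gamma^k)$ in a common $B_{ij}$; the large stable discs through $\Gamma^l$ then meet $\Gamma^k$ on a set of positive $(\Gamma^k,1)$-measure by absolute continuity, forcing $\nu_k=\nu_l$. This yields an injection from the physical measures of $g$ to those of $f$, hence $N'\le N$; and when $N'=N$ the same argument pins each $\nu_l$ near exactly one $\mu_i$, giving Part (3) directly, with no appeal to uniform Ces\`aro convergence.
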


By our choice of definition, statistical stability does not make sense if the number of physical measures changes abruptly. Theorem B states that whenever statistical stability makes sense, it holds. In other words, a drop in the number of physical measures is the \emph{only} obstacle to statistical stability among mostly contracting systems, so Theorem B is the strongest possible result of its kind. Let us take a look at some of its consequences, the first of which is immediate. 

\begin{corC}
 Maps in $MC$ having precisely one physical 
measure form an open set, and are therefore statistically stable.
\end{corC}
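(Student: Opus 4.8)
The plan is to derive Corollary C directly from Theorems A and B, which together already contain everything needed. I would first fix notation: for $g \in MC$ let $N(g) \in \mathbb{N}$ denote the number of physical measures of $g$ supported in the trapping region $U$, and set $\mathcal{O} = \{\, g \in MC : N(g) = 1 \,\}$. The two things to establish are that $\mathcal{O}$ is open and that every $f \in \mathcal{O}$ is statistically stable.

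The first step is to show $\mathcal{O}$ is open. Fix $f \in \mathcal{O}$. By Theorem B(1) there is a $C^2$-neighbourhood of $f$ contained in $MC$, so I may work entirely inside $MC$, where Theorems A and B are available. By the upper semi-continuity in Theorem B(\ref{item2}), together with the fact that $N$ is integer-valued, there is a neighbourhood $\mathcal{U}$ of $f$ (inside $MC$) on which $N(g) \le N(f) = 1$. On the other hand, Theorem A guarantees the reverse inequality $N(g) \ge 1$ for every $g \in MC$, in particular for every $g \in \mathcal{U}$. Hence $N \equiv 1$ on $\mathcal{U}$, that is $\mathcal{U} \subset \mathcal{O}$, and $\mathcal{O}$ is open.

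The second step is to invoke Theorem B(\ref{itemC}) with $\mathcal{C} = \mathcal{O}$. Since $N$ is constant (equal to $1$) on $\mathcal{O}$, that part of Theorem B furnishes a weakly continuous function $\Phi_1 : \mathcal{O} \to \M(M)$ such that, for every $g \in \mathcal{O}$, the unique physical measure of $g$ supported in $U$ is exactly $\Phi_1(g)$; in other words, maps in $\mathcal{O}$ are statistically stable under perturbations within $\mathcal{O}$. Because $\mathcal{O}$ is open by the first step, it is itself a neighbourhood of each of its points, so "statistically stable under perturbations within $\mathcal{O}$" coincides with "statistically stable", and the proof is complete.

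I do not anticipate a genuine obstacle here, as the substance is carried entirely by Theorems A and B; the proof is a short assembly. The only points requiring a little care are that upper semi-continuity by itself yields only the bound $N(g) \le 1$ near $f$, so the matching lower bound $N(g) \ge 1$ must be supplied by the existence statement of Theorem A, and that the openness of $MC$ from Theorem B(1) is what allows one to remain inside the class where both results apply. $\qed$
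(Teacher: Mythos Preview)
Your proposal is correct and is precisely the unpacking of what the paper intends: the paper states Corollary~C as an immediate consequence of Theorem~B (with Theorem~A supplying $N(g)\ge 1$), and your two-step argument---upper semi-continuity plus existence to get openness, then Theorem~B(\ref{itemC}) with $\mathcal{C}=\mathcal{O}$---is exactly that derivation made explicit.
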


As we shall see in Section \ref{staberg}, Corollary C takes a particularily nice form when applied to conservative systems. But first, let us see how simple semi-continuity arguments may be applied to prove great abundance of statistical stability among mostly contracting systems.

\begin{corD}
 Statistical stability is an open and dense property in $MC$.
\end{corD}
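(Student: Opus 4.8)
The plan is to deduce density of statistical stability from Theorem B, parts (2) and (3), via a standard upper-semicontinuity argument (the kind Abdenur is thanked for promoting). Write $N(g)$ for the number of physical measures of $g$ supported in $U$, viewed as a function $MC \to \mathbb{N}$. By Theorem B(1), $MC$ is $C^2$-open, so it makes sense to speak of continuity/semicontinuity at a point of $MC$ relative to the ambient $C^2$ topology. By Theorem B(2), $N$ is upper semi-continuous; being integer-valued and bounded below by $1$ (every $f \in MC$ has at least one physical measure by Theorem A), it attains a \emph{local minimum} on a neighbourhood of any point where it is ``locally minimal'', and such points are exactly the points of \emph{local constancy} of $N$.

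The key steps, in order, are as follows. First I would fix an arbitrary $f \in MC$ and an arbitrary $C^2$-neighbourhood $\mathcal{V}$ of $f$ inside $MC$; the goal is to produce a statistically stable map in $\mathcal{V}$. Second, since $N$ is integer-valued and $\geq 1$ on the (nonempty) set $\mathcal{V}$, choose $g \in \mathcal{V}$ realising the minimum value $m := \min_{h \in \mathcal{V}} N(h)$ (the minimum of a nonempty subset of $\mathbb{N}$). Third, invoke upper semi-continuity of $N$ at $g$: there is a $C^2$-neighbourhood $\mathcal{W} \subset \mathcal{V}$ of $g$ with $N(h) \leq N(g) = m$ for all $h \in \mathcal{W}$; but $N(h) \geq m$ by minimality of $m$ over $\mathcal{V} \supset \mathcal{W}$, so $N \equiv m$ on $\mathcal{W}$. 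Fourth, apply Theorem B(3) with $\mathcal{C} = \mathcal{W}$ (an open subset of $MC$ on which $N$ is constant): maps in $\mathcal{W}$ are statistically stable under perturbations within $\mathcal{W}$, and since $\mathcal{W}$ is $C^2$-open, perturbations within $\mathcal{W}$ means all sufficiently small $C^2$-perturbations, i.e.\ $g$ (indeed every map in $\mathcal{W}$) is statistically stable. Since $g \in \mathcal{W} \subset \mathcal{V}$, this exhibits a statistically stable map arbitrarily close to $f$, proving density; openness of the statistically stable locus is immediate since the neighbourhood $\mathcal{W}$ consists entirely of statistically stable maps.

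I do not anticipate a genuine obstacle here, since all the analytic content has been front-loaded into Theorem B. The one point requiring a little care is the compatibility of the two notions of ``perturbation within $\mathcal{C}$'': Theorem B(3) is stated for an abstract subset $\mathcal{C} \subset MC$, and I want to conclude statistical stability in the unrestricted sense of the Definition preceding Theorem B. This is fine precisely because the set $\mathcal{W}$ produced above is $C^2$-open in $MC$ (hence in $\Dl(M)$, using Theorem B(1)): the functions $\Phi_1,\dots,\Phi_m$ furnished by Theorem B(3) are then defined on a full $C^2$-neighbourhood of $g$, which is exactly what the definition of statistical stability demands. A secondary bookkeeping remark is that one should phrase the minimisation over $\mathcal{V}$ rather than over all of $MC$, so that the minimiser $g$ automatically lies in the prescribed neighbourhood of $f$; this is what upgrades ``there exists a statistically stable map'' to ``statistically stable maps are dense''.
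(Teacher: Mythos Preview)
Your proposal is correct and uses essentially the same idea as the paper's proof: exploit the upper semi-continuity of the integer-valued function $N$ (Theorem~B(2)) to find an open set on which $N$ is constant, then invoke Theorem~B(3). The only difference is packaging: the paper constructs the statistically stable locus globally as $\mathcal{O}=\bigcup_{n\geq 1}\mathcal{O}_n$ with $\mathcal{O}_1=\mathcal{S}_1$ and $\mathcal{O}_{n+1}=\mathcal{S}_{n+1}\setminus\overline{\mathcal{S}}_n$ (where $\mathcal{S}_n=\{N\leq n\}$), whereas you work locally by minimising $N$ over a given neighbourhood $\mathcal{V}$ of $f$ to produce the open set $\mathcal{W}$. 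These are two standard and equivalent ways of saying that an upper semi-continuous $\mathbb{N}$-valued function is locally constant on an open dense set; your remark about why $\mathcal{W}$ being $C^2$-open (via Theorem~B(1)) upgrades ``stability within $\mathcal{C}$'' to unrestricted statistical stability is exactly the point needed, and is implicit in the paper's version as well.
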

\begin{proof}
 For $n\geq 1$, let $\mathcal{S}_n$ be the set of maps in $MC$ 
having at most $n$ physical measures. 
By semi-continuity, each $\mathcal{S}_n$ is open. We define $\mathcal{O}_1 = \mathcal{S}_1$ and $\mathcal{O}_{n+1} = \mathcal{S}_{n+1} \setminus \overline{\mathcal{S}}_n$ for every $n \geq 1$. Then each $\mathcal{O}_n$ is an open set on which the number of physical measures is precisely $n$. Hence every map in $\mathcal{O} = \bigcup_{n\geq1} \mathcal{O}_n$ is statistically stable and, by construction, $\mathcal{O}$ is dense.
\end{proof}

\subsection{Stable ergodicity}\label{staberg}

There is a noteworthy application of Theorem B to the theory of stable ergodicity. We say that a diffeomorphism is conservative if it preserves Lebesgue measure on $M$, and we denote the space of all conservative maps by $\Diff_m^2(M)$. 

\begin{defn}
Let $f \in \Diff_m^2(M)$. We say that $f$ is \emph{stably ergodic} if there exists a $C^2$ neighbourhood $\mathcal{U}$ of $f$  such that Lebesgue measure is ergodic under every $g\in \mathcal{U} \cap \Diff_m^2(M)$.
\end{defn}

Partial hyperbolicity is believed to be a strong mechanism for stable ergodicity. See \cite{PuSh} for details.

\begin{corE}
Any ergodic diffeomorphism in $MC$ is automatically stably ergodic.
\end{corE}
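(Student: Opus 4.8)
The plan is to deduce Corollary E from Corollary C together with the observation that, for a conservative diffeomorphism, ergodicity is equivalent to having exactly one physical measure (namely Lebesgue measure itself). First I would argue that if $f \in MC \cap \Diff_m^2(M)$ is ergodic, then by the Birkhoff ergodic theorem the time averages $\frac1n\sum_{k=0}^{n-1}\delta_{f^k(x)}$ converge weakly to the Lebesgue measure $m$ for $m$-almost every $x\in M$; hence $m$ is a physical measure and $\B(m)$ has full measure. By Theorem A (applied with trapping region $U=M$, which is legitimate since $M$ is always an attractor), $f$ has finitely many physical measures whose basins cover Lebesgue-almost all of $M$; since $\B(m)$ already has full measure, and distinct physical measures have essentially disjoint basins, $m$ must be the \emph{only} physical measure of $f$.

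Next I would invoke Corollary C: the set of maps in $MC$ with precisely one physical measure is $C^2$-open, and on that open set the (single) physical measure varies weakly continuously. In particular there is a $C^2$ neighbourhood $\mathcal{U}$ of $f$ inside $MC$ such that every $g\in\mathcal U$ has exactly one physical measure $\Phi_1(g)$, with $\Phi_1(f)=m$. Now take any $g\in\mathcal U\cap\Diff_m^2(M)$. Since $g$ preserves $m$, the set of $g$-ergodic components gives, via Birkhoff again, physical-type measures; more directly, if $m$ were not ergodic under $g$ it would decompose as a nontrivial convex combination of invariant probabilities, and by the no-holes property of Theorem A almost every point lies in the basin of some physical measure of $g$, forcing $g$ to have at least two distinct physical measures (the ergodic components carrying the physical measures), contradicting $g\in\mathcal U$. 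Hence $m$ is ergodic under every $g\in\mathcal U\cap\Diff_m^2(M)$, which is precisely stable ergodicity of $f$.

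The one point that needs a little care — and which I expect to be the main obstacle — is the precise relationship between "Lebesgue measure is ergodic" and "there is a unique physical measure" in the conservative setting, i.e. ruling out the possibility that a conservative $g$ has a unique physical measure which is \emph{not} $m$, or has $m$ as its unique physical measure while $m$ is still non-ergodic. For the first: since $g$ preserves $m$ and (by Theorem A applied to $g\in MC$) Lebesgue-a.e. point lies in some physical basin, the physical measures' basins partition $M$ up to null sets into $g$-invariant sets; uniqueness of the physical measure forces that partition to be trivial, and then the physical measure must coincide with $m$ on its basin of full measure. For the second: if $m$ is the unique physical measure but fails to be ergodic, pick a $g$-invariant set $E$ with $0<m(E)<1$; restricting and normalising $m$ to $E$ and to $M\setminus E$, Birkhoff shows $m$-a.e.\ point of $E$ has time averages converging to some invariant measure, and a standard ergodic-decomposition argument produces two distinct physical measures, again a contradiction. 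Assembling these observations, ergodicity of $m$ under $g$ is equivalent to $g$ having a unique physical measure (which is then automatically $m$), and Corollary E follows. I would present this equivalence as a short lemma and then the proof of Corollary E becomes two lines.
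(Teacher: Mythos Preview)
Your proposal is correct and is exactly the argument the paper has in mind: Corollary~E is stated without proof, as an evident consequence of Corollary~C (together with Theorem~A), and your write-up supplies precisely the details left implicit---that for a conservative $g\in MC$, ergodicity of $m$ is equivalent to $g$ having a unique physical measure. Your treatment of both directions of this equivalence is sound; for the second, the cleanest phrasing is that if the unique physical measure has basin of full $m$-measure then Birkhoff averages of continuous functions are $m$-a.e.\ constant, which forces $m$ to be ergodic (and hence to coincide with that physical measure).
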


This is not the first time stable ergodicity has been considered fror mostly contracting systems. In \cite{BDP}, the authors give a condition (Theorem 4) of stable ergodicity for mostly contracting systems. The point here is that nothing at all has to be said about the neighbours of $f$, but that ergodicity really is a robust (open) property in $MC \cap \Diff_m^2(M)$. Clearly the same can be said about local diffeomorphisms, although today's research interest in stable ergodicity does not reach outside the world of diffeomorphisms (as far as I know).

\subsection{Some related problems}

 Suppose that $A, B \subset M$ are two Borel subsets, 
each of positive Lebesgue measure $m$. To set some terminology, let us say that $A$ and $B$ \emph{emulsify} if  $ \supp (m_{\vert A}) \cap \supp (m_{\vert B})$
has non-empty interior. Kan's example \cite{K} shows that a mostly contracting system may possess two physical measures with emulsifying basins.

\begin{prob}
Are there robust examples (in $MC$ or elsewhere) of systems having physical measures with basins in emulsion?
\end{prob}

\begin{prob}
Do bifurcations (descontinuities in the number) of physical measures really take place for mostly contracting systems? In particular, the example of Kan as described in \cite{K} is an endomorphism on the cylinder $S^1\times [0,1]$. But it can easily be turned into a local diffeomorphism on the torus $\mathbb{T}^2$ by gluing two copies together. Is it then, one may ask, possible to perform a small $C^2$ perturbation in such a way that the resulting system has only one physical measure?
\end{prob}

Let $X$ be the family of all Borel subsets of $M$ up to equivalence of zero Lebesgue measure: $A \sim B$ iff $m(A \Delta B) = 0$. We endow $X$ with the metric $d$ of symmetric difference on $X$, i.e. $d(A, B) = m(A \Delta B)$ for $A, B \in X$.

\begin{prob}
 Suppose $f \in MC$ is statistically stable. 
Do the basins of its physical measures vary continuously on $f$ in the topology of symmetric difference?
\end{prob}

\subsection{Stochastic stability}

We give only a brief account of noise modelling and stochastic stability of dynamical systems, recomending \cite{Ki} for a more detailed exposition.

Let $f\in \Dl(M)$ and $\{\nu_{\epsilon}\}_{\epsilon>0}$ be a family of probability measures in $\Dl(M)$ supported in $C^2$-balls $B_{\epsilon}(f)$. We think of $f$ as being a model for a scientific phenomena and, for each $\epsilon$, $\nu_{\epsilon}$ is to be thought of as random noise corresponding to external effects not accounted for by the model. The number $\epsilon$ is the \emph{magnitude}, or \emph{level} of the noise.

The family $\{\nu_{\epsilon}\}_{\epsilon>0}$ gives rise to a family $\{\T\}_{\epsilon>0}$ of operators on $\M(M)$, given by
\[\T \mu = \int_{\Dl(M)} f_*\mu \ d\nu_{\epsilon}(f).\]

Since $\nu_{\epsilon}$ is contained in a $C^2$ ball of $f$, it follows that 
\begin{equation}\label{local}
 \supp \T \delta_x \in B_{\epsilon}(f(x)) \quad \forall x \in M.
\end{equation}
We refer to the property (\ref{local}) by saying that the perturbations are \emph{local}. In other words, the random image of any point $x$ is almost surely $\epsilon$ close to the deterministic image $f(x)$.

Another property imposed on the family $\{\nu_{\epsilon}\}_{\epsilon>0}$ so that it provides a realistic model of noise, is that it be \emph{absolutely continuous}:
\begin{equation}\label{abscont}
 \T \delta_x << \text{Leb} \quad \forall x \in M.
\end{equation}

Being $\T$ linear continuous, the Krylov-Bogolyubov argument proves the existence of invariant distributions $\mu_{\epsilon} = \T \mu_{\epsilon}$. The set of invariant distributions is a convex subset of $\M(M)$ and, just like in the deterministic case, we call its extreme points \emph{ergodic}. Such distributions describe random orbits of the system.

A consequence of the local property (\ref{local}) is that, given a family of stationary distributions
$\{ \mu_{\epsilon}\}_{\epsilon>0}$ of the corresponding $\T$, any weak accumulation point $\mu_0$ as $\epsilon \rightarrow 0$ is an $f$-invariant measure. Such measures are called \emph{zero noise limits}. The notion of stochastic stability is based on the idea that zero noise limits should be compatible with physical measures. 

\begin{defn}
Suppose $f \in \Dl(M)$ has some trapping region $U$ in which there exists a finite number of physical, say $\mu_1, \ldots, \mu_N$. We say that $f$ is stochastically stable (really, the pair $(f, U)$), if every zero noise limit $\mu_0$ is a convex combination of physical measures: $\mu_0 = \alpha_1 \mu_1 + \ldots + \alpha_N \mu_N$ for some non-negative $\alpha_1, \ldots, \alpha_N$.
\end{defn}

Traditionally, the notion of stochastic stability of an attractor assumed it to have a unique physical measure. The definition we have given above seems to be the natural generalisation, as no stronger property can be expected to hold in any greater generality. 
See Remark D.6. in \cite{BDV} for a discussion.

\begin{thmF}\label{stochstab}
Every $f$ in $MC$ is stochastically stable.
\end{thmF}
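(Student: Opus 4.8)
The plan is to reduce stochastic stability to the same machinery that drives Theorem A, by treating the random system as a deterministic skew product over the shift and transferring the analysis of admissible measures to that setting. First I would set up the standard ``random dynamical system'' formalism: let $\Omega = \Dl(M)^{\mathbb{N}}$ (or $\mathbb{Z}$) carry the product measure $\nu_{\epsilon}^{\mathbb{N}}$, let $\sigma$ be the shift, and consider the skew product $F_{\epsilon}(\omega, x) = (\sigma\omega, f_{\omega_0}(x))$ on $\Omega \times M$. The crucial observation is that for $\epsilon$ small every $f_{\omega_0}$ lies in a fixed $C^2$ neighbourhood of $f$, hence in $\PH(U,S^u)$ with uniform constants $\tau, n_0$; so the cone field $S^u$ is invariant under all the fibre maps and the notion of a disc $D \subset U$ tangent to $S^u$, together with the class of (multidimensional Tsujii) admissible measures on such discs, makes sense uniformly along random orbits. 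A stationary measure $\mu_{\epsilon}$ lifts to an $F_{\epsilon}$-invariant measure $\hat\mu_{\epsilon}$ on $\Omega \times M$ whose fibre disintegration I would like to show is, after iteration, carried by admissible measures on unstable discs.

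The key steps, in order, would be: (1) show that pushing forward any admissible measure on a disc tangent to $S^u$ under a composition $f_{\omega_{n-1}} \circ \cdots \circ f_{\omega_0}$ and taking Cesàro averages produces measures in a weak-$*$ compact set of ``random admissible measures'', uniformly in $\omega$ and in $\epsilon$ — this is the random analogue of the compactness of admissible measures used for Theorem A, and it rests only on the uniform partial hyperbolicity and bounded distortion along unstable discs, which survive the perturbation; (2) deduce that every stationary measure $\mu_{\epsilon}$ is an average of such random admissible measures, so that the family $\{\mu_{\epsilon}\}_{\epsilon>0}$ is tight with all accumulation points $\mu_0$ being $f$-invariant and themselves expressible as averages of ($\epsilon=0$) admissible measures on discs tangent to $S^u$; (3) invoke the mostly contracting hypothesis exactly as in the proof of Theorem A: on a positive-measure subset of each such disc the central Lyapunov exponent is negative, so the ergodic decomposition of $\mu_0$ along these discs consists of measures lying in the basins of the finitely many physical measures $\mu_1, \ldots, \mu_N$; (4) conclude $\mu_0 = \sum \alpha_i \mu_i$ with $\alpha_i \geq 0$. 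I would borrow the characterisation of mostly contracting maps from Section \ref{character} (referenced but not reproduced here) to make step (3) rigorous.

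I expect the main obstacle to be step (1)–(2): controlling the \emph{fibre disintegration} of a stationary measure and showing it is genuinely carried by admissible measures on unstable discs, rather than merely absolutely continuous along some measurable family of discs. In the deterministic non-invertible case this is already the delicate point — there is no unstable foliation, only the cone field, and different pre-orbits give different intersections $\bigcap_n Df^n S^u_{x_{-n}}$ — and the random setting compounds it because the ``unstable directions'' now depend on the noise sample $\omega$. The absolute continuity hypothesis (\ref{abscont}) on $\T\delta_x$ is what should rescue this: one step of the random dynamics already smooths $\delta_x$ to a density with respect to Lebesgue, so iterating and slicing along the cone field should, via a Fubini/bounded-distortion argument, exhibit the conditionals as admissible measures. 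Making that slicing argument precise, with uniform-in-$\epsilon$ bounds so that the limit $\epsilon \to 0$ inherits the structure, is where the real work lies; the passage from ``$\mu_0$ is an average of admissible measures'' to the conclusion is then essentially a replay of Theorem A.
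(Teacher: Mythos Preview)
Your plan is workable but considerably heavier than what the paper actually does, and the obstacle you anticipate is one the paper simply sidesteps. The paper never introduces the skew product $F_\epsilon$ or worries about fibrewise disintegrations. Instead it observes that the absolute continuity hypothesis~(\ref{abscont}) immediately forces every stationary distribution $\mu_\epsilon$ to satisfy $\mu_\epsilon \ll m$ (one application of $\T$ already smooths any measure to something absolutely continuous, and $\mu_\epsilon = \T\mu_\epsilon$). Hence $\mu_\epsilon$ can be \emph{strongly} approximated by an admissible measure $\mu$ with a lift supported in $\A_1$. The disintegration machinery of Section~\ref{technique} is then applied to random compositions $\ff_n = f_{n-1}\circ\cdots\circ f_0$ in place of $f^n$ (this is your step~(1), and it goes through verbatim since all the estimates are uniform on a $C^2$-neighbourhood of $f$), yielding a random lift operator $\Xi^{\text{rand}}_{(\nu_\epsilon,n,a)}$ with $\uiota\circ\Xi^{\text{rand}}_{(\nu_\epsilon,n,a)} = \T^n\circ\uiota$. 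It follows exactly as in Proposition~\ref{likePeSi} that Ces\`aro averages $\tfrac1n\sum_{k<n}\T^k\mu$ accumulate in the compact set $\AM(a,C)$. Since $\T$ is a strong contraction and $\|\mu-\mu_\epsilon\|_s$ can be made arbitrarily small, compactness gives $\mu_\epsilon\in\AM(a,C)$ for every $\epsilon>0$; one more appeal to compactness shows every zero-noise limit $\mu_0$ is admissible. Finally, Proposition~\ref{ergdecomp} decomposes $\mu_0$ into ergodic admissible measures, which by Theorem~A are exactly the physical measures.

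The difference, then, is that you try to exhibit admissible structure \emph{inside} the stationary measure via its sample-wise disintegration, whereas the paper shows $\mu_\epsilon$ itself lies in the (deterministic, $\epsilon$-independent) compact set $\AM(a,C)$. Your route would require controlling the $\omega$-dependent ``unstable'' directions and their conditionals, which --- as you correctly note --- is delicate; the paper's route needs only that $\AM$ is $\T$-invariant and that $\mu_\epsilon$ is strongly approximable by elements of $\AM$, both of which are immediate from the tools already built for Theorem~A. In short: drop the skew product, use $\mu_\epsilon\ll m$ directly, and the argument collapses to a near-repeat of Proposition~\ref{likePeSi}.
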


We remark that the apparent discrepancy between statistical and stochastic stability, revealed by comparing  Corollary D with Theorem F, is not of a profound nature. It merely reflects the strong definition of statistical stability considered. Should one have settled with the weaker form of statistical stability suggested in \cite{V}, one would obtain (quite trivially) that \emph{all} mostly contracting systems are statistically stable --- not only an open and dense set.

\section{Toolbox}\label{toolbox}

Whenever dealing with a normed vector space, $(V, \| \cdot \|)$ say, then $V(r)$ denotes the ball 
$V(r) = \{v\in V: \|v\| < r\}$ of radius $r$ centred at the origin. 

Given any submanifold $N \subset M$, we shall denote by $d^N(x, y)$ the intrinsic distance of points $x, y\in N$ defined as the infimum of arclengths of all smooth curves joining $x$ and $y$ \emph{inside} $N$. Similarily, for $x\in N$, $B_r^N(x)$ denotes the intrinsic ball $\{y\in N: d^N(x, y)<r\}$.

If we are dealing with a topological space, $X$ say, we may form the space $\M(X)$ of Borel probability measures on $X$. The space $\M(X)$ is always considered with the weak topology, in which convergence $\mu_n \rightarrow \mu$ is characterised by requiring that $\int \varphi d\mu_n \rightarrow \int \varphi \mu$ for every bounded continuous $\varphi: X \rightarrow \mathbb{R}$. If $K \subset X$ is a subset (compact or not), we sometimes use the notation $\M(K)$ to mean $\{\mu \in \M(X): \mu(K) = 1 \}$.

\subsection{Integral representation of measures}

We are going to integrate measure valued functions on many occasions. The following situation is then always understood: There are two Hausdorff spaces $X$ and 
$Y$, with $Y$ compact, and their associated spaces of Borel probability measures $\M(X),\ \M(Y)$ endowed with the weak topology. Thus $\M(X) \subset C_b^0(M)^*$ and $\M(Y) \subset C^0(Y)^*$, where $C_b^0(X)$ is the set of bounded continuous functions $X \rightarrow \mathbb{R}$. Suppose we are given some Borel probability $\mu \in \M(X)$ and a continuous map $\vartheta: X \rightarrow \M(Y)$. We define the measure 
$\int \negmedspace \vartheta d\mu \in \M(Y)$ by requiring
\begin{equation*}
\int \varphi \ d(\! \textstyle\int \negmedspace \displaystyle \vartheta d\mu) = \int\left( \int \varphi \ d\vartheta(x) \right) d\mu(x) 
\end{equation*}
for every continuous $\varphi:Y \rightarrow \mathbb{R}$.

Alternatively, given any Borel set $E\subset Y$, we have 
\[\textstyle \int \negmedspace \vartheta d\mu(E) = \displaystyle \int \vartheta(x)(E)d\mu(x).\]
Measurability of the map $x \mapsto \vartheta(x)(E)$ is established by dominated pointwise approximation of $\chi_E$ (the indicator function of $E$) by continuous functions.

In the language of convex analysis one would say that $\int \negmedspace \vartheta d\mu$ is the \emph{barycentre} of $\vartheta_* \mu$, or that $\vartheta_* \mu$ \emph{represents} 
$\int \negmedspace \vartheta d\mu$.

\begin{prop}\label{cont}
The mapping $\mu \mapsto \int \negmedspace \vartheta d\mu$ is continuous.
\end{prop}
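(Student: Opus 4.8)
The plan is to unwind the definitions and reduce the continuity of $\mu \mapsto \int \vartheta\, d\mu$ to the very definition of the weak topology on $\M(Y)$. Fix a net (or sequence, if one prefers to work with sequences and assumes enough separability) $\mu_n \to \mu$ in $\M(X)$. We must show $\int \vartheta\, d\mu_n \to \int \vartheta\, d\mu$ in $\M(Y)$, i.e. that for every continuous $\varphi : Y \to \mathbb{R}$ we have $\int \varphi\, d(\int \vartheta\, d\mu_n) \to \int \varphi\, d(\int \vartheta\, d\mu)$. By the defining property of $\int \vartheta\, d\mu$, the left-hand side equals $\int \Psi_\varphi\, d\mu_n$ and the right-hand side equals $\int \Psi_\varphi\, d\mu$, where
\[
\Psi_\varphi : X \to \mathbb{R}, \qquad \Psi_\varphi(x) = \int \varphi\, d\vartheta(x).
\]
So everything comes down to showing that $\Psi_\varphi$ is a bounded continuous function on $X$; once that is established, $\int \Psi_\varphi\, d\mu_n \to \int \Psi_\varphi\, d\mu$ is immediate from $\mu_n \to \mu$ weakly in $\M(X)$.

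First I would check boundedness: since $Y$ is compact, $\varphi$ is bounded, say $\|\varphi\|_\infty \le C$, and then $|\Psi_\varphi(x)| = |\int \varphi\, d\vartheta(x)| \le C$ for every $x$, because $\vartheta(x)$ is a probability measure. Next, continuity of $\Psi_\varphi$: this is exactly the statement that the evaluation functional $\nu \mapsto \int \varphi\, d\nu$ is continuous on $\M(Y)$ with the weak topology — which is true by definition of that topology, since $\varphi \in C^0(Y)$ — composed with the continuous map $\vartheta : X \to \M(Y)$, which is continuous by hypothesis. Hence $\Psi_\varphi = (\nu \mapsto \int \varphi\, d\nu) \circ \vartheta$ is a composition of continuous maps, therefore continuous. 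That closes the argument.

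There is no serious obstacle here; the proposition is essentially a bookkeeping lemma. The only point requiring a moment's care is making sure one is allowed to invoke the defining identity $\int \varphi\, d(\int \vartheta\, d\mu) = \int \Psi_\varphi\, d\mu$ for the particular $\varphi$ at hand — but this holds for \emph{every} continuous $\varphi : Y \to \mathbb{R}$ by construction of $\int \vartheta\, d\mu$, so there is nothing to verify. If one wants to be scrupulous about working with nets rather than sequences (since $\M(X)$ need not be metrizable when $X$ is merely a Hausdorff space, e.g. not second countable), the same proof goes through verbatim with nets, as weak convergence and continuity are both characterized by nets. I would present the proof in three short lines: reduce to continuity of $\Psi_\varphi$, observe $\Psi_\varphi = \mathrm{ev}_\varphi \circ \vartheta$ is continuous and bounded, and conclude.
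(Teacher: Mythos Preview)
Your proof is correct and is essentially identical to the paper's own argument: the paper defines $\tilde{\varphi}(x) = \int \varphi\, d\vartheta(x)$, notes it is bounded continuous by continuity of $\vartheta$, and concludes from $\int \varphi\, d(\int \vartheta\, d\mu) = \int \tilde{\varphi}\, d\mu$. Your $\Psi_\varphi$ is the paper's $\tilde{\varphi}$, and your extra care about nets and the explicit bound via $\|\varphi\|_\infty$ only makes the same reasoning more explicit.
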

\begin{proof}
 Take any continuous $\varphi:Y\rightarrow \mathbb{R}$. Continuity of $\vartheta$ means that 
$x \mapsto \int \varphi d\vartheta(x)$ is a bounded continuous function $X \rightarrow \mathbb{R}$. Call it $\tilde{\varphi}$. Then  $\int \varphi \ d(\int \negmedspace \vartheta d\mu) = \int \tilde{\varphi}d\mu$ by definition, so $\int \negmedspace \vartheta d\mu$ depends indeed continuously on $\mu$.
\end{proof}

\subsection{Admissible measures and carriers}

This section introduces the notion of admissible measures, the most important tool in this paper, used in the proof of all theorems. They should be thought of as non-invertible analogues of Gibbs-$u$ states (see \cite{PeSi} for definitions). Due to the non-invertibility of local diffeomorphisms, systems in $\PH$ do not have unstable foliations. Still, there is an invariant family of manifolds tangent to the unstable cone field. Tsujii \cite{T} defined admissible measures for partially hyperbolic maps with a $1$-dimensional unstable direction. They are smooth measures on an invariant family of unstable curves or, more generally, convex combinations of such. Great care has to be taken when extending his notion  to arbitrary dimension, due to the higher geometrical complexity.

\subsubsection{Admissible manifolds}

We follow the approach in \cite{ABV} for defining an invariant family of manifolds of bounded curvature.

A $C^1$ embedded $u$-dimensional submanifold $N \subset M$ is said to be tangent to $S^u$ if $T_x N \subset S_x^u$ for every $x \in N$. Further, we say that the tangent bundle of $N$ is Lipschitz continuous if 
$N \ni x \mapsto T_x N \subset G^u M$ is a Lipschitz continuous section of the Grassmannian bundle (see Section \ref{Grass}). The Lipschitz variation may be quantified by considering the variation of $T_x N$ in exponential charts. More precisely, we choose some small $\delta$ so that, at every $x \in M$,  the exponential map $\exp_x: T_x M(\delta) \rightarrow M$ is a diffeomorphism; and denote by $\tilde{N}_x$ the preimage of $N$ under $\exp_x$. Each point $y \in B_{\delta}(x)$ corresponds to a point $\exp_x^{-1}(y)$ in $T_x M (\delta)$ which we denote by $\tilde{y}$. In particular, $\tilde{x}$ is the zero element in $T_x M$. 

For every $\tilde{y} \in \tilde{N}_x$, there is a unique map $A_x(y) : T_x N \rightarrow E_x^c$ whose graph is parallel to $T_{\tilde{y}} \tilde{N}_x$. 
We say that the tangent bundle of $N$ is $K$-Lipschitz continuous at $x\in N$ if $\|A_x(y)\| \leq K d^N(x, y)$ for every $\tilde{y} \in \tilde{N}_x$. 
Furthermore, the tangent bundle of $N$ is $K$-Lipschitz if it is $K$-Lipschitz at every $x$.

\begin{prop}\label{curvature}
Let $f$ be partially hyperbolic. There exists a neighbourhood $\mathcal{U}$ of $f$ and $K_0>0$ such that for any $g$ in $\mathcal{U}$, and any
$C^1$ embedded disc $N$ tangent to $S^u$ with $K_0$-Lipschitz 
tangent bundle, the tangent bundle of $g^n(N)$ has Lipshitz constant smaller than $K_0$ for every $n>n_0$.
\end{prop}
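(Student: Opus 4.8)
The plan is to establish a uniform cone-like estimate for the way iteration acts on the ``slope map'' $A_x$, exploiting domination. Fix a $C^1$ disc $N$ tangent to $S^u$ with $K_0$-Lipschitz tangent bundle, and let $x\in g^n(N)$, with preimage $x_0\in N$ under $g^n$; let $y\in g^n(N)$ be another point with preimage $y_0\in N$. The central object is to control $A_x(y)$, the linear map $T_xg^n(N)\to E^c_x$ whose graph gives $T_{\tilde y}\widetilde{g^n(N)}$ in the exponential chart at $x$. The first step is a chain-rule computation in charts: writing $g^n$ in exponential coordinates around $x_0$ and $x$, the derivative $Dg^n$ maps the graph of $A_{x_0}(y_0)$ to (approximately) the graph of $A_x(y)$, and this transformation is a graph transform of the usual kind. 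Because $E^u$ dominates $E^c$ with rate $\tau$ (inequality (\ref{domination})), the graph transform contracts slopes by a factor comparable to $\tau^{n-n_0}$, up to an error term coming from the second-order behaviour of $g^n$ (i.e. the curvature of $M$ and the non-linearity of $g$), which is itself $C^2$-bounded on the neighbourhood $\mathcal U$.

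Second, I would quantify the distortion of intrinsic distances: since $E^u$ is uniformly expanded (inequality (\ref{expansion})) and $Dg$ is $C^1$-close to $Df$ on $\mathcal U$, there is a uniform bound $d^{g^n(N)}(x,y)\geq C\,\tau^{-(n-n_0)}\,d^N(x_0,y_0)$ for nearby points, or more precisely a bounded-distortion statement along the expanding direction valid on scales below some fixed $\delta$. Combining this with the slope-contraction estimate from the first step gives, schematically,
\[
\|A_x(y)\| \;\leq\; \tau^{n-n_0}\|A_{x_0}(y_0)\| \;+\; C'\,d^{g^n(N)}(x,y),
\]
where the first term on the right is $\leq \tau^{n-n_0}K_0\,d^N(x_0,y_0)\leq \tau^{n-n_0}K_0\,C^{-1}\tau^{n-n_0}d^{g^n(N)}(x,y)$, which decays, and $C'$ depends only on the $C^2$-size of $g$ and the geometry of $M$, not on $N$. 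Choosing $K_0$ a priori larger than, say, $2C'$ and then $n_0$ large enough that $\tau^{n-n_0}K_0 C^{-1}\tau^{n-n_0}\leq C'$ for $n>n_0$ makes the right-hand side at most $2C'\,d^{g^n(N)}(x,y)\leq K_0\,d^{g^n(N)}(x,y)$, which is exactly the desired conclusion, uniformly over $g\in\mathcal U$. One should be slightly careful that the estimate is local (valid for $y$ within distance $\delta$ of $x$ in $g^n(N)$), but that is all the definition of $K$-Lipschitz tangent bundle requires, since $A_x(y)$ is only defined for $\tilde y\in\widetilde{g^n(N)}_x\subset T_xM(\delta)$.

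The main obstacle I anticipate is the first step: making the graph-transform estimate genuinely uniform and keeping explicit track of how the non-linearity (second derivatives of $g$, curvature of $M$) enters, so that the error constant $C'$ truly does not depend on $N$ or on the point. In higher unstable dimension $u>1$ this is more delicate than in Tsujii's $1$-dimensional setting, because ``slope'' is now an operator norm and one must argue with the full graph-transform contraction rather than a scalar multiplication; one must also confirm that $g^n(N)$ remains an embedded disc tangent to $S^u$ for all $n$ (strict invariance of the cone field $S^u$ handles tangency, and expansion plus the a priori curvature bound keeps it embedded on the relevant scale). The remaining ingredients — bounded distortion of $d^N$ under the expanding iteration, and the $C^2$-openness of the hypotheses — are routine consequences of (\ref{expansion}), (\ref{domination}) and compactness of $M$, and I would invoke them without belabouring the computations.
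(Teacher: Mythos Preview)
Your approach is the same as the paper's --- graph transform under domination plus expansion of intrinsic distances --- but two points deserve correction.

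First, the paper handles the nonlinearity more cleanly than your sketch. Rather than writing the error as $C'\,d^{g^n(N)}(x,y)$ in the \emph{target}, it pulls back the constant field $T_{f^n(x)}f^n(N)$ through $D\tilde f^n$ to obtain a reference field $Q(\tilde y)$ in the \emph{source} chart, with graph map $B_x(y)$ satisfying $\|B_x(y)\|\le C_0\,d(x,y)$ for a constant $C_0$ depending only on the $C^2$ size of $g$. One then writes $T_{\tilde y}\tilde N_x$ as a graph over $Q(\tilde y)$ via $\tilde A_x(y)=A_x(y)-B_x(y)$, and the graph transform by $D\tilde f^n(\tilde y)$ sends this \emph{exactly} to $A_{f^n(x)}(f^n(y))$ as a graph over $T_{f^n(x)}f^n(N)$. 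Domination then gives $\|A_{f^n(x)}(f^n(y))\|\le \tau^{n-n_0}\bigl(\|A_x(y)\|+C_0\,d(x,y)\bigr)$, and the distance conversion contributes a further factor $\tau^{n-n_0}$, yielding the single clean bound $(\tau^{n-n_0})^2(K+C_0)\,d^{f^n(N)}(f^n(x),f^n(y))$. In your formulation, the assertion that $C'$ is uniform in $n$ is the nontrivial point --- it is true, but it is exactly what the pulled-back field argument (or an inductive step-by-step summation $\sum_k \tau^{2(n-k)}<\infty$) establishes; merely citing ``$C^2$-boundedness of $g$'' does not by itself control the accumulated nonlinearity of $g^n$.

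Second, a slip at the end: $n_0$ is fixed by the partial hyperbolicity hypothesis and is not yours to enlarge. The correct closing move is the paper's: choose $K_0$ so that $(\tau^{n-n_0})^2(K_0+C_0)\le K_0$ for all $n>n_0$, which amounts to $K_0\ge C_0\,\tau^2/(1-\tau^2)$. With this fix your argument goes through.
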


\begin{proof}
 Fix some $x \in N$ and let $\tilde{f}^n$ be the map from a neighbourhood 
$\tilde{U}_x$ of the origin in 
$T_x M$ to a neighbourhood $\tilde{U}_{f^n(x)}$ of the origin in $T_{f^n(x)}$, given by
\[\tilde{f}^n = \exp_{f^n(x)}^{-1} \circ f \circ \exp_x  .\]
We identify $T\tilde{U}_x$ with $T_x M$ (and likewise $T\tilde{U}_{f^n(x)}$ with $T_{f^n(x)}M$) by translation. Let $P$ be the constant field in $\tilde{U}_{f^n(x)}$ associating to each 
$z\in \tilde{U}_{f^n(x)}$ the subspace $T_{f^n(x)}f^n(N)$. We pull-back $P$ through $\tilde{f}^n$ to obtain another field $Q$ in $\tilde{U}_x$. Thus
\[D\tilde{f}^n(\tilde{y}) Q(\tilde{y}) = T_{f^n(x)}f^n(N) \quad \forall \tilde{y} \in \tilde{U}_x.\]
To each $\tilde{y}\in \tilde{U}_x$ is associated a unique linear map $B_x(y): T_x N \rightarrow E_x^c$ such that $Q(\tilde{y})$ is the graph of $B_x(y)$. Since $f$ is $C^2$, there is some $C_0>0$, uniform in some neighbourhood $\mathcal{U}$ of $f$, such that 
\[\|B_x(y)\| \leq C_0 d(x, y).\]

Suppose the tangent bundle of $N$ is $K$-Lipschitz for some $K$. That is, $T_y\tilde{N}_x$ is the graph of a uniquely defined linear map $A_x(y):T_x N \rightarrow E_x^c$, satisfying
\[\|A_x(y)\|\leq K d^N(x, y).\]
Therefore it is also the graph of the map $\tilde{A}_x(y): Q(y)\rightarrow E_x^c$ given by
\[\tilde{A}_x(y) = A_x(y) - B_x(y).\]

We wish to estimate the norm of $A_{f^n(x)}(f^n(y))$, i.e. the linear map from $T_{f^n(x)}f^n(N)$ to 
$E_{f^n(x)}^c$ whose graph coincides with $T_{\tilde{f}^n(\tilde{y})}\tilde{N}_{f^n(x)}$. Note that
\[A_{f^n(x)}(\tilde{f}^n(\tilde{y})) = D\tilde{f}^n(\tilde{y})_{\vert E_x^c} \tilde{A}_x(y)(D\tilde{f}^n(\tilde{y})_{\vert T_x N})^{-1}\]
so it follows from (\ref{domination}) that 
\begin{align*}
\|A_{f^n(x)}(\tilde{f}^n(y))\| &\leq \tau^{n-n_0} \|A_x(y) - B_x(y)\| \\
&\leq \tau^{n-n_0}\left( K d^N(x,y) + C_0 d(x, y)\right) \\
&\leq \tau^{n-n_0}(K+C_0) d^N(x, y) \\
&\leq (\tau^{n-n_0})^2(K+C_0) d^{f^n(N)}(f^n(x), f^n(y)).
\end{align*}
The proposition follows by taking $K_0 > C_0 \frac{(\tau^{n-n_0})^2}{1-(\tau^{n-n_0})^2}$.
\end{proof}

We fix a value of $K_0$ once and for all as in Proposition \ref{curvature}. 

\begin{defn}
We say that a $u$-dimensional $C^1$ embedded manifold is admissible if it is tangent to $S^u$,  has a $K_0$-Lipschitz tangent bundle, or is the iterate of such under $f^k$, $k=1, \ldots, n_0$.
\end{defn}

By Proposition \ref{curvature}, the set of admissible manifolds is invariant under iterates of $f$. Actually, there is some $C^2$ neighbourhood $\mathcal{U}$ of $f$ such that the set of admissible manifolds is invariant under every $g \in \mathcal{U}$. This 'rigidity' property will become important in the study of small perturbations of $f$, bot of random and deterministic type. Let $m_N$ be Lebesgue measure on some admissible manifold. One may wonder what the possible weak accumulation points of the sequence 
$\frac{1}{n} \sum_{k=0}^{n-1} f_*^k m_N$ are. This is where admissible measures enter the scene. They are convex combinations of smooth measures on admissible manifolds. However, it is not practical to work with the space of all admissible manifolds, but only consider a very particular kind. These will be called carriers, because their lot in life is to `carry' admissible measures. 

\subsubsection{The Grassmannian bundle} \label{Grass}

Recall that the $u$-dimensional Grassmannian manifold over a vector space $V$ is the set $G^u(V)$ of $u$-dimensional subspaces of $V$. It can be turned into a compact smooth $u(n-u)$-dimensional manifold by modelling it over the space $L(\mathbb{R}^u, \mathbb{R}^{n-u})$ of linear maps from $\mathbb{R}^u$ to $\mathbb{R}^{n-u}$. Namely, if $H \in G^u(V)$, then 
\begin{equation}
 L(\mathbb{R}^u, \mathbb{R}^{n-u}) 
\simeq L(H, H^{\perp}) \ni A \mapsto \Graph(A) 
\in G^u(V)
\end{equation}
defines (the inverse of) a local chart of $G^u(V)$ around $H$; here $H^{\perp}$ is any subspace of $V$, complementary to $H$. Let $G^u M = \bigcup_{x \in M} G^u(T_x M)$. It may be considered as a bundle over $M$. 
Indeed, let $p:G^u M \rightarrow M$ be the natural projection and $(U_0, \varphi_0)$ some chart on $M$.
We define a bundle chart $\underline{\varphi}_0: p^{-1}(U_0) \rightarrow U_0 \times G^u(\mathbb{R}^u)$ by 
$\underline{\varphi}_0(x, h) = (x, D\varphi_0(x)h)$. The topology given on $G^u M$ is then locally the product topology of $U_0 \times G^u(\mathbb{R}^u)$ induced by $\underline{\varphi}_0$.
In this way $G^u M$ becomes a compact manifold and the unstable conefield $S^u$ is a closed subset. 
We fix a number $r_0$, small enough for the exponential map to be a diffeomorphism on $r_0$-balls in $T_x M$ at every $x \in M$. We will impose further conditions on the value of $r_0$ later on.

\subsubsection{Carriers}

Having understood the notion of admissible manifolds and Grassmannian bundle, the time is now ripe for making the notion of a carrier precise.

\begin{defn}
 A carrier is a quadruple $\Gamma = (r, x, h, \psi)$, where
\begin{itemize}
 \item $r<r_0$ is a positive real number (called the radius of $\Gamma$)
\item $x$ a point in the trapping region $U$ (called the centre of $\Gamma$)
\item $h \subset S_x^u$ is a $u$-dimensional subspace of $T_x M$ (called the direction of $\Gamma$)
\item $\psi: h(r) \rightarrow E_x^c$ is a $C^1$ map such that 
\begin{enumerate}
 \item $\psi(0)=0$,
\item $D \psi(0) = 0$
\item $\exp_x \Graph (\psi)$ is an admissible manifold.
\end{enumerate}
\end{itemize}
\end{defn}

Recall the notation introduced in the introduction of Section \ref{toolbox}: $h(r)$ is the ball $\{v \in h: \|v\| < r \}$. Provided that the number $r_0$ is small, the manifold $\exp_x \Graph (\psi)$ may be thought of as an almost round $u$-dimensional disc of radius $r$, centred at $x$ and tangent to $h$ at $x$. The jargon we will adopt is that we identify $\Gamma$ with $\exp_x \Graph(\psi)$. So a carrier $\Gamma$ is in fact to be thought of as a special kind of admissible manifold, the quadruple $(r, x, h, \psi)$ being its coordinates. It should be clear that if $r_0$ is sufficiently small, so that the carriers are flat enough, there is only one possible centre and, consequently, only one coordinate description of a given `carrier-manifold'. 

The space of all carriers will be denoted by $\K$ and divided into strata $\K(a)$, consisting of carriers with radius $a$. It will be given a topology in section \ref{topology}, turning it into a separable metrizable space with each stratum $\K(a)$ being a compact subset.

\subsubsection{Simple admissible measures}

Consider some carrier $\Gamma$ with coordinates $(x, r, h, \psi)$. Let $\omega$ denote the volume form on $M$ derived from the Riemannian metric. Then, letting $i_{\Gamma}:\Gamma \rightarrow M$ denote inclusion, we obtain an induced volume form $\omega_{\Gamma}:= i_{\Gamma}^*\omega$ on $\Gamma$. We denote by $\vert \Gamma \vert$ the total mass 
$\int_{\Gamma} \omega_{\Gamma}$ of $\Gamma$ and write $(\Gamma, 1)$ for the normalised volume on $\Gamma$: 
\begin{equation}\label{Lebesgue on Gamma}
(\Gamma, 1)(E) = \frac{1}{\vert \Gamma \vert} \int_{E\cap \Gamma} \omega_{\Gamma} 
\end{equation}
for every measurable $E\subset M$. Thus $\{(\Gamma, 1): \Gamma \in \K\}$ is the family of normalised Lebesgue measure on carriers. We wish to enlarge this family by considering absolutely continuous measures with bounded densities. Suppose $\phi: \Gamma \rightarrow \mathbb{R}$ is a non-negative integrable (density) function. Then we may define the measure $(\Gamma, \phi)$ by
\begin{equation}\label{gammaphi}
(\Gamma, \phi)(E) = \frac{1}{\vert \Gamma \vert} \int_{E\cap\Gamma} \phi \ \omega_{\Gamma}
\end{equation}
on Borel sets $E\subset M$. The notation $(\Gamma, 1)$ for the measure (\ref{Lebesgue on Gamma}) should now be transparent.

\begin{defn}
 A simple admissible measure is a quintuple $(r, x, h, \psi, \phi)$ such that $\Gamma = (r, x, h, \psi)$ 
is a carrier and $\phi: \Gamma \rightarrow \mathbb{R}$ is a Borel function satisfying 
\begin{itemize}
\item
$\frac{1}{\vert \Gamma \vert} \int_{\Gamma} \phi \ \omega_{\Gamma} = 1$ 
\item $\log \phi$ is bounded. 
\end{itemize}
\end{defn}

We seldom refer to a simple admissible measure explicitly as a quintuple, but more frequently as a pair $(\Gamma, \phi)$. It is then understood that $\Gamma$ is a carrier, say $\Gamma = (r, x, h, \psi)$, and $(\Gamma, \phi)$ should then be interpreted as $(r, x, h, \psi, \phi)$. Just like a carrier, a simple admissible measure also has a radius, a centre and a direction, given in the obvious way. By now, it should not come as a surprise that we identify a simple admissible measure $(\Gamma, \phi)$ with the measure 
\[ E \mapsto \frac{1}{\vert \Gamma \vert} \int_{\Gamma \cap E} \phi \ \omega_{\Gamma}\]
 ($E \subset M$ is any Borel set).

The set of all simple admissible measures is denoted by $\A$. It can harmlessly be thought of as a subset of $\M(M)$. It also splits into strata $\A(a)$, consisting of simple admissible measures of radius $a$. Furthermore, each strata is the union of a nested family of sets 
\[\A(a, C) = \{(r, x, h, \psi, \phi) \in \A: r=a \text{ and } 
C^{-1} \leq \phi \leq C \}\]
of decreasing level of regularity. We are going to proove that, seen as a subset of $\M(M)$, each $\A(a,C)$ is compact. The proceedure is rather prolix: we define a topology on $\A$ using a nested fibre construction; then prove that thus endowed, each $\A(a,C)$ is compact. Finally we observe that the inclusion $\A(a, C) \rightarrow \M(M)$ is continuous.

\subsubsection{An interlude into the heuristics of admissible measures}\label{interlude}
Let us pause for a moment to take a peep on what is to come. It is clear that $\K$ is not an $f$-invariant family. Indeed, an iterate $f^n(\Gamma_0)$ of a carrier $\Gamma_0$ is generally some large unshapely immersed disc that may intersect itself  and is quite far from being round. For the same reason, $\A$ cannot be invariant under $f_*$. Still, it is quite clear that $f^n(\Gamma_0)$ is a union of carriers, although obviously not a disjoint one. But there is some hope that $f_*^n (\Gamma_0, 1)$ has an integral representation on simple admissible measures:
\begin{equation} \label{representation}
f_*^n (\Gamma_0, 1) = \int_{\A} (\Gamma, \phi)  \ d\! \umu(\Gamma, \phi) 
\end{equation}
where $\umu$ is some measure on $\A$. And so it is indeed. But since $f^n(\Gamma)$ is \emph{not} a disjoint union of carriers, the measure $\umu$ cannot be atomic. This corresponds to the fact that one cannot cut a large disc (of dimension at least $2$) into a number of smaller ones. (The remaining objects would not look like round discs, but bear more resemblance to half moons or, even worse, splinters of broken porcelain.) The measure $\umu$, at least in the way we will construct it in section \ref{technique}, is not supported on a single strata $\A(a)$. However, provided that $n$ is large, $\umu$ will give weight nearly $1$ to some specified strata $\A(a)$. This allows us to prove that every accumulation point of $\frac{1}{n} \sum_{k=0}^{n-1} f_*^k (\Gamma, 1)$ has an integral representation of the form (\ref{representation}), and with $\umu$ supported on some $\A(a)$ --- a fact of great importance for the proofs of all results in this work.

\subsubsection{Topology on $\A$ and $\K$}\label{topology}

We use bundel constructions to topologise $\A$ and $\K$. The idea is that, locally, $\K$ should look like a subset of the product space
\[ \mathbb{R} \times M \times G^u(\mathbb{R}^n) \times C_b^1(\mathbb{D}^u, \mathbb{R}^{n-u}), \]
$C_b^1(\mathbb{D}^u, \mathbb{R}^{n-u})$ being the space of bounded $C^1$ maps from the unit $u
$-dimensional disc $\mathbb{D}^u$ to $\mathbb{R}^{n-u}$ and whose derivatives are also bounded. It is considered with the usual $C^1$ topology.
  
The topology of $\A$ is to take (locally) the form of  
\[\K \times L_w^2(\mathbb{D}^u). \]
Here $L_w^2(\mathbb{D}^u)$ is the space of square integrable Borel functions $:\mathbb{D}^u \rightarrow \mathbb{R}$ endowed with the weak topology, in which convergence $\phi_n \rightarrow \phi$ is characterised by requiring that $\int \phi_n \psi d(\Gamma, 1) \rightarrow \int \phi \psi d(\Gamma, 1)$ for every $\psi \in L_w^2(\Gamma)$. 

To carry out the construction explicitly, let $I$ be the interval $(0, r_0)$ and consider the sets 
\begin{align*}
\tilde{\K} &= \bigcup_{r \in I} \quad \bigcup_{x \in M}\quad  \bigcup_{h \in T_x M}
\ \bigcup_{\psi \in C_b^1(h(r), E_x^c)} (r, x, h, \psi), \\
 \tilde{\A} &= \bigcup_{\Gamma \in \tilde{\K}} L_w^2(\Gamma).
\end{align*}
The difference between $\tilde{\K}$ and $\K$ is that for a quintuple $(r, x, h, \psi)$ to belong to $\tilde{\K}$ it does not have to satisfy items (1)-(3) in the definition of carriers. We shall define topologies on $\tilde{\K}$ and $\tilde{\A}$ and consider $\K$ and $\A$ as subsets. 

Naturally, we give 
\[ \bigcup_{r \in I} \ \bigcup_{x \in M}\  \bigcup_{h \in T_x M} (r, x, h) \]
the topology of $I \times G^u M$. Thus we write
\[\tilde{\K} = \bigcup_{(r, x, h) \in I \times G^u M} C_b^1(h(r), E_x^c), \]
and intend to consider $\tilde{K}$ as a vector bundle over $I \times G^u M$.To define the bundle charts, fix $(r_0, x_0, h_0) \in I \times G^u M$ and take some local chart $(V_0, \varphi_0)$ of $M$ around $x_0$. Let $\underline{p}$ be the canonical projection $\tilde{\K} \rightarrow I \times G^u M$ taking $(r, x, h, \psi)$ into $(r, x, h)$. Write 
\begin{align}\nonumber
 & H_0  = D\varphi(x_0) h_0, \\
 & D_0  = D \varphi(x_0) h_0(r_0), \label{D0}\\
 & E_0  = D\varphi(x_0) E_{x_0}^c, \nonumber
\end{align}
so that $D_0$ is a $u$-dimensional disc (ellipsoid) in $\mathbb{R}^n$. Each fibre $C_b^1(h(r), E_x^c)$ can be modelled over $C_b^1(D_0, E_0)$. To this end we must define a map $\Psi$ from $\underline{p}^{-1}(I \times p^{-1}(V_0))$ to 
$I\times p^{-1}(V_0) \times C^1(D_0, H_0)$ such that
\[
\xymatrix{
\tilde{\K}  \supset   \underline{p}^{-1}(I \times p^{-1}(V_0)) \ar[d]_{\underline{p}} \ar[r]^{\Psi} & I \times p^{-1}(V_0) \times C_b^1(D_0, E_0) \ar[dl]^{\pi} \\ 
                           I \times p^{-1}(V_0)       }
\]
commutes. Thus $\Psi(r, x, h, \psi)$ should take the form $(r, x, h, \Psi_{(r, x, h)} \psi)$ for some continuous linear map $\Psi_{(r, x, h)} : C_b^1(h(r), E_x^c) \rightarrow C_b^1(D_0, E_0)$. Then we take neighbourhoods of $(r_0, x_0, h_0, \psi_0)$ in $\tilde{\K}$ to be just the preimages, under $\Psi$, of neighbourhoods of $(r_0, x_0, h_0, \Psi_{(r_0, x_0, h_0)} \psi_0)$ in the product topology of $I \times p^{-1}(V_0) \times C_b^1(D_0, E_0)$.

Given any $(r, x, h) \in V_0$ there is a unique linear map $A_{(x, h)}$ such that $D\varphi_0(x) h = \Graph A_{(x, h)}$. Let $\overline{A}_{(x, h)} : H_0 \rightarrow \mathbb{R}^n$ be the map $v \mapsto (v, A_{(x, h)}v)$. 
We define a linear map $T_{(r, x, h)}:H_0 \rightarrow h$ by
\[T_{(r, x, h)} v =  \frac{r \|v\|_{x_0}}{r_0} \cdot \frac{D \varphi(x)^{-1} \overline{A}_{(x, h)} v}
{\|D \varphi(x)^{-1} \overline{A}_{(x, h)} v \|_{x}},
\]
mapping $D_0$ into $h(r)$.
As $H_0$ and $E_0$ are complementary spaces, we may identify $\mathbb{R}^n$ with the product $H_0 \times E_0$. Let $\pi_{E_0}$ be the projection to the second coordinate. Now $\Psi$ is defined by letting 
\[
 \Psi_{(r, x, h)} \psi (v) = \pi_{E_0} D\varphi(x) (T_{(r, x, h)}v, \psi(T_{(r, x, h)} v))
\]
for each $\psi$ in $C_b^1(h(r), E_x^c)$. 

If $(r_1, x_1, h_1)$ is another point in $I \times G^u M$ we pick a chart $(V_1, \psi_1)$ around $x_1$ and produce another bundle chart 
\[ \Psi':\underline{p}^{-1}(I \times p^{-1}(V_1)) \rightarrow I \times p^{-1}(V_0)\times C_b^1(D_1, E_1) \] in the same way.
We leave it to the reader to verify that if $V_0 \cap V_1 \neq \emptyset$, then
\[
 \Psi' \Psi^{-1}: I \times p^{-1}(V_0 \cap V_1)\times C_b^1(D_0, E_0) \rightarrow I\times p^{-1}(V_0 \cap V_1) \times C_b^1(D_1, E_1)
\]
is indeed a fibre preserving homeomorphism.

\begin{prop}\label{compactK}
 $\K(a)$ is compact for every $a \in (0, r_0)$.
\end{prop}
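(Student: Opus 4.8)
The claim is that $\K(a)$, the stratum of carriers of fixed radius $a$, is compact. The plan is to realise $\K(a)$ as a closed subset of a compact space built from the bundle construction just described. First I would fix $a \in (0, r_0)$ and note that the underlying base $\{a\} \times G^u M$, which records the centre $x$ and direction $h$, is compact: $M$ is compact and each Grassmannian fibre $G^u(T_x M)$ is compact, so $G^u M$ is a compact bundle. Thus the only non-trivial direction of non-compactness could come from the fibre coordinate $\psi \in C_b^1(h(a), E_x^c)$, and the whole point is that the three carrier conditions — $\psi(0) = 0$, $D\psi(0) = 0$, and $\exp_x \Graph(\psi)$ admissible (hence $K_0$-Lipschitz tangent bundle) — force the relevant $\psi$'s to lie in a compact family.

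**The key compactness input.** The heart of the matter is that a $C^1$ map $\psi : h(a) \to E_x^c$ whose graph is $K_0$-Lipschitz and which satisfies $\psi(0) = 0$, $D\psi(0) = 0$ has, on the ball of radius $a < r_0$, a uniform $C^1$ bound: the Lipschitz condition on the tangent bundle bounds $\|D\psi(v)\|$ in terms of $K_0 \cdot \dist$, and integrating from the origin bounds $\|\psi(v)\|$ as well. Moreover the $K_0$-Lipschitz condition is exactly an equicontinuity statement for the family $\{D\psi\}$. So after transporting everything into the fixed model chart via the bundle maps $\Psi_{(a,x,h)}$ of Section~\ref{topology}, the carriers over a chart neighbourhood correspond to a family of maps in $C_b^1(D_0, E_0)$ that is bounded in $C^1$ and has equicontinuous derivatives — hence precompact in the $C^1$ topology by Arzel\`a--Ascoli (applied to the maps and their first derivatives). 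This is the step I expect to be the main obstacle: one must check that the $C^1$ bound and the equicontinuity are uniform as $(x,h)$ ranges over the chart, which requires controlling the distortion introduced by $\Psi_{(a,x,h)}$ and by $T_{(a,x,h)}$; these are built from $\exp$, $\varphi_0$, and the metric, all of which are at least $C^1$ and vary continuously over the compact chart, so the distortions are uniformly bounded, but writing this out carefully is the technical core.

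**Assembling the argument.** With the fibre precompactness in hand, I would cover the compact base $\{a\} \times G^u M$ by finitely many chart neighbourhoods $p^{-1}(V_1), \ldots, p^{-1}(V_k)$; over each, $\K(a) \cap \underline{p}^{-1}(\{a\} \times p^{-1}(V_j))$ sits inside the product of the (compact closure of the) base piece with a $C^1$-precompact set of model maps. Then I would take a sequence $\Gamma_n = (a, x_n, h_n, \psi_n)$ in $\K(a)$; passing to a subsequence, $(x_n, h_n) \to (x,h)$ in some $V_j$, and the transported maps $\Psi_{(a,x_n,h_n)}\psi_n$ converge in $C^1$ to some limit, which pulls back (via the continuity of the transition maps $\Psi'\Psi^{-1}$ established in the excerpt) to a $C^1$ map $\psi$ over $(x,h)$. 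It remains to verify that the limit is again a carrier: $\psi(0) = 0$ and $D\psi(0) = 0$ pass to the $C^1$ limit immediately, and the admissibility (i.e. $K_0$-Lipschitz tangent bundle) of $\exp_x \Graph(\psi)$ passes to the limit because the $K_0$-Lipschitz condition is a closed condition under $C^1$ convergence — the inequality $\|A_x(y)\| \le K_0 d^N(x,y)$ survives uniform convergence of tangent planes. Hence $\Gamma_n$ has a subsequence converging to a carrier in $\K(a)$, proving sequential compactness; since $\K$ is metrizable (as asserted in Section~\ref{topology}), this gives compactness of $\K(a)$.

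**A remark on what must be checked.** One subtlety I would flag explicitly: the topology on $\tilde\K$ is the $C^1$ topology on fibres, so I am claiming $\K(a)$ is $C^1$-compact, and the admissibility conditions (1)--(3) are precisely what cut down the full (non-compact) fibre $C_b^1(h(a), E_x^c)$ to a compact slice. The conditions $\psi(0)=0,\ D\psi(0)=0$ alone would only give a closed linear subspace; it is the $K_0$-Lipschitz bound on the tangent bundle — an a priori estimate coming from Proposition~\ref{curvature} — that supplies the compactness. So logically this proposition is where the curvature control earns its keep.
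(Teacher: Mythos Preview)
Your proposal is correct and follows essentially the same route as the paper: compact base $\{a\}\times G^uM$, fibre compactness via Arzel\`a--Ascoli from the $K_0$-Lipschitz bound, and closedness of the $K_0$-Lipschitz condition under $C^1$ convergence. The paper packages this as a general fibre-bundle lemma (closed subset with compact fibres over a compact base is compact) rather than your direct sequential extraction, but the content is the same; if anything, your explicit attention to uniformity of the Arzel\`a--Ascoli bounds across a chart is a point the paper's terse argument glosses over.
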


\begin{proof}
One may observe quite generally that if $\pi: F \rightarrow B$ is a fibre bundle over a compact base $B$ and $C \subset F$ a subset such that
\begin{enumerate}
 \item $C$ is closed,
\item $C \cap \pi^{-1}(p)$ is compact for every $p \in B$,
\end{enumerate}
then $C$ is compact. The proof of Proposition \ref{compactK} follows by taking $B = \{a\} \times G^u M$, $F = \tilde{\K}(a)$ and $C = \K(a)$. $\K(a)$ is closed because having $K_0$-Lipschitz tangent bundle is a closed property under $C^1$ convergence, and each $p^{-1} ((a, x, h))\cap \K(a)$ is compact by the Arzel\`{a} Ascoli theorem.
\end{proof}

We proceed to put a topology on $\tilde{\A}$ as follows. Fix $\Gamma_0 = (r_0, x_0, h_0, \psi_0) \in \tilde{\K}$ and let $W$ be some small neighbourhood of $\Gamma_0$. We write $q$ for the canonical projection $\tilde{\A} \rightarrow \tilde{\K}$. The topology we give on $\tilde{\A}$ is, again, locally a product topology, obtained by turning $q: \tilde{\A} \rightarrow \tilde{\K}$ into a fibre bundle. Each fiber $L_w^2(\Gamma)$ is isomorphic, via identification $h(r) \ni v \mapsto \exp_x (v, \psi(v))$, to $L_w^2(D_0)$. (Here $D_0$ is defined as in (\ref{D0})). Thus a map $\Phi$ from $q^{-1}(W)$ to $W \times L_w^2(D_0)$ must be defined so that 
\[
 \xymatrix{
\tilde{\A}  \supset   q^{-1}(W) \ar[d]_{q} \ar[r]^{\Phi} & V\times L_w^2(D_0) \ar[dl]^{\pi} \\ 
                           W       }
\]
commutes, i.e. $\Phi(r, x, h, \psi, \phi)$ should take the form 
$(r, x, h, \psi, \Phi_{(r, x, h, \psi)} \phi)$.
The natural choice here is 
$\Phi_{(r, x, h, \psi)}\phi (v) = \phi(\exp_x (T_{(r, x, h)} v , \psi(T_{(r, x, h)} v)))$. One readily verifies that if $\Phi'$ is difined analogously to $\Phi$ over some neighbourhood $W'$ of a carrier $\Gamma_1$ such that $W \cap W' \neq \emptyset$, then 
\[
 \Phi' \Phi^{-1}: (W\cap W') \times L_w^2(\Gamma_0) \rightarrow (W\cap W') \times L_w^2(\Gamma_1)
\]
is a fibre preserving homeomorphism.

\begin{prop}
 Every $\A(a, C)$ is compact, $a \in I$ and $C>0$.
\end{prop}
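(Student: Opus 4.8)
The plan is to mimic the proof of Proposition~\ref{compactK} (compactness of $\K(a)$), using the same abstract fibre-bundle principle: if $\pi:F\to B$ is a fibre bundle over a compact base $B$, and $C\subset F$ is closed with $C\cap\pi^{-1}(p)$ compact for every $p\in B$, then $C$ is compact. Here, however, the natural ``base'' is not $G^uM$ but the space of carriers itself; so I will apply the principle \emph{twice}, once for $\K(a)$ over $\{a\}\times G^uM$ (already done) and once for $\A(a,C)$ over $\K(a)$. Concretely, take $B=\K(a)$ (compact by Proposition~\ref{compactK}), $F=q^{-1}(\K(a))\subset\tilde{\A}$, and $C=\A(a,C)$, where $q:\tilde{\A}\to\tilde{\K}$ is the canonical projection. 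The conclusion is that $\A(a,C)$ is compact in the topology just defined on $\tilde{\A}$; a final short remark identifies this with compactness as a subset of $\M(M)$.

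First I would verify that $q$ restricted to $q^{-1}(\K(a))$ is genuinely a fibre bundle over $\K(a)$: this is immediate from the bundle charts $\Phi$ constructed above, since over any neighbourhood $W\subset\tilde{\K}$ of a carrier the map $\Phi$ trivialises $q$ as $W\times L_w^2(D_0)$, and the transition maps $\Phi'\Phi^{-1}$ were already checked to be fibre-preserving homeomorphisms. Restricting all charts to $W\cap\K(a)$ gives a bundle structure on $q^{-1}(\K(a))$ over the compact base $\K(a)$. Next I would check that $\A(a,C)$ is closed in $q^{-1}(\K(a))$: in a bundle chart this amounts to the set $\{(\Gamma,\phi): C^{-1}\le\phi\le C\ (\Gamma,1)\text{-a.e.}\}$ being closed in $W\times L_w^2(D_0)$ with the weak $L^2$ topology, which follows because the constraint $C^{-1}\le\phi\le C$ defines a weakly closed (indeed weakly compact, being bounded, closed and convex) subset of $L_w^2(D_0)$, and the normalisation $\frac1{|\Gamma|}\int_\Gamma\phi\,\omega_\Gamma=1$ is a weakly continuous constraint once $\Gamma$ is fixed --- but some care is needed where $\Gamma$ varies, since the reference measure $(\Gamma,1)$ and the Jacobian of the chart identification $h(r)\ni v\mapsto\exp_x(v,\psi(v))$ both move with $\Gamma$; one argues that these dependencies are continuous (the densities relating $(\Gamma,1)$ to Lebesgue on $D_0$ vary continuously in $\Gamma$ in the $C^0$, even $C^1$, topology on $\K(a)$), so that both constraints persist under passage to limits.

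Then I would check the fibrewise compactness: for fixed $\Gamma\in\K(a)$, the fibre $\A(a,C)\cap q^{-1}(\Gamma)$ corresponds under $\Phi_{\Gamma}$ to the set of $\phi\in L_w^2(D_0)$ with $C^{-1}\le\phi\le C$ and the correct normalisation. This set is bounded in $L^2$, convex, and weakly closed, hence weakly compact by the Banach--Alaoglu theorem (the closed unit ball of the Hilbert space $L^2(D_0)$ is weakly compact, and a bounded weakly closed subset of it is weakly compact). Applying the abstract principle, $\A(a,C)$ is compact. Finally, the inclusion $\A(a,C)\hookrightarrow\M(M)$ is continuous: if $(\Gamma_n,\phi_n)\to(\Gamma,\phi)$ in $\tilde{\A}$, then for any continuous $\varphi:M\to\mathbb{R}$, writing the integral $\int\varphi\,d(\Gamma_n,\phi_n)$ in the bundle chart, the integrand converges pointwise (in $v$) by continuity of $\exp$ and of the chart maps, is uniformly bounded by $C\sup|\varphi|$ times a continuously varying Jacobian, and $\phi_n\to\phi$ weakly against the fixed reference density --- so the integrals converge; this realises $\A(a,C)$ as a continuous image of a compact space, hence compact in $\M(M)$.

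The main obstacle I anticipate is the step where $\Gamma$ varies: one must be careful that the weak-$L^2$ constraints ``$C^{-1}\le\phi\le C$'' and the mass-one normalisation are preserved in the limit, \emph{uniformly} as the underlying carrier moves, because the identification of fibres $L_w^2(\Gamma)\cong L_w^2(D_0)$ distorts both the reference measure and the pointwise values of $\phi$ by Jacobian factors that depend on $\Gamma$. The resolution is that on the compact stratum $\K(a)$ all these distortion factors (the change-of-variables Jacobian of $v\mapsto\exp_x(T_{(r,x,h)}v,\psi(T_{(r,x,h)}v))$, and the density of $\omega_\Gamma$ relative to it) vary continuously and are bounded away from $0$ and $\infty$; combining this uniform control with the fact that bounded sets in $L^2$ over a fixed domain are weakly sequentially compact gives the result. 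Everything else is a routine transcription of the argument already used for $\K(a)$, together with standard facts about weak compactness of order intervals in $L^2$.
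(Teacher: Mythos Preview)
Your proposal is correct and follows essentially the same approach as the paper: both apply the fibre-bundle compactness principle from Proposition~\ref{compactK} with base $\K(a)$ and fibre $L_w^2$, verifying fibrewise compactness via Banach--Alaoglu and weak closedness of the constraints $C^{-1}\le\phi\le C$ and $\int\phi\,d(\Gamma,1)=1$. You are more explicit than the paper about the closedness verification when $\Gamma$ varies (a point the paper leaves implicit), and your final paragraph on continuity of $\iota:\A(a,C)\to\M(M)$ is not part of this proposition in the paper but is stated separately afterward.
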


\begin{proof}
 We apply the same argument as in Proposition \ref{compactK}. All we need to check is that each set
\[\A(a, C)_{\vert \Gamma} = \{\phi \in L_w^2(\Gamma): C^{-1} \leq \phi \leq C \text{ and } \int \phi \ d(\Gamma, 1) = 1 \} \]
is compact. Note that $\A(a, C)_{\vert \Gamma}$ is contained in the ball 
\[B_{C^2}:=\{ \phi \in L^2(\Gamma): \|\phi\|_2 \leq C^2 \}.\]
Since $L^2(\Gamma)$ is a Hilbert space, it is isomorphic to its dual space $(L^2(\Gamma))^*$ and the weak topology on $L^2(\Gamma)$ corresponds to the weak* topology on $(L^2 (\Gamma))^*$.
Hence, by the Banach-Alaoglu theorem, $B_{C^2}$ is compact. Consequently, 
every sequence $\phi_n \in \A(a, C)_{\vert \Gamma}$ has a weak accumulation point, i.e. $\int \phi_{n_j} \varphi d(\Gamma, 1) \rightarrow \int \phi \varphi d(\Gamma, 1)$ for some subsequence $\phi_{n_j}$ and every $\varphi \in L^2(\Gamma)$. In particular, 
\[
\lim_{j \rightarrow \infty} \frac{\int \chi_E \phi_{n_j} d(\Gamma, 1)}{(\Gamma,1)(E)} 
= \frac{\int_E \phi d(\Gamma, 1)}{(\Gamma, 1)(E)} \in [C^{-1}, C]
\]
for every Borel set $E \subset M$. Hence
$C^{-1} \leq \phi \leq C$. Taking $\varphi = 1$ proves that $\int \phi \ d(\Gamma, 1) = 1$, so $\A(a, C)_{\vert \Gamma}$ is indeed compact.
\end{proof}

\subsubsection{Admissible Measures}\label{admissmeas}

Let $\iota:\A \rightarrow \M(M)$ be the map that associates a quintuple to its corresponding measure. It is clear that $\iota$ is a continuous injection. Therefore, each regularity level $\A(a, C)$ of each strata $\A(a)$, $a<r_0$, corresponds to a compact set $\iota(\A(a, C)) \subset \M(M)$.

Consider the space
$\mathcal{M}(\mathbf{A})$  of Borel probability measures on
$\mathbf{A}$, endowed with the weak topology of measures.
We define the map 
\begin{eqnarray*}
 \uiota: \M(\A) &\rightarrow & \M(M) \\
\umu & \mapsto & \int \iota \ d\umu.
\end{eqnarray*}
That is, $\uiota(\umu)$ is given by the Fubini-like relation
\begin{equation}\label{fubini}
\uiota (\umu)(E) = \int_{\A} (\Gamma, \phi)(E) \ d \underline{\mu}(\Gamma, \phi)
\end{equation}
for every Borel set $E\subset M$.

\begin{defn}
We say that $\underline{\mu}$ is a lift of $\mu$ if $\uiota(\umu) = \mu$.
A measure $\mu$ in $\mathcal{M}(M)$ is said to be admissible if it
has some lift in $\M(\mathbf{A})$.
\end{defn}

It is useful to think of admissible measures as being convex combinations, in an ample sense, of simple admissible measures.

The space of admissible measures will be denoted by $\AM$. Furthermore, 
we write $\AM(a, C)$ for the set of admissible measure that have a lift
supported in $\A(a, C)$. Thus
\begin{itemize}
 \item $\AM = \uiota(\M(\A))$ and 
\item $\AM(a, C) = \uiota(\M(\A(a, C)))$
\end{itemize}
for every $0<a<r_0,\ C>0$.
Since $\iota:\A \rightarrow \M(M)$ is continuous, so is $\uiota$ (Proposition \ref{cont}). Therefore, admissible measures of fixed radius and bounded regularity levels form compact spaces:

\begin{prop}\label{admismeascomp}
$\AM(a, C)$ is compact for every $0<a < r_0$ and $C>0$.
\end{prop}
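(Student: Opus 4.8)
The plan is to deduce compactness of $\AM(a,C) = \uiota(\M(\A(a,C)))$ as the continuous image of a compact set. The two ingredients are: (i) $\M(\A(a,C))$ is compact, and (ii) $\uiota$ is continuous. Ingredient (ii) is already at hand: $\iota:\A \to \M(M)$ is a continuous injection (noted just before Section \ref{admissmeas}), hence $\iota$ restricted to $\A(a,C)$ is continuous, and by Proposition \ref{cont} the barycentre map $\umu \mapsto \int \iota\, d\umu$ is continuous on $\M(\A(a,C))$. So the real work is ingredient (i).

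For (i), I would argue that $\A(a,C)$ is a compact metrizable space (which we have already proved — it is one of the strata/regularity-level sets shown compact above, sitting inside $\M(M)$ via the continuous injection $\iota$), and that the space of Borel probability measures on a compact metrizable space is itself compact in the weak topology. This last fact is standard (it follows from the Riesz representation theorem identifying $\M(\A(a,C))$ with a weak-$*$ closed subset of the unit ball of $C(\A(a,C))^*$, together with Banach--Alaoglu); I would cite it rather than reprove it. One subtlety worth a sentence: the weak topology on $\M(\A(a,C))$ is defined using bounded continuous test functions on $\A(a,C)$, and since $\A(a,C)$ is compact these coincide with all continuous test functions, so the identification with $C(\A(a,C))^*$ is legitimate.

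Putting it together: $\M(\A(a,C))$ is compact by the above; $\uiota|_{\M(\A(a,C))}$ is continuous by Proposition \ref{cont}; therefore $\AM(a,C) = \uiota(\M(\A(a,C)))$ is a compact subset of $\M(M)$. I would also remark that $\M(M)$ is Hausdorff, so the image is in fact closed, though compactness is all that is claimed.

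The only place where any care is genuinely needed — and I expect this to be the main (minor) obstacle — is making sure the topology on $\A(a,C)$ that makes it compact (the one coming from the nested fibre-bundle construction in Section \ref{topology}) is the same topology with respect to which we build $\M(\A(a,C))$ and apply Proposition \ref{cont}. Since we have already shown the inclusion $\A(a,C) \hookrightarrow \M(M)$ is continuous and $\A(a,C)$ is compact in its intrinsic topology, its image is a compact metrizable subspace of $\M(M)$ and the two topologies on $\A(a,C)$ agree; this is enough to run Proposition \ref{cont} without ambiguity. Everything else is a routine invocation of Banach--Alaoglu / Riesz and the ``continuous image of compact is compact'' principle.
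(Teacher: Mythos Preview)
Your proposal is correct and follows essentially the same approach as the paper: the paper's proof is the single line ``the image of any continuous map from a compact space to a Hausdorff space is compact,'' which is exactly the argument you spell out in detail (compactness of $\M(\A(a,C))$ via Banach--Alaoglu plus continuity of $\uiota$ from Proposition~\ref{cont}). Your additional care about matching the intrinsic and induced topologies on $\A(a,C)$ is a welcome clarification the paper leaves implicit.
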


\begin{proof}
The image of any continuous map from a compact space to a Hausdorff space is compact.
\end{proof}

Contrary to $\iota$, $\uiota$ is not injective. 
This means that lifts are not unique. An easy illustration of this fact is to consider
Lebesgue measure $m$ on the circle. Here $\A$ is understood to be the collection of all measures equivalent to Lebesgue restricted to some interval, and whose densities density is bounded away from zero and infinity.
 
\begin{ex} \label{atomiclift}
We may partition the circle into any
finite  number of curves, say $\gamma_1, \ldots, \gamma_k$. Writing $\alpha_i = m(\gamma_i)$ and $m_{\vert \gamma_i}$ for normalised restrictions, we get the representation
\[
m = \sum_{i=1}^k \alpha_i m_{\vert \gamma_i}.
\]
That is, $m$ has the lift 
\[\underline{m}_1 = \sum_{i=1}^{k} \alpha_i \delta_{m_{\vert \gamma_i}}.\]
\end{ex}

The lift of $\underline{m}_1$ thus obtained is \emph{atomic}: it is a convex combination of Dirac measures. Each term corresponds to a line segment obtained by cutting the circle. As mentioned in section \ref{interlude}, this cutting business cannot be used in higher dimeinsions as it alters the geometry of objects too much, and quite a different philosophy must be adopted.

\begin{ex}\label{nonatomiclift}
Consider Lebesgue measure $m$ on the circle, just like in Example \ref{atomiclift}. Fix some small number $a>0$ and, for every $x \in S^1$, denote by $m_x$ the normalised restriction of $m$ to the interval $(x-a, x+a)$. The measure $m$ can then be expressed by the relation
\[
m = \int m_x \ dm(x).
\]
In this case we obtain the lift $\underline{m}_2 = \xi_* m$, where 
$\xi: S^1 \rightarrow \mathbf{A}(a)$ is the map taking $x$ to $m_x$.
\end{ex}

\subsubsection{A disintegration technique}\label{technique}

The lift $\underline{m}_2$ in the previous example is in certain ways superior to $\underline{m}_1$. One reason is that it perfectly reflects $m$, in the sense that the distribution of the centre of carriers is given by $m$ itself. More importantly, lifts analogous to $\underline{m}_2$ can be constructed in higher dimension, whereas atomic lifts like $\underline{m}_1$ cannot. Below we set forth a general scheme to produce non-atomic lifts to smooth measures in a more general setting. We will be able to conclude

\begin{prop}\label{AM invariant}
There exists a neighbourhood $\mathcal{U}$ of $f$ in $\PH$ such that $\AM$ is invariant under every $g \in \mathcal{U}$.
\end{prop}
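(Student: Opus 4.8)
The plan is to show that the push‑forward $g_*$ carries an admissible measure to an admissible measure, for every $g$ in a suitable $C^2$ neighbourhood $\mathcal{U}$ of $f$. By linearity of $g_*$ and the integral‑representation formalism of Section~2.1, it suffices to handle a \emph{simple} admissible measure $(\Gamma,\phi)$: if $g_*(\Gamma,\phi)$ is admissible for every $(\Gamma,\phi)\in\A$, with a lift depending measurably on $(\Gamma,\phi)$, then for a general admissible $\mu=\uiota(\umu)$ one integrates the assignment $(\Gamma,\phi)\mapsto(\text{lift of }g_*(\Gamma,\phi))$ against $\umu$ to produce a lift of $g_*\mu$; continuity/measurability of this assignment is what makes the integral well defined, and Proposition~\ref{cont} then identifies its barycentre with $g_*\mu$. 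So the crux is: \emph{for each carrier $\Gamma$ and each bounded density $\phi$, exhibit a non‑atomic lift of $g_*(\Gamma,\phi)$.}

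First I would fix $\mathcal{U}$ using Proposition~\ref{curvature}: for $g\in\mathcal{U}$ the image $g^{n_0}(\Gamma)$ (and already $g(\Gamma)$, by the admissibility definition's allowance of the first $n_0$ iterates) is an immersed $u$‑disc tangent to $S^u$ whose tangent bundle is $K_0$‑Lipschitz — i.e. it is a union of admissible manifolds, though not a disjoint one, and generally self‑intersecting and far from round. The key step is to disintegrate $g_*(\Gamma,\phi)$ over this image in the style of Example~\ref{nonatomiclift} rather than Example~\ref{atomiclift}. Concretely: let $N=g(\Gamma)$, let $g\colon\Gamma\to N$ have Jacobian $\Jac$, so that $g_*(\Gamma,\phi)$ is the smooth measure on $N$ with density $\phi\circ g^{-1}/\Jac$ (locally, along each injectivity branch one sums these contributions). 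Now integrate against the measure $g_*(\Gamma,\phi)$ itself: for each point $y\in N$ let $\Gamma_y$ be the carrier of some small fixed radius $a$ centred at $y$, cut out of $N$ (using that $N$ has $K_0$‑Lipschitz tangent bundle and $r_0$ is small, this is genuinely a carrier, after at most $n_0$ further iterates to legalise it), equipped with the conditional density $\phi_y$ obtained by restricting and renormalising. Then
\[
g_*(\Gamma,\phi)=\int_{N}(\Gamma_y,\phi_y)\,w(y)\,dg_*(\Gamma,\phi)(y)
\]
for an appropriate weight $w$ accounting for the overlap multiplicity (the point $y$ lies in $\Gamma_z$ for all $z$ in an $a$‑neighbourhood of $y$ inside $N$), and this exhibits the push‑forward of the map $y\mapsto(\Gamma_y,\phi_y)$ (times the weight) as a lift supported in $\A(a,C')$ for a controlled $C'$ depending on $C$, the distortion of $g$, and $a$. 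Carrying this out for all $g\in\mathcal{U}$ simultaneously is possible because all the constants ($K_0$, the distortion bound $C_0$ from Proposition~\ref{curvature}, the expansion constants) are uniform on $\mathcal{U}$.

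The main obstacle, and the place where real care is required, is the overlap bookkeeping: because $N=g(\Gamma)$ may intersect itself and because the carriers $\Gamma_y$ overlap, the weight function $w$ and the conditional densities $\phi_y$ must be defined so that the Fubini relation~(\ref{fubini}) holds \emph{exactly}, not just up to a constant, and so that $\log\phi_y$ stays uniformly bounded — the bound degrades as $a\to 0$, so one does not get invariance of a single stratum $\A(a)$, only of $\AM$ as a whole, which is exactly what is claimed. A clean way to organise this is to first pull everything back to a single reference disc: parametrise $\Gamma$ by $h(r)\subset E^c_x$, push the Lebesgue‑type measure forward, and do the disintegration upstairs where there is no self‑intersection, then map down; the self‑intersections of $N$ then only contribute a measurable, bounded multiplicity factor. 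I also need measurability of $y\mapsto(\Gamma_y,\phi_y)$ as a map into $\A(a,C')$ — this follows from the bundle charts of Section~\ref{topology}, since in those charts the assignment is given by the smooth data of $g$ and $N$ composed with the (continuous) exponential‑chart identifications. Granting all this, Proposition~\ref{AM invariant} follows, and in fact one gets the sharper statement anticipated in Section~\ref{interlude}: iterating, $g_*^n(\Gamma,1)$ has a lift giving mass arbitrarily close to $1$ to a prescribed stratum $\A(a)$ once $n$ is large.
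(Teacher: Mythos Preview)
Your approach is essentially the paper's: push forward a simple admissible measure, then disintegrate the image non-atomically \`a la Example~\ref{nonatomiclift}, parametrising by the \emph{domain} $\Gamma$ rather than the image (your ``pull everything back'' fix is exactly what the paper does from the outset, defining $\xi:\Gamma\to\A$, $x\mapsto(\Gamma_x,\phi_x)$, and pushing forward $(\Gamma,\rho)$; this sidesteps the self-intersection ambiguity cleanly). The operator you build is the paper's $\Xi_{(g,a)}$, and the commuting square $\uiota\circ\Xi_{(g,a)}=g_*\circ\uiota$ is precisely how Proposition~\ref{AM invariant} is concluded.

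There is, however, one genuine gap: you take $\Gamma_y$ of \emph{fixed} radius $a$, but for $y$ (equivalently, for $x\in\Gamma$) close to the boundary of $N=g(\Gamma)$ there is no carrier of radius $a$ centred at $y$ inside $N$ --- ``cut out of $N$'' would yield a half-moon, not a carrier. The paper handles this with the variable radius $R_a(x)=\min\{a,\tfrac12 d^{\Gamma,1}(x,\partial\Gamma)\}$, which is $\tfrac12$-Lipschitz in the pulled-back metric; this is what actually forces the lift to spread over radii in $(0,a]$ rather than sit on a single stratum, and it is also what makes the overlap sets $V_y$ comparable in size (Sublemma~\ref{estimate}) so that the densities $\phi_x$ stay uniformly bounded (Lemma~\ref{Lip1}). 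Your sentence about the bound ``degrading as $a\to0$'' does not capture this; the issue is not small $a$ but proximity to $\partial\Gamma$. Once you insert the variable radius, your weight $w$ becomes the paper's $\rho$, your $\phi_y$ becomes the paper's $\phi_x$, and the Fubini verification is the three-step computation in Section~\ref{technique}. (Minor slip: you wrote $h(r)\subset E^c_x$; the carrier is parametrised over $h(r)$ with $h\subset S^u_x$, mapping into $E^c_x$.)
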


This is a rather curious fact, since $\A$ is far from being invariant. For better comprehension, 
we illustrate the technique by a toy model on the interval $(0, \infty)$. Once understood, 
the general construction is a straightforward adaption, although the underlying idea gets a bit obscured by heavy notation.

\begin{ex}\label{onedimdisint}
Let $m$ denote Lebesgue measure on $I:=(0, \infty)$ and $R:I \rightarrow \mathbb{R}$ be a function defined by $R(x)= x/2$. Consider the family $\{m_x\}_{x\in I}$ of normalised Lebesgue measure on $I_x := (x-R(x), x + R(x))$. We shall find a family of densities densities $\phi_x: I_x \rightarrow \mathbb{R}$ with  $\int_{I_x} \phi_x\  dm_x = 1$, and a weight $\rho: I\rightarrow \mathbb{R}$,  such that 
\begin{equation*}
\int (\phi_x m_x) \  d(\rho m)(x) = m.
\end{equation*}
That is done by first finding any family $\tilde{\phi}_x:I_x \rightarrow \mathbb{R}$ satisfying
\begin{equation*}
\int (\tilde{\phi}_x m_x) \ dm(x) = m;
\end{equation*}
then take $\rho(x) = \int \tilde{\phi}_x dm_x$ and normalise $\phi_x = (\rho(x))^{-1}\tilde{\phi}_x$.

Let $V_y= \{x\in I: \vert y-x\vert<R(x)\}  = (\frac{2 y}{3}, 2y)$. The trick is to take
\[
\tilde{\phi}_x(y) = \frac{m(I_x)}{m(V_y)} = \frac{3x}{4y}
\]
so that each $\frac{\tilde{\phi}_x}{m(I_x)}$ gives the same value at $y$, whenever $x \in V_y$.

We have 
\[
\tilde{\phi}_x m_x (E) = \int_{I_x} \frac{\tilde{\phi}_x(y) \chi_E(y)}{m(I_x)}dm(y)
\]
for any Borel set $E \subset \mathbb{R}$. (Here $\chi_E$ denotes the indicator function of $E$.)

Hence, by Fubini's Theorem, 
\begin{equation*}
\begin{split}
 \int \tilde{\phi}_x m_x(E)\ dm(x) = \int_I \left( 
\int_{I_x} \frac{\tilde{\phi}_x(y) \chi_E(y)}{m(I_x)}dm(y) \right)dm(x) \\
= \int_I \left( \int_{V_y} \frac{\chi_E(y)}{m(V_y)} dm(x)\right) dm(y)
=\int \chi_E(y)\ dm(y) = m(E)
\end{split}
\end{equation*}
as required. One may check that $\rho(x) \equiv \frac{3}{4} \log3$, so the family $\{\phi_x\}_{x \in I }$ is given by $\phi_x(y) = \frac{x}{y \log3}$.
\end{ex}

Now suppose that 
$(\Gamma, \phi) = (r, p, h, \psi, \phi)$ is some simple admissible measure. We shall prove that, although iterates of $\Gamma$ under $f^n$ are not carriers, push-forwards of $(\Gamma, \phi)$ under $f_*^n$ are admissible measures. A lift of $f_*^n (\Gamma, \phi)$ will be given explicitly and we will see in section \ref{ILBL} that this choice of lift has some extra good properties.

Consider some iterate $f^n(\Gamma)$ of the original carrier. Since $f$ is a local diffeomorphism, $f_{\vert \Gamma}$ is an immersion. However, if $n$ is large, it may happen that $f_{\vert \Gamma}$ is not injective. In particular, $f^n(\Gamma)$ need not be a submanifold in the strict sense of the word. Nevertheless, we shall associate, to each $x$ in $\Gamma$, a carrier denoted by $\Gamma_x$, such that 
$\bigcup_{x \in \Gamma} \Gamma_x = f^n(\Gamma)$.

For this purpose we define a new metric $\langle \cdot, \cdot \rangle^{\Gamma,n}$ on $\Gamma$, given by the pullback of the Riemannian metric through $f^n$: 
For $x\in \Gamma$ and $u, v \in T_x\Gamma$, set
\[ \langle u, v \rangle^{\Gamma, n} = \langle Df^n(x) u, Df^n(x) v \rangle\]
and let $d^{\Gamma,n}(x, y)$ be the distance on $\Gamma$ calculated using $\langle \cdot, \cdot \rangle^{\Gamma, n}$. We define a radius function 
\[R_a(x) = \min\{a,\textstyle \frac{1}{2}d^{\Gamma,n}(x, \partial \Gamma)\},\]
where $a$ is some small number in the interval $(0, r_0)$.
This choice makes $R_a$ Lipschitz continuous with constant $\frac{1}{2}$. 

We associate, to every $x \in \Gamma$, the space $h_x = Df^n(x) T_x \Gamma \in S_{f^n(x)}^u$, and identify $T_{f^n(x)} M$ with $h_x \times E_{f^n(x)}^c$. Since $R_a(x)$ is much smaller than $d^{\Gamma,n}(x, \partial \Gamma)$, there is some small connected neighbourhood $W_x$ of $x$ in $\Gamma$ such that $\exp_{f^n(x)}^{-1} f^n(W_x)$ is the graph of some $C^1$ map $\psi_x: h_x(R_a(x)) \rightarrow E_{f^n(x)}^c$. By Proposition \ref{curvature}, $\Gamma_x := (R_a(x), f^n(x), h_x, \psi_x) \subset f^n(\Gamma)$ is a carrier. Clearly
\[ f^n(\Gamma) = \bigcup_{x \in \Gamma} \Gamma_x .\]

Our goal is to find densities $\phi_x$ associated to each carrier $\Gamma_y$ and $\rho$ on $\Gamma$ such that, if $\xi$ is the map
\[\Gamma\ni x \mapsto (\Gamma_x, \phi_x) \in \A,\]
then $\xi_*(\Gamma, \rho)$ is a lift of $f_*^n (\Gamma, \phi)$. The construction of such densities will be made in three steps.

\begin{description}
 \item[Step 1]
\end{description}

A neighbourhood 
$V_y = \{x \in \Gamma: y \in W_z\}$ is assigned to every $y$ in $\Gamma$. Let $\tilde{\phi}_x: W_x \rightarrow \mathbb{R}$ be the family of densities given by 
\[\tilde{\phi}_x(y) = \frac{\phi(y)}{(\Gamma, 1)(V_y)} .\]
We claim that, given any Borel set $E \subset M$, we have
\[\int_{\Gamma} \left( \int_{E\cap W_x} \tilde{\phi}_x \ d(\Gamma, 1) \right) d(\Gamma, 1)(x) = (\Gamma, \phi)(E).\]
Indeed, 
\begin{eqnarray*}
&&\int_{\Gamma} \left( \int_{W_x} \tilde{\phi}_x(y) \chi_E(y) \ d(\Gamma, 1)(y) \right) d(\Gamma, 1)(x) \\
&=& \int_{\Gamma} \left( \int_{V_y} \tilde{\phi}_x(y)\chi_E (y) \ d(\Gamma, 1)(x) \right) d(\Gamma, 1)(y) \\
&=& \int_{\Gamma} \left(\int_{V_y} \ d(\Gamma, 1)(x)\right) 
\frac{\phi(y)\chi_E(y)}{(\Gamma, 1)(V_y)}\ d(\Gamma, 1)(y) \\
&=& \int \phi(y) \chi_E(y) \ d(\Gamma, 1)(y) = (\Gamma, \phi)(E).
\end{eqnarray*}

\begin{description}
 \item[Step 2]
\end{description}
The densities $\tilde{\phi}_x$ have an inconvenient defect. They are not normalised, i.e. we do not have 
\[\int_{W_x} \tilde{\phi}_x \ d(\Gamma, 1) = 1
\]
in general. We therefore write $\rho(x) = \int_{W_x} \tilde{\phi}_x d(\Gamma, 1)$ and consider the normalised densities
\[\hat{\phi}_x = \frac{\tilde{\phi}_x}{\rho(x)},\]
so that indeed $\int_{W_x} \hat{\phi}_x d(\Gamma, 1) = 1$ for every $x$ in $\Gamma$. Moreover,
\begin{equation}\label{phihat}
\int_{\Gamma} \left( \int_{W_x\cap E} \hat{\phi}_x \ d(\Gamma, 1) \right) \rho(x) d(\Gamma, 1)(x)
 = (\Gamma, \phi)(E)
\end{equation}
for every Borel set $E \subset M$. It follows from (\ref{phihat}) that $\int_{\Gamma} \rho \ d(\Gamma, 1) = 1$.

\begin{description}
 \item[Step 3]
\end{description}

In order to complete the construction of the densities $\phi_x$, we must transfer the $\hat{\phi}_x$ from $W_x$ to $\Gamma_x$. Let 
\[J_x(y) = \frac{\vert \Gamma_x \vert}{\vert \Gamma \vert } \vert \det Df_{\vert T_y \Gamma}^n \vert.\] 
That is, $J_x$ is the Jacobian of $f^n$ from $W_x$ to $\Gamma_x$ with respect to the measures $(\Gamma, 1)$ and $(\Gamma_x, 1)$. We define $\phi_x: \Gamma_x \rightarrow \mathbb{R}$ by 
\[ \phi_x(f^n(y)) = \frac{\hat{\phi}_x(y)}{J_x(y)}\]
for every $y \in W_x$.
One may readily check that 
\[\int (\Gamma_x, \phi_x) \ d(\Gamma, \rho)(x) = f_*^n (\Gamma, \phi).\] 
Indeed, given $E \subset M$, we calculate
\begin{eqnarray*}
 &&\int_{\Gamma} (\Gamma_x, \phi_x)(E) \ d(\Gamma, \rho)(x) \\
&=&\int_{\Gamma} \left( \int_{\Gamma_x} \phi_x \chi_E \ d(\Gamma_x, 1) \right) d(\Gamma, \rho)(x) \\
&=&\int_{\Gamma} \left( \int_{W_x} \phi_x \chi_{f^{-n}(E)} J_x \ d(\Gamma_x, 1)\right) d(\Gamma, \rho)(x) \\
&=& \int_{\Gamma} \left( 
\int_{W_x}\hat{\phi}_x\chi_{f^{-n}(E)}  \ d(\Gamma, 1)\right) d(\Gamma, \rho)(x) \\
&=&(\Gamma, \phi)(f^{-n}(E)) = f_*^n (\Gamma, \phi)(E).
\end{eqnarray*}

Next lemma proves that all densities $\phi_x$ in the above construction are bounded away from zero and infinity by uniform constants, i.e. independent of $x \in \Gamma$ and, more importantly, independent of the iterate $n \geq 0$.

\begin{lemma}\label{Lip1}
There exists $C>0$, independent of $n$, such that if $\phi$ satisfies $D^{-1} \leq \phi \leq D$, then each $\phi_x$ satisfies $ (D^2 C)^{-1} \leq \phi_x \leq D^2 C$. The number $C$ is uniform on a $C^2$-neighbourhood of $f$.
\end{lemma}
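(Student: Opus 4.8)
The plan is to track each of the three quantities that assemble $\phi_x$ through the construction, bounding each by constants that are uniform in $x$, in $n$, and in a $C^2$-neighbourhood of $f$. Writing out the definitions, $\phi_x(f^n(y)) = \hat\phi_x(y)/J_x(y) = \phi(y)/\bigl(\rho(x)\,(\Gamma,1)(V_y)\,J_x(y)\bigr)$, so it suffices to bound from above and below the three factors $(\Gamma,1)(V_y)$, $J_x(y)$, and $\rho(x)$, using the hypothesis $D^{-1}\le\phi\le D$ for the contribution of $\phi$ itself.

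First I would handle the Jacobian factor $J_x(y) = \tfrac{|\Gamma_x|}{|\Gamma|}\,|\det Df^n_{|T_y\Gamma}|$. Since $\Gamma_x$ has radius $R_a(x)\le a$ and its tangent bundle is $K_0$-Lipschitz (Proposition \ref{curvature}), the carrier $\Gamma_x$ is an almost-round disc, so $|\Gamma_x|$ is comparable to $R_a(x)^u$ up to a fixed geometric constant; similarly $|\Gamma|$ is comparable to $r^u$. The key point is that $\exp_{f^n(x)}^{-1}\!f^n(W_x)$ is the graph of $\psi_x$ over $h_x(R_a(x))$, and the definition of $R_a$ via the pulled-back metric $d^{\Gamma,n}$ is precisely engineered so that the expansion $|\det Df^n_{|T\Gamma}|$ is nearly constant (within a fixed multiplicative constant) on $W_x$: bounded distortion of the Jacobian of $f^n$ along admissible discs, which follows from (\ref{domination}), the $C^2$ bound on $f$, and the $K_0$-Lipschitz control on tangent bundles, all of which are uniform on a $C^2$-neighbourhood. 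Combining these, $J_x(y)$ is squeezed between two positive constants times $(R_a(x)/r)^u$, and crucially this estimate is $n$-independent because the distortion bound telescopes geometrically in $\tau^{n-n_0}$.

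Next, $(\Gamma,1)(V_y)$ where $V_y=\{x\in\Gamma: y\in W_x\}$: again the disc $W_x\subset\Gamma$ has $d^{\Gamma,n}$-radius $\approx R_a(x)$, so $V_y$ is, in the pulled-back metric, a region of $d^{\Gamma,n}$-diameter comparable to $a$, and by the same bounded-distortion estimate its $(\Gamma,1)$-measure — which is computed in the \emph{original} metric — is comparable to $(R_a(y)/r)^u$ up to uniform constants (using that $R_a$ is $\tfrac12$-Lipschitz so $R_a(x)\approx R_a(y)$ for $x\in V_y$). Then $\tilde\phi_x(y)=\phi(y)/(\Gamma,1)(V_y)$ lies between $D^{-1}$ and $D$ times $(r/R_a(y))^u$ up to uniform constants; integrating over $W_x$ against $(\Gamma,1)$ and using $(\Gamma,1)(W_x)\approx(R_a(x)/r)^u$ gives $\rho(x)=\int_{W_x}\tilde\phi_x\,d(\Gamma,1)$ between $D^{-1}C_1^{-1}$ and $DC_1$ for a uniform $C_1$. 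Assembling $\phi_x(f^n(y))=\phi(y)/(\rho(x)(\Gamma,1)(V_y)J_x(y))$ and multiplying the bounds, all powers of $R_a$ and $r$ cancel (this is the reason the construction normalises by $|\Gamma_x|/|\Gamma|$ in $J_x$), leaving $(D^2C)^{-1}\le\phi_x\le D^2C$ for a suitable uniform $C$.

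The main obstacle is the bounded-distortion estimate for $|\det Df^n_{|T\Gamma}|$ on the pieces $W_x$, uniformly in $n$. This is where the whole design pays off: one must show that for $y,y'\in W_x$ the ratio $|\det Df^n_{|T_y\Gamma}|/|\det Df^n_{|T_{y'}\Gamma}|$ is bounded, by decomposing $\log|\det Df^n_{|T_y\Gamma}| = \sum_{k=0}^{n-1}\log|\det Df_{|T_{f^k(y)}f^k(\Gamma)}|$ and estimating each term's variation using that $d(f^k(y),f^k(y'))\le d^{\Gamma,n}(y,y')\,\tau^{\,\text{(something)}}$ thanks to the backward contraction built into the unstable cone, the $C^2$-regularity of $f$ (controlling how $\log|\det Df_{|\cdot}|$ varies with basepoint and with the tangent plane in $G^uM$), and the $K_0$-Lipschitz control of the tangent planes $T_{f^k(y)}f^k(\Gamma)$ from Proposition \ref{curvature}; summing a geometric series gives an $n$-independent bound. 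Everything else is routine volume comparison for almost-round discs of bounded curvature.
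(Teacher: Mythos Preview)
Your overall strategy---bound each factor in $\phi_x(f^n(y))=\phi(y)\big/\bigl(\rho(x)\,(\Gamma,1)(V_y)\,J_x(y)\bigr)$ absolutely and watch the pieces cancel---is viable, but the execution contains a real error. You assert that $J_x(y)$ is squeezed between constants times $(R_a(x)/r)^u$ and that $(\Gamma,1)(V_y)$ is comparable to $(R_a(y)/r)^u$, both ``$n$-independently''. Neither is true: $J_x(y)=\tfrac{|\Gamma_x|}{|\Gamma|}\,|\det Df^n_{|T_y\Gamma}|$ carries the full expansion factor $|\det Df^n|$, which grows exponentially in $n$; correspondingly, $V_y$ has $d^{\Gamma,n}$-radius of order $R_a(y)$ (this is Sublemma~\ref{estimate}), so in the \emph{original} metric its volume is of order $R_a(y)^u/|\det Df^n|$, and $(\Gamma,1)(V_y)$ is of order $R_a(y)^u/(|\det Df^n|\,r^u)$. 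With your stated bounds the product $(\Gamma,1)(V_y)\,J_x(y)$ would be of order $(R_a/r)^{2u}\ll 1$, forcing $\phi_x$ to blow up; with the correct bounds the two stray factors of $|\det Df^n|$ cancel and the product is of order $(R_a(y)/R_a(x))^u\approx 1$, after which your argument goes through. So the approach is salvageable, but not as written.

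The paper sidesteps this bookkeeping entirely by bounding only the \emph{ratio} $\sup\phi_x/\inf\phi_x$: for $y,z\in W_x$ one has
\[
\frac{\phi_x(f^n(y))}{\phi_x(f^n(z))}=\frac{\phi(y)}{\phi(z)}\cdot\frac{J_x(z)}{J_x(y)}\cdot\frac{(\Gamma,1)(V_z)}{(\Gamma,1)(V_y)},
\]
and in each of the three ratios the dangerous $|\det Df^n|$ factor cancels automatically---bounded distortion handles the middle ratio, Sublemma~\ref{estimate} and the $\tfrac12$-Lipschitz property of $R_a$ handle the last. One then invokes the normalization $\int\phi_x\,d(\Gamma_x,1)=1$ to pass from a ratio bound $\sup\phi_x/\inf\phi_x\le D^2C$ to the two-sided absolute bound $(D^2C)^{-1}\le\phi_x\le D^2C$. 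This is both shorter and more robust than tracking absolute sizes.
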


The proof of Lemma \ref{Lip1} is based on a simple estimate. We use the notation $B_{r}^{\Gamma,n}(x)$ to denote a $d^{\Gamma,n}$-ball in $\Gamma$, centred at $x$.

\begin{sublem}\label{estimate}
 We have 
\[B_{R_a(x)/2}^{\Gamma,n}(x) \subset V_x \subset B_{3 R_a(x)}^{\Gamma,n}(x)\]
for every $x$ in $\Gamma$.
\end{sublem}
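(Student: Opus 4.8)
The plan is to show that $W_x$ is, up to a distortion $1+\eta$ that can be made as small as we please by shrinking $r_0$ (this is one of the conditions on $r_0$ postponed in Section~\ref{Grass}), the intrinsic $d^{\Gamma,n}$-ball of radius $R_a(x)$ centred at $x$; the Sublemma then drops out of a short metric computation. Precisely, I would first establish
\[
 B^{\Gamma,n}_{(1-\eta)R_a(x)}(x)\ \subseteq\ W_x\ \subseteq\ B^{\Gamma,n}_{(1+\eta)R_a(x)}(x)
 \qquad\text{for every }x\in\Gamma .
\]
By construction $f^n$ maps $W_x$ diffeomorphically onto the carrier $\Gamma_x$, and is a Riemannian isometry from $(W_x,\langle\cdot,\cdot\rangle^{\Gamma,n})$ onto $\Gamma_x$; moreover $\Gamma_x=\exp_{f^n(x)}\Graph(\psi_x)$ with $\psi_x$ defined on the ball $h_x(R_a(x))$, $\psi_x(0)=0$, $D\psi_x(0)=0$ and $\|D\psi_x(v)\|\le K_0\|v\|\le K_0 r_0$ by Proposition~\ref{curvature}. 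Since $\exp_{f^n(x)}$ is arbitrarily close to an isometry on $r_0$-balls, the orthogonal projection $\pi_{h_x}$ is a $(1+\eta)$-bi-Lipschitz homeomorphism from $\Gamma_x$ (with its intrinsic metric) onto $h_x(R_a(x))$. Lifting the radial segment from $0$ to $\pi_{h_x}\exp_{f^n(x)}^{-1}f^n(y)$ yields the right-hand inclusion.

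For the left-hand inclusion I would take $w$ with $d^{\Gamma,n}(x,w)<(1-\eta)R_a(x)$ and a path $\gamma$ in $\Gamma$ from $x$ to $w$ of length less than $(1-\eta)R_a(x)$. Since this length is $<R_a(x)\le\tfrac12 d^{\Gamma,n}(x,\partial\Gamma)$, the path $\gamma$ stays inside $\Gamma$; I would then run a connectedness argument along $\gamma$: as long as $\gamma(t)\in W_x$, the coordinate $v(t)=\pi_{h_x}\exp_{f^n(x)}^{-1}f^n(\gamma(t))$ has norm at most $(1+\eta)$ times the length of $\gamma|_{[0,t]}$, hence stays strictly inside $h_x(R_a(x))$; therefore $f^n(\gamma)$ can never reach the frontier of $\Gamma_x$, which sits over $\partial h_x(R_a(x))$, so $\gamma$ cannot leave $W_x$ and $w\in W_x$.

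Granting this, the Sublemma is elementary, using that $R_a$ is $\tfrac12$-Lipschitz for $d^{\Gamma,n}$ and that $V_x=\{z\in\Gamma:x\in W_z\}$. If $d^{\Gamma,n}(x,z)<R_a(x)/2$, then $R_a(z)\ge R_a(x)-\tfrac12 d^{\Gamma,n}(x,z)>\tfrac34 R_a(x)$, so $d^{\Gamma,n}(x,z)<\tfrac12 R_a(x)<(1-\eta)R_a(z)$ once $\eta<\tfrac13$, whence $x\in W_z$, i.e. $z\in V_x$; this gives $B^{\Gamma,n}_{R_a(x)/2}(x)\subseteq V_x$. Conversely, if $z\in V_x$ then $x\in W_z$, so $d^{\Gamma,n}(x,z)<(1+\eta)R_a(z)\le(1+\eta)\bigl(R_a(x)+\tfrac12 d^{\Gamma,n}(x,z)\bigr)$, which rearranges to $d^{\Gamma,n}(x,z)<\tfrac{2(1+\eta)}{1-\eta}R_a(x)\le 3R_a(x)$ as soon as $\eta\le\tfrac15$; thus $V_x\subseteq B^{\Gamma,n}_{3R_a(x)}(x)$. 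Fixing $r_0$ small enough that $\eta<\tfrac15$ finishes the proof.

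I expect the only genuinely \emph{delicate} point to be the left-hand inclusion $B^{\Gamma,n}_{(1-\eta)R_a(x)}(x)\subseteq W_x$, i.e. verifying that a short $d^{\Gamma,n}$-geodesic issuing from $x$ cannot escape the single sheet $\Gamma_x$ of the generally self-intersecting immersed set $f^n(\Gamma)$. This is exactly where the hypothesis $R_a(x)\le\tfrac12 d^{\Gamma,n}(x,\partial\Gamma)$ enters (to keep the path inside $\Gamma$ and its $h_x$-coordinate away from $\partial h_x(R_a(x))$), and where one must pin down the smallness of $\eta$ in terms of the fixed constant $K_0$ and of $r_0$. Everything else is soft: the comparison between the pulled-back Riemannian distance and the $h_x$-coordinate is a routine graph estimate built on Proposition~\ref{curvature}, and the deduction of the two stated inclusions of the Sublemma is the short computation above.
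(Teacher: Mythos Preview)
Your proposal is correct and follows essentially the same approach as the paper. The paper's proof is just a terser version of yours: it directly asserts the comparison you spell out, namely that $y\notin W_z$ forces $d^{\Gamma,n}(z,y)\geq(1-C(r_0))R_a(z)$ (your $\eta$ is the paper's $C(r_0)$), and then combines this with the $\tfrac12$-Lipschitz bound on $R_a$ exactly as you do. Your careful justification of the sandwich $B^{\Gamma,n}_{(1-\eta)R_a(x)}(x)\subseteq W_x\subseteq B^{\Gamma,n}_{(1+\eta)R_a(x)}(x)$ via the graph estimate and the connectedness argument is precisely the content the paper leaves implicit when it invokes ``some small $C(r_0)>0$ that can be chosen arbitrarily close to zero upon reducing $r_0$''; your identification of the left-hand inclusion as the only delicate point is apt.
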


\begin{proof}
We prove the first inclusion. The latter is very similar.
For a point $z \in \Gamma$ \emph{not} to be in $V_y$, it must satisfy
\[
\begin{cases}
d^{\Gamma,n}(z, y) \geq (1-C(r_0)) R_a(z) \\
R_a(z) \geq R_a(y) - \frac{1}{2} d^{\Gamma,n}(y, z)
\end{cases}
\]
for some small $C(r_0)>0$ that can be choosen arbitrarily close to zero upon reducing $r_0$.
Hence $d^{\Gamma,n}(y, z) \geq (1-C(r_0)) \frac{2}{3} R_a (y)$,  so $V_y$
contains a ball of $d^{\Gamma,n}$-radius 
$(1-C(r_0)) \frac{2}{3} R_a(x) > \frac{1}{2} R_a(x)$. 
\end{proof}

\begin{proof}[Proof of Lemma \ref{Lip1}.]
 Pick some $x \in \Gamma$. Recall that 
\[
 \frac{\phi_x(f^n(y))}{\phi_x(f^n(z))} = \frac{\phi(y) J(z) (\Gamma, 1)(V_z)}{\phi(z) J(y) (\Gamma, 1)(V_y)}
\]
for every $y, z \in W_x$. We shall use $\inf$ and $\sup$ as shorthand notations of $\ess \inf$ and $\ess \sup$. Thus we estimate 
\[
\frac{\sup \phi_x}{\inf \phi_x} \leq  \frac{\sup_{y \in W_x} \phi(y)}{\inf_{z\in W_x} \phi(z)} \frac{\sup_{z \in W_x} J(z)}{\inf_{y \in W_x} J(y)} 
\frac{\sup_{z \in W_x} (\Gamma, 1)(V_z)}{ \inf_{y \in W_x} (\Gamma, 1)(V_y)}.
\]
By hypothesis $D^{-1} \leq \phi \leq D$ so that 
\[\frac{\sup_{y \in W_x} \phi(y)}{\inf_{z\in W_x} \phi(z)} \leq D^2.\]
We also know from the theory of expanding maps that there is some $C_0$ such that 
\[
 \frac{J_x (z)}{J_x (y)} \leq e^{C_0 d^{\Gamma,n} (y, z)}.
\]
Indeed, taking $C_0 = \sum_{k=0}^{\infty} \tau^{k-n_0} \Lip(\log \vert \det Df_{\vert \Gamma}\vert)$ will do just fine, but it is wise to exagerate the value a bit so that it is holds on a neighbourhood of $f$. Therefore,
\[
 \frac{\sup_{z \in W_x} J(z)}{\inf_{y \in W_x} J(y)} \leq e^{C_0 3 R_a(x)}.
\]
Finally, it follows from the curvature bounds in Proposition \ref{curvature} that there exists $C_1>1$ (that can be chosen arbitrarily close to $1$ upon reducing $r_0$) such that 
\[
  C_1^{-1} \vol (\mathbb{B}^u) r^u 
\leq (\Gamma, 1)(B_{r}^{\Gamma,n} (x)) \leq C_1 \vol(\mathbb{B}^u) r^u
\]
whenever $r < d^{\Gamma,n}(x , \partial \Gamma)$.
Here $\vol(\mathbb{B}^u)$ is the volume of the unit ball in $u$-dimensional Euclidean space.
Since $W_x$ is contained in a ball of $d^{\Gamma,n}$-radius slightly larger than $R_a(x)$, say $W_x \subset B_{3R_a(x)/2}^{\Gamma,n}(x)$, it follows from Sublemma \ref{estimate} and the fact that $R_a$ is $\frac{1}{2}$-Lipschitz that 
\[\frac{\sup_{z \in W_x} (\Gamma, 1)(V_z)}{ \inf_{y \in W_x} (\Gamma, 1)(V_y)} 
\leq \frac{C_1 \vol(\mathbb{B}^u) 3^u (R_a(x)+\frac{3}{4} R_a(x))^u}{C_1^{-1} \vol(\mathbb{B}^u) 2^{-u} (R_a(x)-\frac{3}{4} R_a(x))^u} = 42^u C_1^2 \]
Thus, taking $C = e^{3 a C_0} C_1^2 42^u$, we arrive at 
\[\frac{\sup \phi_x}{\inf \phi_x} \leq D^2 C.\]
Clearly $\int \phi \ d(\Gamma, 1) = 1$ implies that $\inf \phi_x \leq 1 \leq \sup \phi_x$, and hence 
\[(D^2 C)^{-1} \leq \phi_x \leq D^2 C.\]  
\end{proof}

Consider the map
\begin{eqnarray}\label{xi}
\xi: h(r) & \rightarrow & \mathbf{A} \\ \nonumber
        x & \mapsto     & (f^n(x), R_a(x), h_x, \psi_x, \phi_x).
\end{eqnarray}
Our calculations show that $\xi_*(\Gamma, \rho)$ is indeed a lift of $f_*^n (\Gamma, \phi)$.
In other words, the push-forward under $f$ of any simple admissible measure is again admissible. 
Of course we may do the same things for admissible measures in general, simply by applying the machinery on every $(\Gamma, \phi) \in \A$; thus obtaining densities, say $\xi_{(\Gamma, \phi, f^n, a)}$ and $\rho_{(\Gamma, \phi, f^n, a)}$. We introduce the operator
\begin{eqnarray*}
\Xi_{(f^n, a)} : \M(\mathbf{A}) &\rightarrow& \M(\mathbf{A}) \\
\underline{\mu} & \mapsto & \int (\xi_{(\Gamma, \phi, f^n, a)})_* 
(\Gamma, \rho_{(\Gamma, \phi, f^n, a)}) d\underline{\mu}(\Gamma, \phi).
\end{eqnarray*}
It has the property that if $\uiota(\umu)$ is a lift of $\mu$, then 
$\uiota (\Xi_{(f^n, a)} \umu)$ is a lift of $f_*^n \mu$. That is,
\begin{equation*}
\begin{CD}
\M(\mathbf{A}) @ >  \displaystyle \Xi_{(f^n, a)} >>  \M(\mathbf{A}) \\
@V \uiota VV       @V \uiota VV \\
\M(M) @ > f_*^n  >> \M(M)
\end{CD}
\end{equation*}
commutes. In particular, $\AM$ is invariant under every neighbour of $f$, 
so Propostion \ref{AM invariant} is proved. Notice that $\{\Xi_{(f^n, a)}: n \geq 0\}$ is not a semi-group. Indeed, we do not wish to consider iterates $\Xi_{(f, a)} \circ \ldots \circ \Xi_{(f, a)}$, for doing that leaves us with little control regarding the radius of the simple admissible measures over which the lifts are distributed. On the contrary, the sequence $\Xi_{(f^n, a)} \umu$ always satisfies
$\lim_{n \rightarrow \infty} \Xi_{(f^n, a)} \umu (\A(a)) = 1$ for every $\umu \in \M(\A)$. This is because, given any simple admissible measure $(\Gamma, \phi)$, the set of points $x \in \Gamma$ for which $\Gamma_x$ has radius $a$ grows in $(\Gamma, \phi)$-measure towards $1$ as $n$ is increased.

We identify $\M(M)$ with corresponding linear continuous functionals on $C^0(M, \mathbb{R})$ in the usual manner by $\mu(\varphi) = \int \varphi d\mu$. We may consider $C^0(M, \mathbb{R})^*$ with its strong (or norm) topology, namely
\[\|\mu\|_s := \sup_{\substack{\varphi \in C^0(M, \mathbb{R}) \\ \|\varphi\|_{C^0} \leq 1}} \mu(\varphi).
\]
We may extend $f_*$ to the whole of $C^0(M, \mathbb{R})^*$ by 
\[f_* \mu (\varphi) = \mu(\varphi \circ f) \quad \forall \varphi \in C^0(M, \mathbb{R}).\]
Thus we have 
\[\|f_* \mu \| = \sup_{\substack{ \varphi \in C^0 \\ \|\varphi \|_{C^0} \leq 1}} \int \varphi \circ f \ d\mu \leq \sup_{ \substack{ \varphi \in C^0 \\ \| \varphi \|_{C^0}} \leq 1 } \int \varphi \ d\mu = \| \mu \|, \]
so that $\|f_* \| \leq 1$. (In fact $ \| f_* \| = 1$ as the equality $\|f_* \mu \| = \| \mu \|$ clearly holds whenever $\mu$ is a positive measure.)

Let $\A_1= \bigcup_{a \in I} \A(a, 1)$.
\begin{prop}\label{suppA1}
 Every admissible measure is strongly approximated by measures having a lift in $\A_1$.
\end{prop}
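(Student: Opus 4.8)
The plan is to reduce to the case of a single simple admissible measure $(\Gamma,\phi)$, for which I would run the disintegration machinery of Section~\ref{technique} with the trivial iterate $f^{0}=\mathrm{id}$ while discarding the densities it produces and replacing them by the constant $1$; and then pass to arbitrary admissible measures by a dominated convergence argument. Heuristically this is the multi-dimensional analogue of the elementary fact that, on the circle, averaging a density over intervals of shrinking radius converges to the density in $L^{1}$; the subcarriers $\Gamma_{y}$ play the role of those intervals.

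Concretely, fix $(\Gamma,\phi)=(r,x,h,\psi,\phi)$ and a small $a\in(0,r_{0})$. For each $y\in\Gamma$ let $\Gamma_{y}$ be the subcarrier of $\Gamma$ centred at $y$ of radius $R_{a}(y)=\min\{a,\tfrac12 d^{\Gamma}(y,\partial\Gamma)\}$, exactly as in Step~1 of Section~\ref{technique} with $n=0$, so that $\bigcup_{y\in\Gamma}\Gamma_{y}=\Gamma$; each $\Gamma_{y}$ is a genuine carrier (any small subdisc of an admissible manifold is admissible, and Proposition~\ref{curvature} controls its curvature), hence $(\Gamma_{y},1)$ is a simple admissible measure of radius $R_{a}(y)<r_{0}$ with constant density, i.e.\ an element of $\A_{1}$. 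I would then set
\[
\nu_{a}:=\int_{\Gamma}(\Gamma_{y},1)\,d(\Gamma,\phi)(y),
\]
which, letting $\xi_{a}\colon\Gamma\to\A_{1}$, $\xi_{a}(y)=(\Gamma_{y},1)$, has the lift $(\xi_{a})_{*}(\Gamma,\phi)\in\M(\A_{1})$; so $\nu_{a}$ is admissible with a lift supported in $\A_{1}$, and everything reduces to showing $\|(\Gamma,\phi)-\nu_{a}\|_{s}\to0$ as $a\to0$. A Fubini computation identical to the one in Step~1 shows $\nu_{a}$ is absolutely continuous with respect to $(\Gamma,1)$ with density $g_{a}(z)=\int_{\Gamma}k_{a}(z,y)\phi(y)\,d(\Gamma,1)(y)$, where $k_{a}(z,y)=\tfrac{|\Gamma|}{|\Gamma_{y}|}\chi_{\Gamma_{y}}(z)$; since $(\Gamma,\phi)$ has density $\phi$ with respect to $(\Gamma,1)$, this means $\|(\Gamma,\phi)-\nu_{a}\|_{s}=\|\phi-g_{a}\|_{L^{1}(\Gamma,1)}$.

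The heart of the argument is then to recognise $T_{a}\colon\phi\mapsto g_{a}$ as an approximate identity on $L^{1}(\Gamma,1)$. One checks directly that $\int k_{a}(z,y)\,d(\Gamma,1)(z)=1$ for every $y$ (so $\nu_{a}$ is a probability and $\|T_{a}\|_{L^{1}\to L^{1}}\le1$), while the volume estimates already used in the proof of Lemma~\ref{Lip1} — the two-sided bounds $C_{1}^{-1}\vol(\mathbb{B}^{u})\varrho^{u}\le(\Gamma,1)(B_{\varrho}^{\Gamma}(z))\le C_{1}\vol(\mathbb{B}^{u})\varrho^{u}$ together with Sublemma~\ref{estimate} — give a uniform bound $\int k_{a}(z,y)\,d(\Gamma,1)(y)\le C_{2}$ and, more precisely, $\int k_{a}(z,y)\,d(\Gamma,1)(y)\to1$ as $a\to0$, uniformly for $z$ with $d^{\Gamma}(z,\partial\Gamma)$ bounded away from $0$. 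Since the family $\{T_{a}\}$ is uniformly bounded, it suffices to verify $T_{a}\phi\to\phi$ in $L^{1}$ on the dense set of continuous $\phi$: split $\Gamma$ into the shell $\{d^{\Gamma}(\cdot,\partial\Gamma)\le Na\}$, of $(\Gamma,1)$-measure $O(a)$ and on which $|g_{a}|\le C_{2}\|\phi\|_{\infty}$, and its complement, on which $|g_{a}-\phi|$ is dominated by the modulus of continuity of $\phi$ at scale $3a$ plus $\|\phi\|_{\infty}\,\big|\!\int k_{a}\,d(\Gamma,1)-1\big|$; both contributions vanish as $a\to0$.

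Finally, for general admissible $\mu=\uiota(\umu)$ I would put $\unu_{a}:=\int_{\A}(\xi_{a,\Gamma})_{*}(\Gamma,\phi)\,d\umu(\Gamma,\phi)\in\M(\A_{1})$, applying the above construction to each $(\Gamma,\phi)$ in the support of $\umu$; this integral is legitimate because $(\Gamma,\phi)\mapsto(\xi_{a,\Gamma})_{*}(\Gamma,\phi)$ is continuous, just as for the operator $\Xi_{(f^{n},a)}$ of Section~\ref{technique}. Then $\uiota(\unu_{a})=\int_{\A}\nu_{a}(\Gamma,\phi)\,d\umu(\Gamma,\phi)$, so
\[
\|\mu-\uiota(\unu_{a})\|_{s}\le\int_{\A}\|(\Gamma,\phi)-\nu_{a}(\Gamma,\phi)\|_{s}\,d\umu(\Gamma,\phi)\longrightarrow0
\]
by dominated convergence, the integrand being bounded by $2$ and tending to $0$ pointwise by the previous step. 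I expect the main obstacle to be the approximate-identity estimate $\|\phi-g_{a}\|_{L^{1}(\Gamma,1)}\to0$ itself, and within it the only genuinely delicate point is the degeneration of the radius function $R_{a}$ near $\partial\Gamma$ — harmless in the end only because that region carries $(\Gamma,1)$-mass $O(a)$, but requiring the boundary terms to be isolated cleanly.
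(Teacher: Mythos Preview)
Your proof is correct and considerably more detailed than the paper's own, which is explicitly only a ``brief outline''. The paper proceeds by a different decomposition: it first approximates the density $\phi$ in $L^1$ by a simple function $\sum_i a_i\chi_{A_i}$, then asserts without further argument that each normalised restriction $(\Gamma,1)_{|A_i}$ is strongly approximated by an admissible measure with lift in $\A_1$, and combines. You bypass the simple-function step entirely and mollify $(\Gamma,\phi)$ directly against the family of subcarrier Lebesgue measures $(\Gamma_y,1)$, recognising the resulting operator $T_a$ as an approximate identity on $L^1(\Gamma,1)$. This is more work, but it is self-contained: the paper's ``clearly'' in fact requires precisely the sort of subcarrier averaging you carry out (applied to the particular density $\chi_{A_i}/(\Gamma,1)(A_i)$), so your argument both proves the proposition and fills the gap left in the paper's sketch. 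The one point worth tightening is the Borel measurability of $(\Gamma,\phi)\mapsto(\xi_{a,\Gamma})_*(\Gamma,\phi)$ as a map $\A\to\M(\A)$, needed for the integral defining $\unu_a$; this follows from the bundle-chart description of $\A$ in Section~\ref{topology} just as for $\Xi_{(f^n,a)}$, but deserves an explicit sentence.
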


\begin{proof}
 A brief outline will suffice. Consider a simple admissible measure $(\Gamma, \phi)$ in $\A(a, C)$. 
We may approximate $\phi$ in the $L^1$ sense by a simple function $\sum_{i=1}^n a_i \chi_{A_i}$.
Clearly, the normalised restriction $(\Gamma, 1)_{\vert A_i}$ is strongly approximated by some admissible measure $\mu_i$ with lift in $\A_1$. Thus $\mu_{(\Gamma, \phi)} = \sum_i a_i (\Gamma, 1)(A_i) \mu_i$ is a strong approximation of $(\Gamma, \phi)$. The procedure can be done simultaneously for every simple admissible measure and hence works for admissible measures in general.
\end{proof}

We state a non-invertible analogue of Theorem 3 in \cite{PeSi}, proving existence of Gibbs-$u$ States for partially hyperbolic diffeomorphisms.

\begin{prop}\label{likePeSi}
There is a constant $C>0$, uniform in a $C^2$ neighbourhood of $f$, such that if $\mu_0$ is any admissible measure and
$\mu$ a weak accumulation point of $\frac{1}{n} \sum_{k=0}^{n-1} f_*
\mu_0$, then $\mu \in \AM (a, C)$ for every $0<a< r_0$.
\end{prop}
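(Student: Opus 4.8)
The plan is to use the disintegration of Section~\ref{technique} to lift the averages $\tfrac1n\sum_{k=0}^{n-1}f_*^k\mu_0$ into $\M(\A)$, to exploit that these lifts concentrate on a single \emph{compact} stratum, and then to transport that concentration back to $\M(M)$ using compactness of $\AM(a,C)$; here $C$ is the uniform constant furnished by Lemma~\ref{Lip1}, and it will be the constant of the statement. The argument runs parallel to the proof of Theorem~3 in \cite{PeSi}. First, though, I would reduce to the case in which $\mu_0$ has a lift supported in $\A_1$. Assume the statement is known in that case, let $\mu_0$ be an arbitrary admissible measure, and let $\mu$ be a weak accumulation point of $\tfrac1n\sum_{k=0}^{n-1}f_*^k\mu_0$. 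By Proposition~\ref{suppA1} there are admissible measures $\mu_0^{(j)}$ with lifts in $\A_1$ and $\|\mu_0-\mu_0^{(j)}\|_s\to0$; since $\|f_*\|\le1$, each averaging operator $\tfrac1n\sum_{k=0}^{n-1}f_*^k$ is a $\|\cdot\|_s$-contraction, whence $\|\tfrac1n\sum_{k=0}^{n-1}f_*^k(\mu_0-\mu_0^{(j)})\|_s\le\|\mu_0-\mu_0^{(j)}\|_s$ for all $n,j$. Because $\M(M)$ is compact metrizable, a diagonal extraction along a subsequence realizing $\mu$ produces, simultaneously for every $j$, a weak accumulation point $\mu^{(j)}$ of $\tfrac1n\sum_{k=0}^{n-1}f_*^k\mu_0^{(j)}$ along that same subsequence, and by hypothesis $\mu^{(j)}\in\AM(a,C)$ for all $a$. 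Lower semicontinuity of $\|\cdot\|_s$ under weak convergence gives $\|\mu-\mu^{(j)}\|_s\le\|\mu_0-\mu_0^{(j)}\|_s\to0$, so $\mu^{(j)}\to\mu$ weakly, and since $\AM(a,C)$ is compact, hence weakly closed (Proposition~\ref{admismeascomp}), $\mu\in\AM(a,C)$. It thus suffices to treat $\mu_0=\uiota(\umu_0)$ with $\umu_0(\A_1)=1$, that is, with $\umu_0$-almost every simple admissible measure carrying density $\equiv1$.

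Fix such a $\mu_0$ and fix $a\in(0,r_0)$, and set $\underline{\nu}_n=\tfrac1n\sum_{k=0}^{n-1}\Xi_{(f^k,a)}\umu_0\in\M(\A)$. By the commuting diagram for $\Xi_{(f^k,a)}$ together with linearity of the defining formula for $\uiota$, the measure $\underline{\nu}_n$ is a lift of $\tfrac1n\sum_{k=0}^{n-1}f_*^k\mu_0$. Every simple admissible measure in the support of $\Xi_{(f^k,a)}\umu_0$ has radius at most $a$ by construction and, since here the starting density is $\equiv1$ (so one may take $D=1$ in Lemma~\ref{Lip1}), density in $[C^{-1},C]$; hence $\Xi_{(f^k,a)}\umu_0$ is carried by $\bigcup_{0<b\le a}\A(b,C)$, and, the density constraint defining $\A(a,C)$ being then automatic, $\Xi_{(f^k,a)}\umu_0(\A(a,C))=\Xi_{(f^k,a)}\umu_0(\A(a))$. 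The latter tends to $1$ as $k\to\infty$ — this is the concentration property of $\Xi$ noted in Section~\ref{technique}, stemming from the fact that for a fixed simple admissible measure the centres $x$ whose carrier $\Gamma_x$ has radius $a$ fill the carrier up to a null set as $k$ grows. Averaging in $k$, $p_n:=\underline{\nu}_n(\A(a,C))\to1$.

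I would then split $\underline{\nu}_n=p_n\underline{\alpha}_n+(1-p_n)\underline{\beta}_n$, where $\underline{\alpha}_n:=p_n^{-1}\,\underline{\nu}_n|_{\A(a,C)}\in\M(\A(a,C))$ (for $p_n>0$). Since the defining formula for $\uiota$ is linear in the measure, $\tfrac1n\sum_{k=0}^{n-1}f_*^k\mu_0=p_n\,\uiota(\underline{\alpha}_n)+(1-p_n)\,\uiota(\underline{\beta}_n)$, where $\uiota(\underline{\alpha}_n)\in\AM(a,C)$ by definition while $(1-p_n)\,\uiota(\underline{\beta}_n)$ is a nonnegative measure of total mass $1-p_n\to0$, hence converges weakly to the zero measure because $M$ is compact. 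Given any weak accumulation point $\mu$ of $\tfrac1n\sum_{k=0}^{n-1}f_*^k\mu_0$, I would pass to a subsequence realizing it, extract further so that $\uiota(\underline{\alpha}_n)$ converges, observe that its limit lies in the compact set $\AM(a,C)$ (Proposition~\ref{admismeascomp}), and read off from the displayed identity that this limit equals $\mu$. Hence $\mu\in\AM(a,C)$; as $a\in(0,r_0)$ was arbitrary, the proposition follows.

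I expect the main obstacle to be the uniformity of $C$. Disintegrating $f_*^k(\Gamma,\phi)$ only bounds the resulting densities by $D^2C$ when $\phi$ has regularity $D$ (Lemma~\ref{Lip1}), so without the initial reduction to lifts in $\A_1$ the constant would depend on $\mu_0$; arranging that this reduction interacts correctly with weak accumulation points is exactly what forces the mixed strong/weak-topology argument of the first paragraph. A lesser technical nuisance is that $\M(\A)$ itself is not compact, which is why the concentrating mass must be isolated on the compact stratum $\A(a,C)$ rather than passing to a limit of the lifts $\underline{\nu}_n$ directly.
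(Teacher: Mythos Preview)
Your proof is correct and follows essentially the same strategy as the paper's: reduce via Proposition~\ref{suppA1} to lifts supported on $\A_1$, invoke Lemma~\ref{Lip1} with $D=1$ to get the uniform density bound $C$, and then pass from the approximants back to $\mu_0$ using that $f_*$ is a strong contraction together with compactness of $\AM(a,C)$. The paper compresses all of this into three sentences; what you add---the explicit concentration-and-splitting argument $\underline{\nu}_n=p_n\underline{\alpha}_n+(1-p_n)\underline{\beta}_n$ for the $\A_1$ case, and the lower-semicontinuity step $\|\mu-\mu^{(j)}\|_s\le\liminf\|\cdot\|_s$ for the reduction---are exactly the details the paper leaves implicit.
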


\begin{proof}
Take some sequence $\nu_i$ of admissible measures with lifts in $\A_1$, converging strongly to $\mu_0$. By Lemma \ref{Lip1}, there is some large number $C$ such that every weak accumulation point of $\frac{1}{n} \sum_{k=0}^{n-1} f_* \nu_i$ belongs to $\AM (a, C)$. Since $f_*$ is a strong contraction on $\M(M)$, it follows by compactness of $\AM(a, C)$ that weak accumulation points of $\frac{1}{n} \sum_{k=0}^{n-1} f_*
\mu_0$ also belong to $\AM(a, C)$.
\end{proof}

Let $\AM_f$ be the space of $f$-invariant admissible measures. Proposition \ref{likePeSi} gives this immediate corollary:

\begin{cor}\label{compactness}
$\AM_f$ is compact.
\end{cor}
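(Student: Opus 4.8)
The plan is to derive compactness of $\AMF$ directly from Proposition~\ref{likePeSi} together with the compactness statement in Proposition~\ref{admismeascomp}. The key observation is that an $f$-invariant admissible measure is its own Ces\`aro average, so Proposition~\ref{likePeSi} pins down \emph{where} it must live.

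First I would fix any $a \in (0, r_0)$ and let $C = C(a)$ be the constant furnished by Proposition~\ref{likePeSi}, uniform on a $C^2$-neighbourhood of $f$. Take $\mu \in \AMF$. Since $\mu$ is $f$-invariant, the sequence $\frac{1}{n}\sum_{k=0}^{n-1} f_*^k \mu$ is constantly equal to $\mu$, so trivially $\mu$ is a weak accumulation point of it. Moreover $\mu$, being admissible, is an admissible measure in the sense required by the hypothesis of Proposition~\ref{likePeSi}. Applying that proposition gives $\mu \in \AM(a, C)$. Hence $\AMF \subset \AM(a, C)$, which is compact by Proposition~\ref{admismeascomp}.

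It remains to see that $\AMF$ is closed in $\AM(a,C)$ (equivalently in $\M(M)$), for a closed subset of a compact set is compact. This is where I would spend a line of care: $\AMF = \AM \cap \{\mu : f_*\mu = \mu\}$. The set of $f$-invariant probability measures is closed in $\M(M)$ because $\mu \mapsto f_*\mu$ is weakly continuous (it is dual to $\varphi \mapsto \varphi\circ f$ on $C^0(M,\mathbb{R})$). And $\AM = \uiota(\M(\A))$; although $\AM$ itself need not be closed, we do not need that — we only need $\AMF \subset \AM(a,C)$, and $\AM(a,C)$ \emph{is} closed (being compact in the Hausdorff space $\M(M)$). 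So $\AMF = \AM(a,C) \cap \{\mu : f_*\mu = \mu\}$ is the intersection of a compact set with a closed set, hence compact.

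The only mild subtlety — and the step I expect to require the most attention — is the very first reduction: one must make sure that an $f$-invariant admissible measure genuinely qualifies as an ``admissible measure'' input to Proposition~\ref{likePeSi} and that the proposition's conclusion $\mu \in \AM(a,C)$ for \emph{every} $a$ (with $C$ depending on $a$) is exactly what is needed; in particular the membership $\AMF \subset \AM(a,C)$ holds for a \emph{single} fixed choice of $a$, which already suffices since $\AM(a,C)$ is compact. No delicate estimate is involved beyond what Propositions~\ref{admismeascomp} and~\ref{likePeSi} already provide; the argument is essentially ``invariant $\Rightarrow$ fixed by Ces\`aro averaging $\Rightarrow$ lands in a compact set, and invariance is a closed condition.''
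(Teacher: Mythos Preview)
Your proof is correct and follows essentially the same approach as the paper: use Proposition~\ref{likePeSi} (via the observation that an invariant measure equals its own Ces\`aro average) to get $\AMF \subset \AM(a,C)$, hence $\AMF = \AM(a,C)\cap \M_f(M)$, which is compact by Proposition~\ref{admismeascomp} and closedness of $\M_f(M)$. The paper compresses this into two lines, but the logic is identical.
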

\begin{proof} Proposition \ref{likePeSi} implies that there exist $a, C>0$ such that $\AM_f = \AM(a, C) \cap \M_f(M)$. Hence by Proposition \ref{admismeascomp}, 
$\AM_f$ is compact.
\end{proof}

\begin{ex}
 Let $X\subset U$ be some Borel set of positive Lebesgue measure, e.g. the basin of a physical measure. 
Consider the normalised restriction 
$m_{\vert X}$ of $m$ to $X$. It is not necessarily admissible, but may be strongly approximated by admissible measures, say $\mu_i \rightarrow m_{\vert X}$. 
From Proposition \ref{likePeSi} we know that any accumulation point $\mu_i^{\infty}$ of 
$\frac{1}{n}\sum_{k=0}^{n-1} f_*^k \mu_i$ belongs to $\AM_f$. Since $f_*$ acts as a contraction on $C^0(M, \mathbb{R})^*$ when considered with the strong topology, it follows by compactness of $\AM_f$ that any accumulation point of $\frac{1}{n}\sum_{k=0}^{n-1}f_*^k m_{\vert X}$ is also in $\AM_f$. 
\end{ex}

We take a further look at the possible uses of Proposition \ref{likePeSi}.
Let $S$ denote the set of pairs $(f, \mu)$  in $\PH \times \M(M)$ such that $\mu$ belongs to $\AM_f$.

\begin{prop}\label{closed}
$S$ is a closed subset of $ \PH \times \M(M)$.
\end{prop}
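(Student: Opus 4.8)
The plan is to show that $S$ is sequentially closed, which suffices since $\PH\times\M(M)$ is metrizable ($\Dl(M)$ carries the $C^2$ topology and $\M(M)$ the weak topology on a compact manifold). So I would take a sequence $(f_k,\mu_k)\to(f,\mu)$ in $\PH\times\M(M)$ with $\mu_k\in\AM_{f_k}$ for every $k$, and check that $\mu\in\AM_f$; this amounts to verifying that $\mu$ is $f$-invariant and that $\mu$ is admissible.

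Invariance is the soft part. For any continuous $\varphi:M\to\mathbb{R}$ one has $\varphi\circ f_k\to\varphi\circ f$ uniformly (because $f_k\to f$ in $C^0$ and $\varphi$ is uniformly continuous), so together with $\mu_k\to\mu$ weakly this gives $\int\varphi\circ f_k\,d\mu_k\to\int\varphi\circ f\,d\mu$; on the other hand $\int\varphi\,d\mu_k\to\int\varphi\,d\mu$, and the two left-hand sides agree because $\mu_k$ is $f_k$-invariant. Hence $\int\varphi\,d\mu=\int\varphi\circ f\,d\mu$ for every $\varphi$, i.e. $f_*\mu=\mu$. Equivalently, $(g,\nu)\mapsto g_*\nu$ is continuous on $\Dl(M)\times\M(M)$, so $\{(g,\nu):g_*\nu=\nu\}$ is closed there.

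For admissibility I would use that, by Section \ref{toolbox} (Propositions \ref{curvature} and \ref{AM invariant}, and the rigidity discussion around them), there is a $C^2$-neighbourhood $\mathcal{U}$ of $f$ on which the admissible-manifold machinery --- hence the spaces $\A$, $\AM$ and each $\AM(a,C)$ --- is unchanged and invariant under every $g\in\mathcal{U}$, and on which the constant $C$ of Proposition \ref{likePeSi} is uniform. Fix $a\in(0,r_0)$. For $k$ large $f_k\in\mathcal{U}$, so $\mu_k$ is admissible, and being $f_k$-invariant it equals each Ces\`aro average $\frac1n\sum_{j=0}^{n-1}(f_k)_*^j\mu_k$; Proposition \ref{likePeSi}, which holds with the same $C$ for every map in $\mathcal{U}$, then gives $\mu_k\in\AM(a,C)$. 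The same reasoning, together with the trivial reverse inclusion, shows $\AM_g=\AM(a,C)\cap\M_g(M)$ for every $g\in\mathcal{U}$, so that
\[
 S\cap\bigl(\mathcal{U}\times\M(M)\bigr)=\bigl(\mathcal{U}\times\AM(a,C)\bigr)\cap\bigl\{(g,\nu)\in\mathcal{U}\times\M(M):g_*\nu=\nu\bigr\}.
\]
The first set on the right is closed in $\mathcal{U}\times\M(M)$ because $\AM(a,C)$ is compact (Proposition \ref{admismeascomp}), and the second is closed by the previous paragraph; hence so is their intersection, and since $(f,\mu)$ lies in $\mathcal{U}\times\M(M)$ and is a limit of points of $S$, it belongs to $S$. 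As $f$ was an arbitrary point of $\PH$, this local closedness gives that $S$ is closed.

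The step I expect to carry the real weight is the identity $\AM_g=\AM(a,C)\cap\M_g(M)$ for $g$ in a fixed neighbourhood of $f$: this is where the rigidity of the admissible-manifold (and hence admissible-measure) notion under $C^2$-perturbations must be combined with the uniformity of the constant in Proposition \ref{likePeSi}, ultimately Lemma \ref{Lip1}, which is uniform on a $C^2$-neighbourhood of $f$. Everything after that is formal --- intersecting the fixed compact set $\AM(a,C)$ with the closed invariance locus. Should one prefer not to rely on the constancy of $\AM(a,C)$ across $\mathcal{U}$, the alternative is to lift each $\mu_k$ to a measure on $\A(a,C)$, observe that these lifts push forward into one compact subset of $\M(\M(M))$ --- the carriers of radius $a$ with $K_0$-Lipschitz tangent bundle tangent to the fixed cone field $S^u$ form a compact family, with densities confined to $[C^{-1},C]$ --- extract a weak limit, identify it (using continuity of $E^c$ in the map and closedness of the $K_0$-Lipschitz condition under $C^1$ limits) as carried by $\A(a,C)$, and apply Proposition \ref{cont} to recognise its barycentre as $\mu$.
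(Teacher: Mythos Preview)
Your proof is correct and follows essentially the same approach as the paper: take a convergent sequence $(f_k,\mu_k)\to(f,\mu)$, note that $f$-invariance of $\mu$ is soft, and use the uniformity of the constant in Proposition~\ref{likePeSi} to place all $\mu_k$ in the fixed compact $\AM(a,C)$ (Proposition~\ref{admismeascomp}), whence $\mu\in\AM(a,C)$. Your write-up is more explicit about the local-to-global packaging and the identity $\AM_g=\AM(a,C)\cap\M_g(M)$, but the core argument is the paper's.
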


\begin{proof}
Consider a sequence $f_i$, converging to $f$ in $\PH$ in the $C^2$ topology and let $\mu_i$ be any sequence of probabilities such that $\mu_i \in \AM_{f_i}$ for every $i$. Taking a subsequence, if necessary, we may suppose that $\mu_i$ converge to some $\mu$, which is necessarily an invariant measure for $f$. Fix some small $a$ and large $C$. By Proposition \ref{likePeSi}, every $\mu_i$ belongs to $\AM(a, C)$, which is compact by Proposition \ref{admismeascomp}. Hence $\mu \in \AM(a, C)$ and we are done.
\end{proof}

\subsubsection{Ergodic admissible measures}\label{ergodic}

Inspired by Lemma 3.14 in \cite{T}, we prove here that every invariant admissible measure decomposes into ergodic admissible measures. We recall Choquet's theorem on integral representation in locally convex spaces.

\begin{thm}[Choquet \cite{Ph}]
 Let $Y$ be a locally convex topological vector space and $X$ a compact convex metrisable subset. 
Denote by $\operatorname{ex} X$ the set of extreme points of $X$. Then, given any point $p$ in $X$, 
there exists a Borel probability $\mu$ on $X$ such that
\begin{enumerate}
 \item $\mu(\operatorname{ex} X) = 1$,
\item $\ell(p) = \int \ell(x) \ d\mu(x)$ for every linear continuous $\ell: Y \rightarrow \mathbb{R}$. 
\end{enumerate}

\end{thm}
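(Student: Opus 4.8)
The statement is Choquet's theorem itself; the plan is to run the classical argument, exploiting metrisability so that the general machinery of maximal measures in the Choquet ordering can be bypassed.

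First I would record two preliminaries. (i) \emph{$\operatorname{ex} X$ is a Borel set.} Fixing a compatible metric $d$ on $X$, each set $F_n=\{x\in X: x=\tfrac12(y+z)\text{ for some }y,z\in X\text{ with }d(y,z)\ge 1/n\}$ is closed, by compactness of $X$, and $X\setminus\operatorname{ex} X=\bigcup_{n\ge1}F_n$, so $\operatorname{ex} X$ is a $G_\delta$. (ii) \emph{There is a strictly convex $h\in C(X)$.} The weak topology $\sigma(Y,Y^*)$ restricted to $X$ is Hausdorff, hence coincides with the original topology (it is the image of a continuous bijection from a compact space), so $X$ is second countable in the weak topology; since that topology is generated by $Y^*$, some countable subfamily $\{\ell_n\}\subset Y^*$ already separates the points of $X$. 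After rescaling so that $\sup_X|\ell_n|\le1$, set $h=\sum_n 2^{-n}\ell_n^2$: each $\ell_n^2$ is convex, and on any segment $[x,y]$ with $x\ne y$ some $\ell_n$ is non-constant, so $h$ is strictly convex.

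Next I would set up an optimisation. Let $P\subset\M(X)$ be the set of probabilities $\mu$ with $\int\ell\,d\mu=\ell(p)$ for all $\ell\in Y^*$, equivalently with barycentre $p$. It is non-empty ($\delta_p\in P$), convex and weak-$*$ compact, and $\mu\mapsto\int h\,d\mu$ is weak-$*$ continuous, so it attains its maximum over $P$ at some $\mu_0$. Let $A$ be the restrictions to $X$ of the continuous affine functions on $Y$, and for $g\in C(X)$ let $\hat g=\inf\{a\in A:a\ge g\}$ be its upper envelope: $\hat g$ is concave and upper semicontinuous (hence Borel), $\hat g\ge g$, $\hat a=a$ for $a\in A$, the map $g\mapsto\hat g$ is positively homogeneous and subadditive, and $\widehat{-g}=-g$ whenever $g$ is convex (a concave continuous function is the infimum of the continuous affine functions dominating it). The crux is the identity $\int h\,d\mu_0=\int\hat h\,d\mu_0$.

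The hard part is this identity, and the plan there is a Hahn--Banach argument. Suppose $\int h\,d\mu_0<\int\hat h\,d\mu_0$. Then $q(g):=\int\hat g\,d\mu_0$ is sublinear on $C(X)$, and on the line $\mathbb{R}h$ the functional $th\mapsto t\int\hat h\,d\mu_0$ is dominated by $q$ (for $t<0$ this uses $\widehat{-h}=-h$ and the assumed strict inequality). Extending it by Hahn--Banach to $L\le q$ on $C(X)$, one gets from $q(g)\le0$ for $g\le0$ and $q(\pm1)=\pm1$ that $L$ is a positive functional with $L(1)=1$, hence $L=\int\cdot\,d\nu$ for a probability $\nu$; testing $\pm L$ against $a\in A$ gives $\int a\,d\nu=\int a\,d\mu_0$, so $\nu\in P$; while $L(h)=\int\hat h\,d\mu_0>\int h\,d\mu_0$ contradicts maximality of $\mu_0$ over $P$. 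With the identity in hand, the non-negative Borel function $\hat h-h$ has zero $\mu_0$-integral, so $\mu_0(\{h=\hat h\})=1$. Finally $\{h=\hat h\}\subset\operatorname{ex} X$: if $x=\tfrac12(y+z)$ with $y\ne z$, then strict convexity of $h$ and concavity of $\hat h$ give $h(x)<\tfrac12(h(y)+h(z))\le\tfrac12(\hat h(y)+\hat h(z))\le\hat h(x)$. Hence $\mu_0(\operatorname{ex} X)=1$, which is (1), and (2) is the defining property of $P$. Metrisability is used in both preliminaries and is genuinely needed: for non-metrisable $X$, $\operatorname{ex} X$ can fail to be Borel and ``$\mu(\operatorname{ex} X)=1$'' must be weakened to a pseudo-support statement.
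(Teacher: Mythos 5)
The paper does not prove this statement at all: it is quoted as a classical result and simply attributed to Phelps's book, so there is no in-paper argument to compare against. Your proposal is a correct and essentially complete sketch of the standard proof of the metrisable Choquet existence theorem, and all the delicate points are handled properly: $\operatorname{ex} X$ being a $G_\delta$ via the closed sets $F_n$, the construction of a strictly convex $h=\sum 2^{-n}\ell_n^2$ from a countable separating family in $Y^*$, the sublinearity of $g\mapsto\int\hat g\,d\mu_0$, the identity $\widehat{-g}=-g$ for convex continuous $g$ (Hahn--Banach separation), and the inclusion $\{h=\hat h\}\subset\operatorname{ex} X$. Your route differs from Phelps's exposition only in a minor organisational way: Phelps defines the functional $a+rh\mapsto a(p)+r\hat h(p)$ on $A+\mathbb{R}h$, extends it under the sublinear gauge $g\mapsto\hat g(p)$, and then shows the resulting representing measure satisfies $\mu(h)=\mu(\hat h)$; you instead first take $\mu_0$ maximising $\int h$ over the compact set $P$ of measures with barycentre $p$ and use Hahn--Banach only to derive a contradiction from $\int h\,d\mu_0<\int\hat h\,d\mu_0$. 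The two are equivalent in substance ($h$-maximal measures versus direct construction), and your variant has the small advantage of making the extremality mechanism --- maximality of $\int h$ --- explicit from the start. Two cosmetic remarks: for $t<0$ the domination $L_0(th)\le q(th)$ needs only $\int\hat h\,d\mu_0\ge\int h\,d\mu_0$, not the assumed strict inequality; and the extraction of a countable separating subfamily of $Y^*$ is slightly quicker via separability of the subspace of continuous affine functions in $C(X)$ than via second countability of $X$, though your Lindel\"of-type argument is also valid.
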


Choquet's theorem is often used to prove the ergodic decomposition theorem. Indeed, taking $Y$ to be 
$C(M, \mathbb{R})^*$, endowed with the weak* topology and  $X = \M(M)$, 
one obtains

\begin{cor}
 Given any $f$-invariant probability $\mu$, there exists a Borel probability $\hat{\mu}$ on $\M_f(M)$ such that 
\begin{enumerate}
 \item $\hat{\mu}(\M_f^{\text{erg}}(M)) = 1$, 
\item $ \mu = \int_{\M_f^{\text{erg}}(M)} \nu \ d\hat{\mu}(\nu)$.
\end{enumerate}
\end{cor}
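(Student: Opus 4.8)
The plan is to derive this corollary as a direct application of Choquet's theorem, exactly as indicated by the phrase ``one obtains''. First I would set $Y = C(M,\mathbb{R})^*$ endowed with the weak* topology. This is a locally convex topological vector space, and the weak* topology restricted to the unit ball is metrisable because $M$ is a compact metric space and hence $C(M,\mathbb{R})$ is separable. Then I would set $X = \M(M) \subset Y$, the space of Borel probability measures, which is convex, and compact and metrisable in the weak* (= weak) topology by the Banach--Alaoglu theorem together with separability of $C(M,\mathbb{R})$.

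Next I would identify the extreme points of $X$ when we further restrict to the $f$-invariant measures. Actually the cleanest route is to take $X = \M_f(M)$, the set of $f$-invariant Borel probability measures: this is a closed (hence compact) convex subset of $\M(M)$, still metrisable. Its extreme points are precisely the ergodic invariant measures, i.e. $\operatorname{ex} X = \M_f^{\text{erg}}(M)$; this is the standard characterisation of ergodicity via indecomposability of the invariant measure. Applying Choquet's theorem to the point $p = \mu \in X$ yields a Borel probability $\hat{\mu}$ on $X = \M_f(M)$ with $\hat{\mu}(\operatorname{ex} X) = \hat{\mu}(\M_f^{\text{erg}}(M)) = 1$, which is conclusion (1).

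For conclusion (2), Choquet gives $\ell(\mu) = \int \ell(\nu)\, d\hat{\mu}(\nu)$ for every weak*-continuous linear functional $\ell : Y \to \mathbb{R}$. The weak*-continuous linear functionals on $C(M,\mathbb{R})^*$ are exactly the evaluations $\ell_\varphi : \lambda \mapsto \lambda(\varphi) = \int \varphi\, d\lambda$ for $\varphi \in C(M,\mathbb{R})$. Hence for every continuous $\varphi$ we get $\int \varphi\, d\mu = \int \left( \int \varphi\, d\nu \right) d\hat{\mu}(\nu)$, which is precisely the statement that $\mu = \int \nu\, d\hat{\mu}(\nu)$ in the sense of the barycentre/integral representation discussed earlier in the Toolbox section (the defining relation for $\int \vartheta\, d\mu$ with $\vartheta$ the inclusion of $\M_f^{\text{erg}}(M)$). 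Since $\hat{\mu}$ is carried by $\M_f^{\text{erg}}(M)$, the integral may be written over $\M_f^{\text{erg}}(M)$ as stated.

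The only genuinely non-routine point is verifying the two hypotheses of Choquet's theorem and the identification $\operatorname{ex}\M_f(M) = \M_f^{\text{erg}}(M)$; everything else is bookkeeping. Metrisability of $\M_f(M)$ in the weak* topology follows from separability of $C(M,\mathbb{R})$ (which holds since $M$ is compact metric), and compactness from Banach--Alaoglu plus weak*-closedness of the invariant measures. The extreme-point identification is classical: a non-ergodic invariant $\nu$ has an invariant set $A$ with $0 < \nu(A) < 1$, giving the proper convex decomposition $\nu = \nu(A)\,\nu_{|A} + \nu(A^c)\,\nu_{|A^c}$; conversely an ergodic measure admits no such decomposition because any invariant measure absolutely continuous with respect to it must, by ergodicity, equal it. I would state these facts briefly and cite a standard reference rather than prove them in detail.
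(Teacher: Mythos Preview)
Your proof is correct and follows exactly the route the paper indicates: apply Choquet's theorem with $Y = C(M,\mathbb{R})^*$ in the weak* topology. Note that the paper writes $X = \M(M)$, but this is a slip; as you rightly observe, one must take $X = \M_f(M)$ so that $\operatorname{ex} X = \M_f^{\text{erg}}(M)$, and your verification of compactness, metrisability, and the identification of extreme points fills in precisely the details the paper leaves implicit.
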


One may check that $\hat{\mu}$ is unique and given by 
\begin{equation}\label{muhat}
\hat{\mu}(E) = \mu \left( \ \bigcup_{\nu \in E} \B(\nu) \ \right)
 \end{equation}
for every Borel set $E \subset \M(M)$.
The purpose os this section is to prove an analogous result about invariant admissible measures.
\begin{prop}\label{ergdecomp}
Let $\mu$ be an $f$-invariant admissible measure. Then there exists a unique Borel probability $\hat{\mu}$ on $\AM_f$ such that 
\begin{enumerate}
 \item $\hat{\mu}(\AM_f^{\text{erg}}) = 1$ and
\item $\mu = \int_{\AM_f^{\text{erg}}} \nu \ d\hat{\mu}(\nu)$.
\end{enumerate}
\end{prop}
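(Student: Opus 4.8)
The plan is to reduce Proposition~\ref{ergdecomp} to the ordinary ergodic decomposition corollary above, and then to upgrade the almost-every-component statement from ``ergodic'' to ``ergodic \emph{and} admissible''. Fix an $f$-invariant admissible $\mu$, so by Corollary~\ref{compactness} it lies in $\AM_f = \AM(a,C)\cap \M_f(M)$ for suitable $a, C$. The ergodic decomposition corollary furnishes a unique $\hat\mu$ on $\M_f(M)$ with $\hat\mu(\M_f^{\text{erg}}(M))=1$ and $\mu = \int \nu\, d\hat\mu(\nu)$; by the explicit formula \eqref{muhat}, $\hat\mu(E) = \mu(\bigcup_{\nu\in E}\B(\nu))$. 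Everything comes down to showing that $\hat\mu$-almost every ergodic component $\nu$ is itself admissible, for then $\hat\mu$ is automatically supported on $\AM_f^{\text{erg}} = \AM_f \cap \M_f^{\text{erg}}(M)$, giving (1); part (2) is then just the decomposition identity restricted to that full-measure set; and uniqueness is inherited from uniqueness in the Corollary, since any $\hat\mu$ with properties (1)--(2) here is in particular an ergodic decomposition of $\mu$ in $\M_f(M)$.

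So the core task is: if $\mu\in\AM(a,C)$ is $f$-invariant, then $\hat\mu$-a.e.\ ergodic component lies in $\AM(a,C')$ for some $C'$ (or at least in $\AM$). The approach I would take is to transport the decomposition up to the level of lifts, mimicking Tsujii's Lemma~3.14 in \cite{T}. Choose a lift $\umu\in\M(\A(a,C))$ of $\mu$ via $\uiota$. The idea is to apply Choquet's theorem not on $\M(M)$ but in a setting where the extreme points of the relevant compact convex set are forced to be lifts of \emph{ergodic} admissible measures — or, more hands-on, to disintegrate $\umu$ along the partition of $\A$ into fibers over ergodic components and push forward through $\uiota$. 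Concretely: the map $\A \ni (\Gamma,\phi)\mapsto$ ``the ergodic component of $f$ to which $(\Gamma,\phi)$-a.e.\ point belongs'' is measurable into $\M_f^{\text{erg}}(M)$ (this uses that simple admissible measures are absolutely continuous along carriers, so Birkhoff averages converge $(\Gamma,\phi)$-a.e.\ and the limit measure is a.e.\ constant on the appropriate sets, by the same mechanism used in the proof of Theorem~A). Disintegrating $\umu$ over this map gives $\umu = \int \umu_\nu\, d\hat\mu(\nu)$ with each $\umu_\nu$ supported on $\A(a,C)$ and concentrated on carriers whose generic point is $\nu$-generic; then $\uiota(\umu_\nu)$ is an $f$-invariant admissible measure (in $\AM(a,C)$ by compactness, since $\umu_\nu\in\M(\A(a,C))$) all of whose points are $\nu$-generic, whence $\uiota(\umu_\nu)=\nu$. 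Thus $\nu\in\AM(a,C)$ for $\hat\mu$-a.e.\ $\nu$, which is exactly what is needed.

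The main obstacle I expect is the measurability and well-definedness of the disintegration step: one must check that the assignment of an ergodic component to a simple admissible measure is Borel measurable as a map $\A(a,C)\to \M_f^{\text{erg}}(M)$, which requires knowing that for each $(\Gamma,\phi)$ the Birkhoff averages $\frac1n\sum \delta_{f^k(x)}$ converge for $(\Gamma,\phi)$-a.e.\ $x$ to a measure that does not depend on $x$ (mod null sets) — the first part is Birkhoff's theorem applied to the ergodic decomposition of $f$ together with absolute continuity of $(\Gamma,\phi)$ along $\Gamma$ and the fact that $m|_\Gamma$-a.e.\ point has a well-defined basin, but extracting a genuinely $\A$-measurable selection (rather than just an a.e.-defined one) needs a little care, e.g.\ via a countable dense family of test functions $\varphi\in C^0(M)$ and Fubini on $\umu$. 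A secondary, more technical point is verifying that each $\uiota(\umu_\nu)$ is ergodic and equals $\nu$ rather than merely being $\ll$-absolutely-continuous with respect to it; this follows because its basin contains a full $\uiota(\umu_\nu)$-measure set of $\nu$-generic points, forcing equality with the ergodic measure $\nu$. Uniqueness of $\hat\mu$ needs no new argument beyond the uniqueness already recorded for the ordinary ergodic decomposition.
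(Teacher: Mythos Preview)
Your reduction to the ordinary ergodic decomposition is fine, and the target statement ``$\hat\mu$-a.e.\ ergodic component is admissible'' is exactly the right thing to aim for. The gap is in the disintegration step. The map
\[
\A \ni (\Gamma,\phi) \longmapsto \text{``the ergodic component to which $(\Gamma,\phi)$-a.e.\ point belongs''}
\]
is \emph{not} well-defined, even $\umu$-almost everywhere. A single carrier can meet the basins of several ergodic components, each with positive $(\Gamma,\phi)$-measure; think of Kan-type intermingled basins, where this happens for a Lebesgue-typical carrier. Nothing in the hypothesis that $\umu$ lifts $\mu$ forces the function $(\Gamma,\phi)\mapsto (\Gamma,\phi)(\B(\nu))$ to be $\{0,1\}$-valued, so there is no map into $\M_f^{\text{erg}}(M)$ over which to disintegrate $\umu$. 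Your flagged obstacle was measurability, but the real obstruction is one level below: the carriers are simply not subordinate to the partition of $U$ into basins, and restricting a simple admissible measure to $\B(\nu)$ does not yield an element of $\A$.

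The paper sidesteps this by applying Choquet directly to the compact convex set $\AM_f$, after proving the key lemma that its extreme points are exactly $\AM_f^{\text{erg}}$. The substantive direction (non-ergodic $\Rightarrow$ non-extreme) is precisely the repair for your gap: given an invariant set $E$ with $0<\mu(E)<1$, one shows $\mu_{\vert E}\in\AM_f$ not by cutting carriers, but by strongly $\epsilon$-approximating each $(\Gamma,\phi)_{\vert E}$ by genuine admissible measures, integrating over $\umu$, Ces\`aro-averaging under $f_*$, and using compactness of $\AM_f$ to pass to the limit as $\epsilon\to 0$. That approximation-plus-averaging trick is the missing idea in your outline; once you have it, either route (Choquet on $\AM_f$, or your disintegration-style argument reformulated at the level of $\mu_{\vert\B(\nu)}$) goes through.
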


Again, $\hat{\mu}$ must be given by (\ref{muhat}), so it is unique. The nontrivial part of the statement is that $\hat{\mu}$ thus defined satisfies $\hat{\mu}(\AM_f^{\text{erg}}) = 1$. Proposition \ref{ergdecomp} is an immediate corollary of Choquet's theorem and the following characterisation of ergodic admissible measures.

\begin{lemma}[Cf. proof of Lemma 3.14 in \cite{T}]
 The set of extreme points of $\AM_f$ is precisely the set $\AM_f^{\text{erg}}$ 
of ergodic admissible measures.
\end{lemma}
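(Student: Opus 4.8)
The plan is to prove the two inclusions separately, using the compactness and convexity of $\AM_f$ established above together with the standard ergodic-theoretic characterisation of extreme points of $\M_f(M)$. Throughout I identify admissible measures with their image in $\M(M)$, so that $\AM_f = \AM(a,C) \cap \M_f(M)$ is a compact convex metrisable subset of the locally convex space $C^0(M,\mathbb{R})^*$ (with the weak$^*$ topology), by Corollary~\ref{compactness}.

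First I would show $\AM_f^{\text{erg}} \subset \operatorname{ex} \AM_f$. Suppose $\mu \in \AM_f^{\text{erg}}$ and write $\mu = t\mu_1 + (1-t)\mu_2$ with $0<t<1$ and $\mu_1,\mu_2 \in \AM_f$. Then $\mu_1 \ll \mu$, so by ergodicity of $\mu$ (as an $f$-invariant measure on $M$, in the usual sense) we get $\mu_1 = \mu$; hence $\mu_2 = \mu$ as well, and $\mu$ is extreme. This direction uses only that membership in $\AM_f$ is preserved under the decomposition, which is automatic since $\mu_1,\mu_2$ are already assumed admissible.

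The substantive direction is $\operatorname{ex}\AM_f \subset \AM_f^{\text{erg}}$: an extreme point of $\AM_f$ must be ergodic as a measure on $M$. I would argue by contraposition. Suppose $\mu \in \AM_f$ is not ergodic; then there is an $f$-invariant Borel set $B$ with $0 < \mu(B) < 1$, and the normalised restrictions $\mu_B := \mu(B)^{-1}\mu|_B$ and $\mu_{B^c} := \mu(B^c)^{-1}\mu|_{B^c}$ are distinct $f$-invariant probabilities with $\mu = \mu(B)\mu_B + \mu(B^c)\mu_{B^c}$. The point I must establish is that $\mu_B$ and $\mu_{B^c}$ are themselves \emph{admissible}; once that is known, $\mu$ fails to be extreme in $\AM_f$. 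This is where the construction of the lift is used, exactly as in Lemma~3.14 of \cite{T}: take a lift $\umu \in \M(\A)$ of $\mu$, push the decomposition of $M$ back to the level of simple admissible measures, and check that the pieces one obtains are again (integrals of) simple admissible measures. Concretely, for $\umu$-almost every $(\Gamma,\phi)$ the set $\Gamma \cap B$ is a measurable subset of $\Gamma$, and the normalised restriction $(\Gamma,\phi)|_{\Gamma \cap B}$ — when it has positive mass — is again a convex combination (via Proposition~\ref{suppA1}, or directly) of simple admissible measures on subcarriers; integrating the resulting lifts against $\umu$, suitably renormalised on $\{(\Gamma,\phi): (\Gamma,\phi)(B)>0\}$, produces a lift of $\mu_B$, and similarly for $\mu_{B^c}$. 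Thus both $\mu_B,\mu_{B^c} \in \AM_f$, contradicting extremality.

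The main obstacle is the measurability and renormalisation bookkeeping in this last step: one must check that the fibrewise restriction $(\Gamma,\phi) \mapsto (\Gamma,\phi)|_{\Gamma\cap B}$ depends measurably on $(\Gamma,\phi)$, that the total mass $\int (\Gamma,\phi)(B)\,d\umu(\Gamma,\phi)$ equals $\mu(B)$ (Fubini, via (\ref{fubini})), and that after dividing by this mass one genuinely obtains an element of $\M(\A)$ whose $\uiota$-image is $\mu_B$. A minor additional subtlety is that restricting a carrier to $\Gamma \cap B$ need not give a carrier (it is not a round disc), so one cannot restrict literally; instead one invokes Proposition~\ref{suppA1} to first approximate, or observes that $\AM$ is closed under the relevant operations because the restricted density is still bounded — the key quantitative input being that restriction does not destroy the uniform density bounds of Lemma~\ref{Lip1}. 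Once the decomposition is seen to stay inside $\AM_f$, the lemma follows, and Proposition~\ref{ergdecomp} is then immediate from Choquet's theorem applied to the compact convex metrisable set $X = \AM_f$ with $Y = C^0(M,\mathbb{R})^*$, together with the identification (\ref{muhat}) of the representing measure.
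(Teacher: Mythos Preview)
Your overall structure and the easy inclusion $\AM_f^{\text{erg}} \subset \operatorname{ex}\AM_f$ match the paper. The gap is in the hard direction, precisely at the point you flag as the ``main obstacle'' but do not resolve.

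You offer two options for showing $\mu_B \in \AM_f$: either appeal to Proposition~\ref{suppA1}, or argue that restriction preserves the density bounds of Lemma~\ref{Lip1}. The second option is simply false: the density $\phi\,\chi_B$ vanishes on $\Gamma\setminus B$, so the lower bound is destroyed, and $(\Gamma,\phi)\vert_B$ is \emph{not} a simple admissible measure. Nor is there any reason for it to lie in $\AM$ at all when $B\cap\Gamma$ is an arbitrary Borel set (think of a Cantor set of positive measure in $\Gamma$). The first option is the right instinct, but it gives you only a strong $\epsilon$-approximation $\nu \in \AM$ of $\mu_B$ for each $\epsilon>0$; these approximants do not sit in a fixed compact $\AM(a,C)$ as $\epsilon\to 0$, so you cannot pass to a limit inside $\AM$ and conclude $\mu_B$ is admissible.

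What the paper does, and what your argument is missing, is to exploit the $f$-invariance of $\mu_B$ to force the approximants back into a compact set. Given the strong $\epsilon$-approximant $\nu\in\AM$, one forms the Ces\`aro averages $\frac{1}{n}\sum_{k=0}^{n-1}f_*^k\nu$; by Proposition~\ref{likePeSi} any weak accumulation point $\nu_\infty$ lies in $\AM_f$ (with the uniform $C$). Since $f_*$ is a strong contraction and $\mu_B$ is $f$-invariant, one still has $\|\nu_\infty-\mu_B\|_s\leq\epsilon$. Thus $\mu_B$ lies in the strong closure of the \emph{compact} set $\AM_f$, hence in $\AM_f$ itself. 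This averaging-plus-compactness step is the genuine content you have not supplied.
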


\begin{proof}
It is clear that if $\mu$ belongs to $\AM_f^{\text{erg}}$, then it is an extreme point of $\AM_f(M)$. Indeed, since $\mu$ cannot be written as a nontrivial convex combination of distinct measures in $\M_f(M)$, it certainly cannot be written as a nontrivial convex combination of distinct measures in $\AM_f$.
Conversely, suppose that $\mu$ in $\AM_f$ is not ergodic, say $f^{-1}(E) = E$ and $0< \mu(E)<1$.
Choose some lift $\umu$ of $\mu$ and fix $\epsilon>0$ arbitrarily. Given any simple admissible measure 
$(\Gamma, \phi)$ such that $(\Gamma, \phi)(E)>0$, we may find an admissible measure $\nu_{(\Gamma, \phi)}$ which $\epsilon$-approximates $(\Gamma, \phi)_{\vert E}$ in the strong topology. Let $\unu_{(\Gamma, \phi)}$ be lifts of such. Then
\[\nu := \int \unu_{(\Gamma, \phi)} d\umu(\Gamma, \phi) \]
is a strong $\epsilon$-approximation of $\mu_{\vert E}$.

Any accumulation point $\nu_{\infty}$ of the sequence $\frac{1}{n} \sum_{k=0}^{n-1} f_*^k \nu$ is admissible. Furthermore, since $\mu_{\vert E}$ is $f$-invariant we have that 
$\|\nu_{\infty} - \mu_{\vert E} \|_s \leq \epsilon$. As $\epsilon$ is arbitrarily small, it follows that
\[ \inf_{\nu \in \AM_f} \| \nu - \mu_{\vert E} \|_s = 0 \]
so, by compactness of $\AM_f$, $\mu_{\vert E}$ is admissible.
\end{proof}

\subsubsection{Generic Carriers} \label{generic}

Given any $\mu\in \M(M)$ we say that a point $x$ is generic for $\mu$ if it is contained in the basin $\B(\mu)$. Similarily, a carrier $\Gamma$ is $\mu$-generic if $(\Gamma, 1)$-almost every point 
$x \in \Gamma$ is generic for $\mu$. Let $\mu$ be an admissible measure. Then by definition, there exists $\umu \in \M(\A)$ such that 
\begin{equation}\label{integralrep}
 \mu(\B(\mu)) = \int (\Gamma, \phi)(\B(\mu)) \ d\umu(\Gamma, \phi).
\end{equation}
Now suppose $\mu$ is $f$-invariant ergodic, so that $\mu(\B(\mu)) = 1$. Then the representation (\ref{integralrep}) implies that $(\Gamma, \phi) (\B(\mu))= 1$ for $\umu$-almost every $(\Gamma, \phi) \in \A$. In particular, $\mu$-generic carriers exist.

\subsection{Stable manifolds}

We briefly review Pesin's work \cite{Pe} on stable manifolds, an indispensible tool in smooth ergodic theory. He proves almost everywhere existence of stable manifolds with respect to any hyperbolic invariant measure for $C^{1+\alpha}$ maps ($\alpha >1$). Later it has been remarked that having stable manifolds is a pointwise property, depending on non-uniform hyperbolicity along a given orbit, i.e. one does not have to mention any invariant measure in order to talk about stable manifolds. Almost everywhere existence is a consequence of Oseledet's Theorem \cite{O}. Moreover, stable manifolds may be constructed for quite general sequences of diffeomorphisms (see \cite{BP}); in particular the theory works fine for local diffeomorphisms. Here we state a weak form of Pesin's theorem which, nevertheless, is quite sufficient for our needs.
We shall write $\mathcal{N}$ for the set of points $x$ in $U$ for which $\lambda_+^c(x) < 0$.

\begin{thm}\label{stablemfds}
 There exists a measurable function 
$r:\mathcal{N} \rightarrow (0, \infty)$ satisfying 
\begin{equation}\label{subexponential}
\lim_{n \rightarrow \infty} \frac{1}{n} \log r(f^n(x)) = 0, 
\end{equation}
and $C^1$ maps $\Psi_x : E^c_x (r(x)) \rightarrow E^u_x$, such that
 the submanifolds 
\[W_{loc}(f; x) := \exp_x \Graph \Psi_x\] satisfy:
\begin{enumerate}
\item $d(f^n(x), f^n(y))\rightarrow 0$ exponentially fast as $ n \rightarrow \infty$ for every $y \in W_{loc}(f; x)$,
 \item $T_y W_{loc} (f; x) = E^c(y)$ at any $y\in W_{loc} (f; x), \ x \in \mathcal{N}$,
\item $f(W_{loc}(f; x)) \subset W_{loc}(f; f(x))$.
\end{enumerate}
\end{thm}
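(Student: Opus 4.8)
The plan is to reduce Theorem~\ref{stablemfds} to the classical Pesin stable manifold theory, as reformulated for sequences of maps in \cite{BP}, by carefully setting up the right non-uniformly hyperbolic data along each orbit in $\mathcal{N}$. First I would fix $x\in\mathcal{N}$ and consider the sequence of differentials $Df_{f^k(x)}$ acting on the fixed splitting $T_{f^k(x)}M = E^u_{f^k(x)}\oplus E^c_{f^k(x)}$ (which is well defined and continuous on $U$ by the partial hyperbolicity hypothesis, even though $E^u$ is not invariant --- but $E^c$ \emph{is} invariant, and that is what matters for the stable direction). Along the orbit of $x$ the domination estimate (\ref{domination}) gives uniform expansion of $E^u$ dominating $E^c$, while the hypothesis $\lambda_+^c(x)<0$ gives $\limsup_{n}\frac1n\log\|D^cf^n(x)\|<0$. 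Passing to the subadditive/Oseledec framework (or just directly using the $\limsup$), one extracts, for any small $\epsilon>0$, a tempered function along the orbit controlling the non-uniformity, so that the orbit of $x$ is a non-uniformly hyperbolic string in the sense of \cite{BP}: contraction rates in $E^c$ bounded by $e^{-\chi+\epsilon}$ for some $\chi>0$, expansion in $E^u$, angle between the bundles bounded below by a tempered function, and $C^{1+\alpha}$ bounds on $f$ (here $\alpha$ comes for free since $f$ is $C^2$).

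Next I would invoke the graph-transform / Hadamard--Perron construction from \cite{BP} (or the local version of Pesin theory) to produce, for each $x\in\mathcal{N}$, a local stable manifold $W_{loc}(f;x)$ of the claimed form $\exp_x\Graph\Psi_x$ with $\Psi_x:E^c_x(r(x))\to E^u_x$ of class $C^1$, tangent to $E^c_x$ at $x$, with $r(x)>0$ measurable. The three asserted properties then follow from the standard output of that machinery: exponential contraction of forward orbits of points in $W_{loc}(f;x)$ is the defining property of the stable manifold; $T_yW_{loc}(f;x)=E^c(y)$ follows because the stable manifold is tangent everywhere to the stable subbundle, which along the forward orbit must coincide with the invariant central bundle $E^c$ (this uses that $E^c$ is the unique bundle on which $Df^n$ fails to eventually enter the unstable cone, established in the partial hyperbolicity discussion); and the inclusion $f(W_{loc}(f;x))\subset W_{loc}(f;f(x))$ is the usual (forward) invariance, provided the radius function is chosen compatibly along the orbit --- which is exactly what the sub-exponential decay condition (\ref{subexponential}) encodes. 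The temperedness $\frac1n\log r(f^n(x))\to 0$ is part of the conclusion of the construction: $r$ is built from the tempered non-uniformity function, hence itself tempered.

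The only point requiring care --- and the main obstacle --- is the non-invertibility of $f$. Classical Pesin theory is stated for diffeomorphisms, and stable manifolds are in principle a forward-in-time notion, so non-invertibility is not a genuine obstruction: one only needs the forward orbit $x,f(x),f^2(x),\dots$ and the forward differentials, never an inverse. I would spell this out explicitly, pointing to the sequence-of-maps formulation in \cite{BP} which is designed precisely to handle such situations; the fact that $f$ is merely a local diffeomorphism means each $Df_{f^k(x)}$ is invertible as a linear map, which is all the graph transform needs. A secondary technical point is that the ``unstable'' complement $E^u$ used to write the graph is not $Df$-invariant, so strictly speaking the stable manifold should be described via the invariant bundle $E^c$ and an arbitrary fixed (smooth) complement $E^u$; since we only ask for $W_{loc}$ tangent to $E^c$ and expressed as a graph over $E^c$, this causes no difficulty, but I would remark on it to avoid confusion with the standard setup. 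Measurability of $r$ and of $x\mapsto\Psi_x$ follows from the measurable dependence of the Oseledec/tempered data on the point, as in \cite{Pe, O}.

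Almost everywhere existence with respect to any hyperbolic invariant measure, mentioned in the statement's preamble, is then immediate from Oseledec's theorem \cite{O} applied to $D^cf$ on such a measure, but since the theorem as stated is purely pointwise on $\mathcal{N}$, I would not dwell on it.
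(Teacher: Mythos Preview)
Your proposal is correct and matches the paper's treatment exactly: the paper does not give an independent proof of Theorem~\ref{stablemfds} but simply states it as a weak form of Pesin's theorem, citing \cite{Pe} and the sequence-of-maps formulation in \cite{BP} precisely to cover the local-diffeomorphism case. Your sketch of how the reduction goes---forward-orbit data only, $E^c$ invariant so the graph-transform applies over a non-invariant smooth complement $E^u$, temperedness of $r$ coming from the tempered non-uniformity function---is an accurate unpacking of what those references provide, and there is nothing to add.
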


\subsection{Absolute continuity}

We fix a function $r: \mathcal{N} \rightarrow (0, \infty)$ as in Theorem \ref{stablemfds} so that we obtain a family 
$\mathcal{W} = \{W_{loc}(x): x \in \mathcal{N}\}$ of local stable leaves. Given two carriers $\Gamma_1, \Gamma_2$, let $\mathcal{D}(h_{\Gamma_1, \Gamma_2})$ be the domain $\{p \in \Gamma_1: W_{loc}(p)\cap \Gamma_2 \neq \emptyset \} \subset \Gamma_1$. It is understood that 
$r$ is small enough so that every carrier intersect every local 
stable manifold in \emph{at most} one point. Thus we may define the holonomy map
\begin{eqnarray*}
 h_{\Gamma_1, \Gamma_2} : \mathcal{D}(h_{\Gamma_1, \Gamma_2}) &\rightarrow& \Gamma_2 \\
p &\mapsto&  W_{loc}(p) \cap \Gamma_2.
\end{eqnarray*}
It is clear that 
\[(\Gamma_1, 1)(\mathcal{D}(h_{\Gamma_1, \Gamma_2})) \rightarrow 1 \textrm{ as } \Gamma_2 \overset{\mathbf{K}}{\rightarrow} \Gamma_1.\] 
Since the local stable manifolds are open discs, the condition $W_{loc}(p)\cap \Gamma_2 \neq \emptyset$ is robust under small perturbations of $\Gamma_2$. Consequently, the map 
$\Gamma_2 \mapsto (\Gamma_1, 1)(\mathcal{D}(h_{\Gamma_1, \Gamma_2}))$ is lower semi-continuous.

Let $\mu$ be the restriction of $(\Gamma_1, 1)$ to $\mathcal{D}(h_{\Gamma_1, \Gamma_2})$ and $\nu$ the restriction of $(\Gamma_2, 1)$ to $h_{\Gamma_1, \Gamma_2}(\mathcal{D}(\Gamma_1, \Gamma_2))$ (but not normalised). We define the Jacobian of $h_{\Gamma_1, \Gamma_2}$ by the Radon-Nikodym derivative
\[\Jac(h_{\Gamma_1, \Gamma_2}) = \frac{d(h_{\Gamma_2, \Gamma_2})_*^{-1} \nu}{d \mu}.\]

\begin{thm}[Absolute Continuity \cite{P, BP}] \ 

 \begin{enumerate}
  \item All holonomy maps are absolutely continuous, i.e. $h_{\Gamma_1, \Gamma_2}$ sends zero 
$(\Gamma_1, 1)$-measure sets into zero $(\Gamma_2, 1)$-measure sets.
\item There is a uniform constant $C>0$ such that 
\[\vert \Jac(h_{\Gamma_1, \Gamma_2}) - 1\vert \leq C d^{\K}(\Gamma_1, \Gamma_2).\]
 \end{enumerate}
\end{thm}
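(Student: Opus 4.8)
The plan is to follow the classical scheme of Pesin's absolute continuity theorem (\cite{P}, \cite{BP}), adapted to the non-invertible cone-field setting, the key point being an explicit infinite-product formula for the holonomy Jacobian. Fix carriers $\Gamma_1,\Gamma_2$, a point $p$ in the domain of $h_{\Gamma_1,\Gamma_2}$, and set $q=h_{\Gamma_1,\Gamma_2}(p)\in W_{loc}(p)\cap\Gamma_2$. Since $f$ maps local stable leaves into local stable leaves (Theorem \ref{stablemfds}(3)), for each $n$ one gets a holonomy map $h_n$ between the connected pieces of $f^n(\Gamma_1)$ and $f^n(\Gamma_2)$ lying near the orbit segments of $p$ and $q$, and $h_{\Gamma_1,\Gamma_2}=f^{-n}\circ h_n\circ f^n$ on a small neighbourhood of $p$ (here $f^n$ restricted to a small enough piece of $\Gamma_i$ around the relevant point is injective, as $f|_{\Gamma_i}$ is an immersion, so the composition makes sense in spite of the non-invertibility of $f$). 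Because $q\in W_{loc}(p)$, Theorem \ref{stablemfds}(1) gives $d(f^n(p),f^n(q))\to 0$ exponentially, so for large $n$ the two pieces of $f^n(\Gamma_i)$ are $C^1$-close --- they are $K_0$-Lipschitz discs (Proposition \ref{curvature}) through exponentially close points, with tangent spaces in an exponentially thin iterate of the unstable cone --- whence $h_n$ is $C^1$-close to the identity and $\Jac(h_n)\to 1$ uniformly. Applying the chain rule to $h_{\Gamma_1,\Gamma_2}=f^{-n}\circ h_n\circ f^n$ and letting $n\to\infty$ yields
\[
\Jac(h_{\Gamma_1,\Gamma_2})(p)\;=\;\prod_{k=0}^{\infty}\frac{\bigl|\det Df_{\vert T_{f^k(p)}f^k(\Gamma_1)}\bigr|}{\bigl|\det Df_{\vert T_{f^k(q)}f^k(\Gamma_2)}\bigr|}.
\]

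To make this rigorous and extract the Lipschitz estimate I would establish three quantitative ingredients, with constants uniform on a $C^2$-neighbourhood of $f$. First, since $f$ is $C^2$, the map $(x,H)\mapsto\log\bigl|\det(Df_x\vert_H)\bigr|$ is Lipschitz over the unstable cone bundle with some constant $L$ --- the same kind of distortion constant that appears in the proof of Lemma \ref{Lip1}. Second, the domination hypothesis (\ref{domination}) forces $Df^k_x(S^u_x)$ to be a cone of angular width at most a constant times $\tau^{k-n_0}$, uniformly in $x$; combined with $d(f^k(p),f^k(q))\to 0$ this makes the angle between $T_{f^k(p)}f^k(\Gamma_1)$ and $T_{f^k(q)}f^k(\Gamma_2)$, compared in a common chart, exponentially small. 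Third, $q$ lies on $W_{loc}(p)$, and when $\Gamma_1,\Gamma_2$ are $d^{\K}$-close the $E^c$-displacement from $p$ to $q$ is at most a constant times $d^{\K}(\Gamma_1,\Gamma_2)$, and the contraction along $W_{loc}(p)$ propagates this to $d(f^k(p),f^k(q))\le C'\tau_0^{\,k}\,d^{\K}(\Gamma_1,\Gamma_2)$ for a suitable $\tau_0<1$. Feeding these into the product, the logarithm of the $k$-th factor is bounded by $L$ times (a constant times $\tau_0^k d^{\K}(\Gamma_1,\Gamma_2)$ plus the angular term), so $\sum_k\log(\text{$k$-th factor})$ converges absolutely and $\bigl|\log\Jac(h_{\Gamma_1,\Gamma_2})(p)\bigr|\le C\,d^{\K}(\Gamma_1,\Gamma_2)$. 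In particular $\Jac(h_{\Gamma_1,\Gamma_2})$ is bounded away from $0$ and $\infty$, which gives part (1) (the holonomy and its inverse carry null sets to null sets) and, via $|e^t-1|\le C't$ for bounded $t$, the inequality $|\Jac-1|\le C d^{\K}$ of part (2). For carriers that are not $d^{\K}$-close, $d^{\K}(\Gamma_1,\Gamma_2)$ is bounded below while $\Jac$ stays uniformly bounded by the same product, so part (2) holds after enlarging $C$.

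The main obstacle is the third ingredient: a genuinely uniform bound on the tail sum $\sum_k d(f^k(p),f^k(q))$. The contraction along Pesin stable manifolds in the mostly contracting regime is non-uniform, so a priori $d(f^k(p),f^k(q))\le C(p)\tau_0^k d^{\K}(\Gamma_1,\Gamma_2)$ with a point-dependent constant $C(p)$. This is exactly the delicate point handled in the classical absolute continuity theorems of \cite{P, BP}: one controls $C(p)$ on each Pesin block, where it is uniform, and the resulting Jacobian bound is uniform there; I would invoke that theory directly, observing that the only genuinely new feature here --- non-invertibility --- is harmless because the whole argument proceeds by forward iteration together with the local injectivity of $f^n$ on small pieces of admissible manifolds. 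The remaining steps (the $C^1$-convergence $h_n\to\mathrm{id}$, the chain-rule identity, and the cone-width estimate from (\ref{domination})) are routine given the Toolbox already developed, notably Proposition \ref{curvature}.
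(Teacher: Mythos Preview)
The paper does not prove this theorem at all: it is stated with attribution to \cite{P, BP} and immediately followed by its corollaries, with no proof environment in between. So there is nothing to compare your argument against --- the paper treats absolute continuity of the Pesin stable holonomy as a black box imported from the literature.

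That said, your outline is the standard one from those references: conjugate the holonomy through $f^n$, use domination and the $C^2$ hypothesis to get the infinite-product formula for the Jacobian, and sum a geometric series controlled by the contraction along stable leaves. You also correctly flag the real subtlety, namely that the contraction rate along $W_{loc}(p)$ is governed by a Pesin-block constant $C(p)$ rather than a uniform one, so the bound in part (2) as literally stated (a single $C$ valid for all $p$ in the domain) is really a bound that is uniform on each regular set and measurable in $p$. For the paper's purposes this is harmless --- the only consequences drawn are the qualitative lower semi-continuity Corollaries, which need nothing more than absolute continuity with Jacobian bounded away from $0$ and $\infty$ on sets of full measure --- but if you were writing this up independently you would want to state part (2) with that caveat, exactly as \cite{BP} does.
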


Let $\mathcal{F}$ be any measurable union of local stable manifolds, e.g. $\mathcal{F} = \mathcal{N}$, or $\mathcal{F} = \mathcal{N} \cap \B(\mu)$ for some physical measure $\mu$.

\begin{cor}
 The map
\[ \A \ni (\Gamma, \phi) \mapsto (\Gamma, \phi)(\F) \in \mathbb{R} \]
 is lower semi-continuous.
\end{cor}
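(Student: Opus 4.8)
The plan is to reduce the Corollary to the preceding Absolute Continuity theorem together with the lower semi-continuity of the domain-of-holonomy map already noted in the text. Fix a carrier $\Gamma_0$ and a density $\phi_0$ with $(\Gamma_0,\phi_0)\in\A$; I want to show that for every $\epsilon>0$ there is a $\K$-neighbourhood of $\Gamma_0$ and a weak-$L^2$ neighbourhood of $\phi_0$ such that $(\Gamma,\phi)(\F)\geq (\Gamma_0,\phi_0)(\F)-\epsilon$ for all nearby $(\Gamma,\phi)$. The point is that $\F$ is a union of local stable leaves, so it is ``holonomy-saturated'': if $p\in\Gamma_0$ lies in $\F$ and $W_{loc}(p)$ meets a nearby carrier $\Gamma$, then the holonomy image $h_{\Gamma_0,\Gamma}(p)$ also lies in $\F$. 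Hence the holonomy map $h_{\Gamma_0,\Gamma}$ sends $\mathcal{D}(h_{\Gamma_0,\Gamma})\cap\F$ into $\Gamma\cap\F$.

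The key steps, in order. First I would reduce to a fixed carrier $\Gamma_0=\Gamma$ and vary only the density; indeed since the map splits into a ``geometric'' part (varying $\Gamma$) and a ``density'' part (varying $\phi$ on a fixed fibre $L_w^2(\Gamma)$), it suffices to treat these separately and combine via the bundle charts $\Psi,\Phi$ of Section \ref{topology}. For the density part: if $\phi_n\to\phi$ weakly in $L_w^2(\Gamma)$ and $\chi_{\F\cap\Gamma}\in L^2(\Gamma)$, then $\int_{\F}\phi_n\,d(\Gamma,1)\to\int_{\F}\phi\,d(\Gamma,1)$, so $(\Gamma,\phi)\mapsto(\Gamma,\phi)(\F)$ is actually \emph{continuous} in $\phi$ for fixed $\Gamma$; this uses only that $\chi_{\F\cap\Gamma}$ is bounded hence square-integrable, which needs measurability of $\F\cap\Gamma$, guaranteed since $\F$ is a measurable union of stable leaves. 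Second, for the geometric part: given $\Gamma$ near $\Gamma_0$, push the estimate
\begin{equation*}
(\Gamma,\phi)(\F)\;\geq\;(\Gamma,\phi)\bigl(h_{\Gamma_0,\Gamma}(\mathcal{D}(h_{\Gamma_0,\Gamma})\cap\F)\bigr)
\end{equation*}
and change variables back to $\Gamma_0$ using absolute continuity and the Jacobian bound $\lvert\Jac(h_{\Gamma_0,\Gamma})-1\rvert\leq C\,d^{\K}(\Gamma_0,\Gamma)$, obtaining a lower bound of the form $\bigl(1-C\,d^{\K}(\Gamma_0,\Gamma)\bigr)$ times $(\Gamma_0,\phi\circ h)(\mathcal{D}(h_{\Gamma_0,\Gamma})\cap\F)$. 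Third, let $\Gamma\to\Gamma_0$: the domain satisfies $(\Gamma_0,1)(\mathcal{D}(h_{\Gamma_0,\Gamma}))\to 1$, so after also matching the density (the density transported from $\Gamma$ to $\Gamma_0$ via the bundle chart converges, and is uniformly bounded if we work inside a regularity level $\A(a,C)$, so the contribution of $\Gamma_0\setminus\mathcal{D}$ to the integral is uniformly small), the right-hand side converges to $(\Gamma_0,\phi_0)(\F)$. Combining the geometric and density estimates yields $\liminf(\Gamma,\phi)(\F)\geq(\Gamma_0,\phi_0)(\F)$, which is exactly lower semi-continuity.

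The main obstacle I anticipate is the bookkeeping needed to let $\Gamma$ and $\phi$ vary \emph{simultaneously} while keeping the density uniformly controlled: weak-$L^2$ convergence alone does not bound $\phi$ pointwise, so to make the ``contribution of the small set $\Gamma_0\setminus\mathcal{D}(h_{\Gamma_0,\Gamma})$ is negligible'' argument work one must restrict attention to sequences $(\Gamma_n,\phi_n)$ inside some fixed $\A(a,C)$ — which is harmless since $\A$ is exhausted by the nested compacta $\A(a,C)$ and lower semi-continuity is a local property, testable on sequences. A secondary technical point is verifying the holonomy-saturation claim precisely near the boundary of the domain $\mathcal{D}(h_{\Gamma_0,\Gamma})$ and checking that $h_{\Gamma_0,\Gamma}$ maps into $\Gamma$ rather than its closure, but this is controlled by the openness of the local stable discs, already invoked in the text when asserting lower semi-continuity of $\Gamma_2\mapsto(\Gamma_1,1)(\mathcal{D}(h_{\Gamma_1,\Gamma_2}))$. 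Everything else is a routine change-of-variables built on the two parts of the Absolute Continuity theorem.
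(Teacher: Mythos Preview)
Your approach is the natural one and matches what the paper leaves implicit: the corollary is stated without proof precisely because it is meant to follow from the two ingredients you isolate --- holonomy saturation of $\F$ together with the Jacobian bound from the Absolute Continuity theorem, and the fact that $(\Gamma_0,1)(\mathcal{D}(h_{\Gamma_0,\Gamma}))\to 1$ as $\Gamma\to\Gamma_0$. Your decomposition into a density part (continuous by weak-$L^2$ pairing with $\chi_{\F\cap\Gamma}$) and a geometric part (controlled by holonomy) is exactly right.

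There is one genuine slip in your obstacle paragraph. You propose to restrict to sequences lying in a fixed $\A(a,C)$ and justify this by saying ``$\A$ is exhausted by the nested compacta $\A(a,C)$ and lower semi-continuity is a local property, testable on sequences''. This reasoning is incorrect: testability on sequences does not let you choose \emph{which} sequences to test on, and a sequence $(\Gamma_n,\phi_n)\to(\Gamma_0,\phi_0)$ in $\A$ can perfectly well have $\|\phi_n\|_\infty\to\infty$, so it need not sit in any fixed regularity level. Fortunately the restriction is unnecessary. Weak convergence $\phi_n\to\phi_0$ in $L^2$ forces $\sup_n\|\phi_n\|_2<\infty$ by the uniform boundedness principle, and then Cauchy--Schwarz gives
\[
\int_{E}\phi_n\,d(\Gamma_0,1)\;\leq\;\|\phi_n\|_2\,\sqrt{(\Gamma_0,1)(E)},
\]
which already makes the contribution of $E=\Gamma_0\setminus\mathcal{D}(h_{\Gamma_0,\Gamma_n})$ vanish as $n\to\infty$. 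The same $L^2$ bound, combined with the fact that the reparametrisation (bundle chart composed with holonomy) converges to the identity with Jacobian tending to $1$, handles the comparison between $\phi_n\circ h_{\Gamma_0,\Gamma_n}$ and the chart-transported density: pair against a test function and use that translation acts strongly continuously on $L^2$. With this correction your argument goes through on all of $\A$, not just on $\A(a,C)$.
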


It is a general fact that if $\varphi:X\rightarrow \mathbb{R}$ is a lower (upper) semi-continuous function on some probability space $X$, so is $\M(X) \ni \mu \mapsto \int \varphi d\mu \in \mathbb{R}$. Applied to the current context this becomes:

\begin{cor}
 The map 
\[\M(\A) \ni \umu \mapsto \uiota(\umu)(\F) \in \mathbb{R} \label{lsc2} \]
is lower semi-continuous.
\end{cor}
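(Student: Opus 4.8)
\section*{Proof proposal}

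The statement is an immediate consequence of the general semi-continuity principle quoted just above it, once the two ingredients are identified; the plan is to spell these out. First I would make the principle precise and recall why it holds: if $X$ is a topological space on which every bounded lower semi-continuous function is the pointwise supremum of a family of bounded continuous functions dominated by it --- this is automatic, for instance, for metric spaces, or more generally for Tychonoff spaces --- and $\varphi : X \to \mathbb{R}$ is bounded and lower semi-continuous, then $\M(X) \ni \mu \mapsto \int \varphi \, d\mu$ is lower semi-continuous for the weak topology. Indeed, writing $\varphi = \sup_{\alpha} f_{\alpha}$ with $f_{\alpha} \in C^0_b(X)$, $f_{\alpha} \le \varphi$, and the family directed upward by finite maxima, one has $\int \varphi \, d\mu = \sup_{\alpha} \int f_{\alpha} \, d\mu$ for every probability $\mu$ (monotone convergence, using that $\varphi$ is bounded); each $\mu \mapsto \int f_{\alpha}\, d\mu$ is weakly continuous, and a supremum of continuous functions is lower semi-continuous.

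Next I would check the hypotheses in the present situation. The relevant space is $X = \A$, equipped with the topology constructed in Section \ref{topology}: it is covered by the compact metrizable strata $\A(a,C)$, so the mild regularity needed for the principle is in place (in particular no compactness of $\A$ itself is required). The relevant function is $\varphi(\Gamma,\phi) := (\Gamma,\phi)(\F)$. Since $\F$ is a measurable union of local stable leaves, $(\Gamma,\phi)(\F)$ is well defined and takes values in $[0,1]$, hence is bounded; its measurability in $(\Gamma,\phi)$ is exactly the one obtained by dominated approximation of $\chi_{\F}$ by continuous functions, as in the discussion following (\ref{fubini}); and it is lower semi-continuous on $\A$ by the preceding corollary.

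Finally, (\ref{fubini}) gives, for every $\umu \in \M(\A)$,
\[
\uiota(\umu)(\F) = \int_{\A} (\Gamma,\phi)(\F)\, d\umu(\Gamma,\phi) = \int_{\A} \varphi \, d\umu ,
\]
so the map $\umu \mapsto \uiota(\umu)(\F)$ coincides with $\umu \mapsto \int \varphi \, d\umu$, which is lower semi-continuous by the first step. I do not expect a genuine obstacle here: the only points deserving a moment's care are confirming that the general principle uses nothing about $\A$ beyond the regularity that is automatic from its construction, and that $\varphi$ is honestly measurable --- both of which are already in hand from earlier in the text.
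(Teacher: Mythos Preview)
Your proposal is correct and follows precisely the route the paper takes: the paper does not give a separate proof but simply invokes the general fact (stated immediately before the corollary) that if $\varphi:X\to\mathbb{R}$ is lower semi-continuous then $\mu\mapsto\int\varphi\,d\mu$ is lower semi-continuous, applied with $\varphi(\Gamma,\phi)=(\Gamma,\phi)(\F)$ from the preceding corollary and the identity $\uiota(\umu)(\F)=\int\varphi\,d\umu$ from (\ref{fubini}). Your write-up is in fact more detailed than the paper's, supplying the justification of the general principle and the measurability check that the paper leaves implicit.
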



\section{Finitude of physical measures and the no holes property}

Having developed the necessary tools, we are now ready to begin the proof of Theorem A. Although it may appear rather different, our proof resembles that of \cite{BV} in spirit, simply replacing Gibbs-$u$ states with admissible measures. Still there is one profound difference: we do not employ the technique of Lebesgue density points when proving the no holes property. 

\begin{proof}[Proof of Theorem A]

Recall the statement: Every mostly contracting system has a finite number of physical measures and the union of their basins of attraction cover Lebesgue almost every point in the trapping region $U$. We have seen in section \ref{ergodic} that every $f$ in $MC$ has some ergodic admissible measure. 
The proof has three phases:
\begin{enumerate}
 \item Every ergodic admissible measure is also a physical measure.
\item There are finitely many ergodic measures, say $\AM_f^{\text{erg}} = \{\mu_1, \ldots, \mu_N\}$.
\item The combined basin $\B(\mu_1) \cup \ldots \cup \B(\mu_N)$ has full Lebesgue measure in $U$. In particular, there is no room for yet another physical measure.
\end{enumerate}

Let $\mu$ be any ergodic admissible measure and $\umu$ a lift of $\mu$. Recall that we denote by $\mathcal{N}$ the set of points in $U$ whose maximum central Lyapunov exponent are negative. The mostly contracting hypothesis implies that $(\Gamma, \phi)(\mathcal{N}) > 0$ for every simple admissible measure $(\Gamma, \phi)$. Hence
\[\mu(\mathcal{N}) = \int_{\A} (\Gamma, \phi)(\mathcal{N}) d\umu(\Gamma,\phi) >0.\]
The set $\mathcal{N}$ is $f$-invariat by definition, so it follows by ergodicity of $\mu$ that 
\[\mu(\mathcal{N}) = \int_{\A}(\Gamma, \phi)(\mathcal{N})d\umu(\Gamma, \phi) = 1.\] 
Thus $\umu$-almost every simple admissible measure satisfies $(\Gamma, \phi)(\mathcal{N}) = 1$. We have seen in Section \ref{generic} that $\umu$-almost every $(\Gamma, \phi)$ is $\mu$-generic. In particular, there exists some carrier $\Gamma$ such that $(\Gamma, 1)(\B(\mu)\cap \mathcal{N}) = 1$. It follows from absolute continuity that 
\[A:=\bigcup_{x \in \B(\mu)\cap \mathcal{N}} W_{loc}(f; x)\] 
has positive Lebesgue measure. As $A$ is a subset of $\B(\mu)$, it follows that $\mu$ is a physical measure.

Next we show that $\AM_f^{\text{erg}}$ is finite. Since every ergodic admissible measure is a physical measure, there can be at most a countable number of them. If there were to be infinitely many ergodic admissible measures, say $\mu_1, \mu_2, \ldots$ then there would exist some sequence $\mu_{n_j}$ of distinct physical measures converging to some measure $\mu$. By compactness of $\AM_f$, $\mu$ must be admissible. Indeed, if $\umu_{n_j}$ are lifts of the $\mu_{n_j}$ in some $\M(\A(a, C))$ (see Proposition \ref{likePeSi}), then any accumulation point $\umu$ of $\umu_{n_j}$ is a lift of $\mu$. 

Writing $\alpha_n = \mu(\B(\mu_n))$, the ergodic decompositon of $\mu$ takes the form
$\mu = \sum_{n=0}^{\infty} \alpha_n \mu_n$. Now let $\B_n = \B(\mu_n) \cap \mathcal{N}$ for every $n \geq1$. Since $\mathcal{N}$ has full measure with respect to any $f$-invariant probability, we have $\mu_n (\B_n)=1$ and $\alpha_n = \mu(\B_n)$ for every $n$. Pick one $k$ such that $\alpha_k>0$. Using  Corollary \ref{lsc2} we obtain
\[\liminf_{j \rightarrow \infty} \uiota (\umu_{n_j})(\B_k) \geq \uiota (\umu)(\B_k) = \mu(\B_k) = \alpha_k>0.\]
But this is absurd since $\mu_{n_j}(\B_k)=0$ unless $n_j = k$, which can certainly be true for at most one value of $j$.

Let $\AM_f^{\text{erg}}=\{\mu_1,\ldots, \mu_N\}$. To complete the proof of Theorem A it remains to prove that these are the only physical measures supported in the trapping region $U$, and that their combined basin 
$\mathcal{B}(\mu_1)\cup \ldots \cup \mathcal{B}(\mu_N)$
has full Lebesgue
measure in $U$. But of course the former follows from the latter. Fix therefore some small $\epsilon >0$ and pick some $\nu_0 \in \AM$ with $\| \nu_0 - m_{\vert U} \|_s < \epsilon$. Here, $m_{\vert U}$ denotes the normalised restriction of Lebesgue measure to the trapping region. Let $\unu_0$ be any lift of $\nu_0$. We denote by $\nu_n$ the averaged sums of pushforwards of $\nu_0$, and $\unu_n$ their lifts given by the construction in section \ref{technique}:

\begin{equation*}
\begin{CD}
\M(\mathbf{A}) \ni \unu_0 @ >  \frac{1}{n}\sum_{k=0}^{n-1}\Xi_{(f^k, a)} >> \unu_n \in \M(\mathbf{A}) \\
@V \uiota VV       @VV \uiota V \\
\M(M)\ni \nu_0 @ >  \frac{1}{n} \sum_{k=0}^{n-1} f_*^k  >> \nu_n \in \M(M)
\end{CD}
\end{equation*}

Let $\nu$ be an accumulation point of $\nu_n$. Then there is some subsequence $\unu_{n_j}$ of $\unu_n$, converging to a lift $\unu$ of $\nu$. Since $\nu$ is admissible, it has an ergodic composition of the form
\[
\nu = \alpha_1 \mu_1 + \ldots + \alpha_N \mu_N. \\
\]
By ergodicity, $\mu_i (\B(\mu_i)) = 1$ for each $1\leq i \leq N$. Hence
\[\nu(\B(\mu_1)\cup \ldots \cup \B(\mu_N)) = 1.\]
We have already seen that $\mu(\mathcal{N}) = 1$ for every ergodic admissible measure. Hence $\nu(\mathcal{N})=1$ as well. Let $\F = (\B(\mu_1)\cup \ldots \cup \B(\mu_N)) \cap \mathcal{N}$. 
Since $\F$ is a union of local stable manifolds, it follows from Corollary \ref{lsc2} that 
\[\liminf_{j \rightarrow \infty} \nu_{n_j}(\F)
= \liminf_{j \rightarrow \infty} \uiota(\unu_{n_j})(\F)
\geq \uiota(\unu)(\F) = \nu(\F) = 1.
\]

By invariance of $\F$, $\nu_0(\F)=1$, so that  
\[m_{\vert U} (\B(\mu_1) \cup \ldots \cup \B(\mu_N)) > 1-\epsilon\] as required.
\end{proof}

\section{Robustness and statistical stability}

The goal of this section is to prove Theorem B. The first part (openness of $MC$) is obtained through a characterisation of the mostly contracting hypothesis in terms of negative integrated central Lyapunov exponents for invariant admissible measures. The remaining part of Theorem B requires some estimates on the sizes of stable manifolds, and will be dealt with separately.

\subsection{A characterisation of the mostly contracting hypothesis}\label{character}

The definition of maximum central Lyapunov exponent given in section \ref{mostcont} is naturally modified to take arguments in the space of invariant measures. Recall the set $S = \{(f, \mu) \in \PH \times \M(M): \mu \in \AM_f \}$.

\begin{defn}
 The integrated maximum central Lyapunov exponent is the map
\begin{eqnarray*}
 \hat{\lambda}_+^c: S & \rightarrow & \mathbb{R} \\
(f, \mu) & \mapsto & \int \lambda_+^c d\mu.
\end{eqnarray*}
\end{defn}

\begin{prop}\label{alternative}
A partially hyperbolic system $f$ is
mostly contracting along the central direction if and only if 
the integrated maximum central Lyapunov exponent is negative on any admissible invariant measure.
\end{prop}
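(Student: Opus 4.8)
The plan is to funnel both implications through the set $\mathcal{N}=\{x\in U:\lambda_+^c(f,x)<0\}$. Note first that $\lambda_+^c(f,\cdot)$ is a Borel function and is constant along orbits — $\|D^cf^n(fx)\|$ and $\|D^cf^{n+1}(x)\|$ differ by a factor uniformly bounded away from $0$ and $\infty$ — so $\mathcal{N}$ is Borel and $f$-invariant, and, since $f$ is a local diffeomorphism and hence nonsingular, one checks at once that $f$ is mostly contracting \emph{if and only if} $(\Gamma,1)(\mathcal{N})>0$ for every carrier $\Gamma$. Indeed a carrier is in particular a $C^{1+\Lip}$ disc tangent to $S^u$, which gives one direction; conversely, by Proposition \ref{curvature} every $C^{1+\Lip}$ disc $D$ tangent to $S^u$ has a forward iterate with $K_0$-Lipschitz tangent bundle, hence contains a carrier $\Gamma=f^n(D')$ with $D'\subset D$, and pulling $\Gamma\cap\mathcal{N}$ back through the locally invertible map $f^n|_{D'}$ — using $f$-invariance of $\mathcal{N}$ — transfers positive Lebesgue measure from $\Gamma$ to $D$.

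For the forward implication, assume $f$ is mostly contracting. Exactly as in the proof of Theorem A, $(\Gamma,1)(\mathcal{N})>0$ for every carrier together with the lower bound on densities gives $(\Gamma,\phi)(\mathcal{N})>0$ for every simple admissible measure, so every admissible invariant $\mu$ with lift $\umu$ satisfies $\mu(\mathcal{N})=\int(\Gamma,\phi)(\mathcal{N})\,d\umu(\Gamma,\phi)>0$. Now write the ergodic decomposition of $\mu$ into admissible ergodic measures, $\mu=\int\nu\,d\hat\mu(\nu)$, via Proposition \ref{ergdecomp} (there are in fact only finitely many such $\nu$, by Theorem A). Each such $\nu$ again has $\nu(\mathcal{N})>0$, hence $\nu(\mathcal{N})=1$ by ergodicity and $f$-invariance of $\mathcal{N}$; and because $\lambda_+^c(f,\cdot)$ is an invariant Borel function it equals $\nu$-a.e. the constant $\int\lambda_+^c\,d\nu$, which must be negative since $\{\lambda_+^c<0\}$ carries full $\nu$-mass. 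As the integrand $\nu\mapsto\int\lambda_+^c\,d\nu$ is thus strictly negative $\hat\mu$-almost everywhere, $\hat{\lambda}_+^c(f,\mu)=\int\lambda_+^c\,d\mu<0$.

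For the converse, suppose $\int\lambda_+^c\,d\mu<0$ for every admissible invariant $\mu$ and fix a carrier $\Gamma$. Let $\mu_\Gamma$ be a weak accumulation point of $\frac1n\sum_{k=0}^{n-1}f_*^k(\Gamma,1)$; it is $f$-invariant and, by Proposition \ref{likePeSi}, admissible. By hypothesis $\int\lambda_+^c\,d\mu_\Gamma<0$, which forces $\mu_\Gamma(\mathcal{N})>0$ because $\lambda_+^c\geq0$ off $\mathcal{N}$. The delicate point is that $\lambda_+^c$ is merely a $\limsup$ of continuous functions, so $\mathcal{N}$ is neither open nor closed, and although $f_*^k(\Gamma,1)(\mathcal{N})=(\Gamma,1)(\mathcal{N})$ for every $k$ by invariance of $\mathcal{N}$, this value cannot be related to $\mu_\Gamma(\mathcal{N})$ by portmanteau alone. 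The remedy is that $\mathcal{N}$ is a measurable union of local stable manifolds, so that Corollary \ref{lsc2} applies: taking a lift $\umu_\Gamma$ of $\mu_\Gamma$ realised as a limit of the corresponding averages $\frac1n\sum_{k=0}^{n-1}\Xi_{(f^k,a)}\,\delta_{(\Gamma,1)}$ — so that $\uiota$ intertwines these with $\frac1n\sum f_*^k$ — lower semicontinuity yields
\[
(\Gamma,1)(\mathcal{N})=\liminf_{n\to\infty}\tfrac1n\textstyle\sum_{k=0}^{n-1}f_*^k(\Gamma,1)(\mathcal{N})\ \geq\ \uiota(\umu_\Gamma)(\mathcal{N})=\mu_\Gamma(\mathcal{N})>0.
\]
Since $\Gamma$ was an arbitrary carrier, the reformulation in the first paragraph shows $f$ is mostly contracting.

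The step I expect to be the real obstacle is exactly the one isolated in the converse: the lack of any semicontinuity of $\lambda_+^c$ as a function on $U$, which blocks the naive weak-convergence argument that $(\Gamma,1)(\mathcal{N})=0$ should propagate to $\mu_\Gamma$. Getting around it is precisely what the absolute continuity of stable holonomies — packaged as the lower semicontinuity of $\umu\mapsto\uiota(\umu)(\mathcal{F})$ for unions $\mathcal{F}$ of local stable manifolds — is there to provide; the remaining ingredients (the intertwining property of the lift operators $\Xi_{(f^k,a)}$, and the invariance of $\mathcal{N}$ and of $\lambda_+^c$) are routine bookkeeping.
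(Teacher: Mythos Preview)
Your proof is correct and follows essentially the same route as the paper: both directions hinge on the invariant set $\mathcal{N}$, the forward implication uses the ergodic decomposition of admissible measures (Proposition \ref{ergdecomp}) to upgrade $\mu(\mathcal{N})>0$ to $\mu(\mathcal{N})=1$, and the converse pushes $(\Gamma,1)$ forward, lifts via $\Xi_{(f^k,a)}$, and invokes the lower semi-continuity of Corollary \ref{lsc2} on $\mathcal{N}$ to pull positivity back to the carrier. Your write-up is in fact a bit more explicit than the paper's in two places --- the reduction from general $C^{1+\Lip}$ discs to carriers, and the identification of why portmanteau alone fails --- but the architecture is identical.
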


\begin{proof}
The `only if' was implicitly dealt with in the proof of Theorem A. Indeed, given any admissible $\mu$, 
we may write 
\[\mu(\mathcal{N}) = \int (\Gamma, \phi)(\mathcal{N})\ d\underline{\mu}(\Gamma, \phi)\] 
for some lift $\umu$ of $\mu$. Under the mostly contracting hypothesis we have
$(\Gamma, \phi)(\mathcal{N})>0$ for every $(\Gamma, \phi)\in \mathbf{A}$, so $\mu(\mathcal{N}) >0$. Now, $\mathcal{N}$ is an $f$-invariant set, so if $\mu$ is ergodic, then $\mu(\mathcal{N}) = 1$. If not, it decomposes into ergodic admissible measures, so 
\[
 \mu(\mathcal{N}) = \int_{\AM_f^{\text{erg}}} \nu(\mathcal{N}) \ d\hat{\mu}(\nu) = 1.
\]
Hence $\hat{\lambda}_+^c(\mu) = \int \lambda_+^c d\mu = \int_{\mathcal{N}} \lambda_+^c d\mu < 0$ as required.

To prove the converse, choose an arbitrary $C^{1+\Lip}$ disc $D \subset 
U$, transversial to $E^c$. Given any point $p\in D$, there is some $n \geq 0$ such that $f^n(D)$ is tangent to $S^u$ at $f^n(p)$. Moreover, provided that $n$ is large enough, there is some neighbbourhood $N$ of $p$ such that $N$ is an admissible manifold. In particular, $N$ contains some carrier $\Gamma$. By invariance of $\mathcal{N}$, it suffices to show that $(\Gamma, 1)(\mathcal{N})>0$. 

Let $\unu_0 = \delta_{(\Gamma, 1)}$ and for $n \geq 1$ define 
\[\unu_n = \frac{1}{n} \sum_{k=0}^{n-1} \Xi_{(f^k, a)} \unu_0, \quad \nu_n = \uiota (\unu_n).\]
Again, by invariance of $\mathcal{N}$, it suffices to show that $\nu_n(\mathcal{N})>0$ for some $n \geq 0$.
Choose some convergent subsequence $\unu_{n_j} \rightarrow \unu$ and denote 
$\uiota( \unu)$ by $\nu$. We have $\nu \in \AM_f$ so, by hypothesis, $\int \lambda_+^c d\nu <0$. Applying Corollary \ref{lsc2} yields $\iota(\unu_{n_j})(\mathcal{N})>0$ for every large value of $j$, so $\nu_{n_j}(\mathcal{N})>0$ as required.
\end{proof}

\subsection{Semi-continuity of Lyapunov exponents}

It is clear that when $E^c$ is one-dimensional, $\hat{\lambda}_+^c: S \rightarrow \mathbb{R}$ is continuous. In general this property may fail, due to interaction between several central directions. Still, it does satisfy a semi-continuity property which is well sufficient for our needs.

\begin{lemma}\label{semicont}
The integrated maximum central Lyapunov exponent 
$\lambda_+^c : S
\rightarrow \mathbb{R}$ is upper semi-continuous.
\end{lemma}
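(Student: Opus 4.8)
The plan is to show that if $(f_i,\mu_i)\to(f,\mu)$ in $S$, then $\limsup_i \hat\lambda_+^c(f_i,\mu_i)\le \hat\lambda_+^c(f,\mu)$. The key observation is that the integrand $\lambda_+^c$ is realised as a \emph{decreasing limit} of continuous functions: for each $n\ge 1$ consider
\[
\ell_n(f,x)=\frac{1}{n}\log\|D^cf^n(x)\|,
\]
which depends continuously on the pair $(f,x)\in\PH(U,S^u)\times U$ (here one uses that the central bundle $E^c$ varies continuously with the map, so $D^cf^n$ does too). By submultiplicativity of the operator norm along the cocycle, $\|D^cf^{m+n}(x)\|\le \|D^cf^m(f^n(x))\|\,\|D^cf^n(x)\|$, so the sequence $n\mapsto \int \ell_n(f,\cdot)\,d\nu$ is subadditive for every $f$-invariant $\nu$, and by Fekete's lemma $\inf_n \int\ell_n\,d\nu=\lim_n\int\ell_n\,d\nu$. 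By Kingman's subadditive ergodic theorem this common value equals $\int\lambda_+^c\,d\nu$. Thus
\[
\hat\lambda_+^c(f,\nu)=\inf_{n\ge 1}\int \ell_n(f,x)\,d\nu(x),
\]
an infimum of functionals each of which \emph{is} continuous in $(f,\nu)$.

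Granting this, the lemma is immediate: an infimum of continuous functions is upper semi-continuous. Concretely, fix $\varepsilon>0$ and choose $n$ with $\int\ell_n(f,\cdot)\,d\mu<\hat\lambda_+^c(f,\mu)+\varepsilon$. Since $(g,\nu)\mapsto\int\ell_n(g,\cdot)\,d\nu$ is continuous at $(f,\mu)$ — the map $g\mapsto \ell_n(g,\cdot)$ is continuous into $C^0(U)$ because $E^c_g$ varies continuously and $f$ is $C^1$ on the relevant neighbourhood, and $\nu\mapsto\int\varphi\,d\nu$ is weakly continuous for fixed continuous $\varphi$ — we get $\int\ell_n(f_i,\cdot)\,d\mu_i<\hat\lambda_+^c(f,\mu)+\varepsilon$ for all large $i$. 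But $\hat\lambda_+^c(f_i,\mu_i)\le\int\ell_n(f_i,\cdot)\,d\mu_i$ by the infimum characterisation (applied to the invariant measure $\mu_i$ of $f_i$), so $\limsup_i\hat\lambda_+^c(f_i,\mu_i)\le\hat\lambda_+^c(f,\mu)+\varepsilon$, and $\varepsilon$ is arbitrary.

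The main obstacle is the continuity of $(g,x)\mapsto \ell_n(g,x)$ in the pair: this is where one must invoke that the central subbundle $E^c_g$ of a partially hyperbolic map depends continuously on $g$ in the $C^0$ sense (stated in Section \ref{mostcont}), so that $x\mapsto D^cg^n(x)$ converges uniformly as $g\to f$, and hence $\log\|D^cg^n(\cdot)\|\to\log\|D^cf^n(\cdot)\|$ uniformly on the common trapping region. A secondary point needing care is that $\mu_i$ is $f_i$-invariant rather than $f$-invariant, so the equality $\hat\lambda_+^c(f_i,\mu_i)=\inf_n\int\ell_n(f_i,\cdot)\,d\mu_i$ must be applied with the correct dynamics; this is fine since Kingman's theorem and Fekete's lemma only use invariance of the measure under the map whose cocycle is being averaged. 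One could alternatively bypass Kingman's theorem entirely by simply \emph{defining} $\hat\lambda_+^c$ via this infimum from the start — which by (\ref{expansion}),(\ref{domination}) and dominated convergence agrees with $\int\lambda_+^c\,d\mu$ — making the upper semi-continuity a one-line consequence.
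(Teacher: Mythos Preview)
Your proof is correct and follows essentially the same route as the paper's: both arguments fix $\varepsilon>0$, pick $N$ so that $\tfrac{1}{N}\int\log\|D^cf^N\|\,d\mu<\hat\lambda_+^c(f,\mu)+\varepsilon$, use continuity of $(g,\nu)\mapsto\tfrac{1}{N}\int\log\|D^cg^N\|\,d\nu$ to pass to nearby pairs, and then bound $\hat\lambda_+^c(g,\nu)$ above by this time-$N$ average via submultiplicativity. The only cosmetic difference is that you package the last step as the infimum characterisation $\hat\lambda_+^c=\inf_n\int\ell_n\,d\nu$ (invoking Fekete and Kingman), whereas the paper writes out the submultiplicativity estimate by hand along blocks of length $N$ and uses $g$-invariance of $\mu_g$ directly.
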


\begin{proof}
Fix $\epsilon > 0$ arbitrarily and take $N$ large so that 
\[
 \frac{1}{N} \int \log \|Df^N \vert_{E^c}\| d\mu < \lambda_+^c (f,\mu) +
\epsilon.
\]
Choose thereafter a neighbourhood $\mathcal{V}$ of $(f, \mu)$ in $S$, small
enough for
\[
 \frac{1}{N} \int \log \|Dg^N \vert_{E_g^c}\| d\mu_g < \lambda_+^c (f, \mu) +
\epsilon
\]
to hold for any pair $(g, \mu_g)\in \mathcal{V}$. We have
\begin{eqnarray*}
 \lambda_+^c(g, \mu_g) = \limsup_{n \rightarrow \infty} &=&
\frac{1}{n} \int \log \| Dg^n \vert_{E_g^c} \| d\mu_g \\
& \leq & \lim_{k \rightarrow \infty} \sum_{j=0}^{k-1}
\int \frac{1}{N} \log \|Dg^N \vert_{E_g^c} (g^{jN}(x))\| d\mu_g (x) \\
& \leq & \lambda_+^c (\mu) + \epsilon
\end{eqnarray*}
which proves the lemma.
\end{proof}

\begin{proof}[Proof of Theorem B, part 1]

Using the characterisation of the mostly contracting hypothesis given by proposition \ref{alternative}, we find that 
\[MC = \{f \in \PH: \hat{\lambda}_+^c(\mu) < 0 \quad \forall \mu \in \AM_f\}.\]
Pick some $f\in MC$. By compactness of $\AM_f$ (Proposition \ref{compactness}) and semi-continuity of $\hat{\lambda}_+^c$ (Lemma \ref{semicont}), there is a finite collection 
$\{\mathcal{U}_i \times \mathcal{V}_i \}_{i=1}^n \subset \PH \times \M(M)$ on which $\hat{\lambda}_+^c$ is negative, and such that 
$\bigcup_{i=1}^n \mathcal{U}_i \times \mathcal{V}_i \supset \AM_f$. Let $\mathcal{U} = \bigcap_{i=1}^n 
\mathcal{U}_i$. Since $S$ is closed (Proposition \ref{closed}), we have 
\[ S \cap (\mathcal{U} \times \M(M)) \subset \bigcup_{i=1}^n \mathcal{U}_i \times \mathcal{V}_i.\]
Hence $\hat{\lambda}_+^c$ is negative on $\AM_g$ for every $g\in \mathcal{U}$.
\end{proof}

\subsection{Large stable manifolds}\label{large}

The proof of the semi-continuity of the number of physical measures, as a function on $MC$ (part 2 of Theorem B), relies on certain estimates of the sizes of stable manifolds. The idea is to show that the basin of each ergodic admissible measrue is, to a certain extent, foliated by rather large stable manifolds; and that, as a consequence of this, no other ergodic admissible measure is allowed to lie very near it, lest their basins intersect.

Theorem \ref{stablemfds} announces the existence of an \emph{invariant} family of local stable manifolds associated to points in $\mathcal{N}$. However, when dealing with basins of measures, what one really cares about are the stable sets
\[W(f;x) = \{y : d(f^n(y), f^n(x)) \rightarrow 0 \text{ as } n \rightarrow \infty\}.\]
For if $x$ is in the basin of some measure $\mu$, so is the whole of $W(f;x)$. But we do not know very well how $W(f;x)$ looks in general. All we know is that if $x \in \mathcal{N}$, then $W(f;x)$ contains some small embedded disc $W_{loc}(f;x)$.

Let $K>0$ and define $L_K(f)$ to be the set of points $x \in U$ for which $W(f; x)$ contains a disc of radius $K$, centred at $x$.

\begin{lemma}\label{size}
 Suppose $f \in MC$. Then there are positive constants 
$K, \theta$, and a $C^2$-neighbourhood 
$\mathcal{U}$ of $f$ such that $\mu(L_K(g))\geq \theta $
for every $g\in \mathcal{U}$ and $\mu\in \AM_g$.
\end{lemma}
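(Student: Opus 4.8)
The statement we want is a uniform lower bound: for $f \in MC$, there exist $K, \theta > 0$ and a $C^2$-neighbourhood $\mathcal{U}$ of $f$ so that $\mu(L_K(g)) \geq \theta$ for all $g \in \mathcal{U}$, $\mu \in \AM_g$. The heart of the matter is that having a stable manifold of definite size is, after waiting a definite number of iterates, a condition that depends only on finite-time contraction data along the central direction, and that data is robust. So the strategy splits into three movements: (i) turn "negative maximum central Lyapunov exponent" into a uniform finite-time estimate that holds on a fixed positive-measure set; (ii) feed that finite-time estimate into the Pesin/graph-transform machinery (Theorem~\ref{stablemfds} and the constructions behind it) to get stable discs of a fixed radius $K$; (iii) make everything uniform over the neighbourhood $\mathcal{U}$ using the characterisation $MC = \{f : \hat\lambda_+^c(\mu)<0 \ \forall \mu \in \AM_f\}$ together with compactness of $\AM_f$ and the semi-continuity results already proven (Lemma~\ref{semicont}, Proposition~\ref{closed}, Proposition~\ref{admismeascomp}).

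\textbf{Step 1: a uniform finite-time contraction estimate.} By Proposition~\ref{alternative} and Part~1 of Theorem~B (already proved), there is a neighbourhood $\mathcal{U}_0$ of $f$ on which $\hat\lambda_+^c(\mu) < 0$ for every $g \in \mathcal{U}_0$ and $\mu \in \AM_g$. I want to upgrade this to: there exist $N \geq 1$, $\eta > 0$, $\beta \in (0,1)$, and a neighbourhood $\mathcal{U} \subset \mathcal{U}_0$ such that for every $g \in \mathcal{U}$ and $\mu \in \AM_g$,
\[
\mu\Bigl(\bigl\{x : \|D^c g^N(x)\| \leq \beta \bigr\}\Bigr) \geq \eta .
\]
To get this, note that $\frac1N \int \log\|D^c g^N\|\, d\mu \to \hat\lambda_+^c(g,\mu)$ decreasingly (subadditivity), so for $N$ large the left side is bounded above by some fixed $-2\delta < 0$ uniformly over $\mathcal{U} \times \AM_g$; this uniformity is exactly what Lemma~\ref{semicont} (upper semi-continuity of $\hat\lambda_+^c$ on the closed set $S$) plus compactness of $\AM_f$ (Corollary~\ref{compactness}) and a covering argument give, just as in the proof of Theorem~B part~1. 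Once $\frac1N\int \log\|D^cg^N\|\,d\mu \leq -2\delta$, a Markov/Chebyshev inequality on the function $\log\|D^cg^N\|$ (which is bounded above by $N\log\sup\|Dg\|$, uniformly on $\mathcal{U}$) forces the set where $\log\|D^cg^N\| \leq -\delta N$ to have $\mu$-measure at least some $\eta > 0$ depending only on $\delta$, $N$ and the uniform upper bound. Set $\beta = e^{-\delta N}$.

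\textbf{Step 2: from the finite-time estimate to stable discs of fixed size.} On the set $Z_g = \{x : \|D^c g^N(x)\| \leq \beta\}$ I claim $W(g;x)$ contains a disc of some fixed radius $K$, for every $x$ in a further positive-$\mu$-measure subset. This is where the Pesin theory behind Theorem~\ref{stablemfds} enters: the size $r(x)$ of the local stable manifold is controlled from below by the first time the orbit exhibits a definite uniform contraction rate together with a bound on the non-uniformity accumulated before that time. Because a single application of $g^N$ already contracts $E^c$ by the definite factor $\beta<1$ (and domination~(\ref{domination}) controls the competition with the unstable direction), one can run the graph-transform argument over the sequence of maps $g^N, g^N, \ldots$ restricted to orbits that return often enough to $Z_g$; but in fact, since we only need a \emph{positive} measure set and not a.e.\ point, it is cleaner to argue: by Birkhoff for $g^N$, for $\mu$-a.e.\ $x$ the frequency of visits of $\{x, g^N x, g^{2N}x, \ldots\}$ to $Z_g$ is at least $\eta$ (using $g^N$-invariance of $\mu$ up to the standard averaging, or passing to an ergodic component where it is exactly the measure of $Z_g$). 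Standard Pesin-block estimates then yield a measurable function $\underline r > 0$ with $\mu(\{\underline r \geq K\}) \geq \eta/2$ for a suitable fixed $K > 0$; the constants in these estimates depend only on $\beta$, $N$, $\tau$, $n_0$, the curvature bound $K_0$, and the $C^2$-size of $g$, hence are uniform on $\mathcal{U}$. Since $W_{loc}(g;x) \subset W(g;x)$, this places $x \in L_K(g)$, so $\mu(L_K(g)) \geq \eta/2 =: \theta$.

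\textbf{The main obstacle.} The genuinely delicate point is Step~2: extracting a stable disc of \emph{uniform} radius $K$ — uniform both in $x$ and, crucially, in $g \in \mathcal{U}$ — from the finite-time contraction datum. The Pesin-theoretic lower bound on stable manifold size is usually stated with constants that depend on the full invariant measure and on a.e.\ point in an uncontrolled measurable way; here one must instead track exactly which quantities control $r(x)$ and verify they reduce to the fixed finite-horizon data $(\beta, N)$ plus the ambient partial-hyperbolicity and curvature constants, all of which survive small $C^2$-perturbation. Once this bookkeeping is done carefully (essentially redoing the stable-manifold construction over the blocks between consecutive visits to $Z_g$, and using domination to absorb the central growth outside those visits), the rest is the semi-continuity-and-compactness routine already established in this paper.
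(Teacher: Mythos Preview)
Your Step~1 and the overall architecture are fine and essentially match the paper. The gap is in Step~2. Knowing that the set $Z_g = \{x : \|D^c g^N(x)\| \leq \beta\}$ has $\mu$-measure $\geq \eta$, or even that orbits visit $Z_g$ with frequency $\geq \eta$, does \emph{not} by itself produce a set of uniform positive measure on which the stable disc has radius $\geq K$ with $K$ independent of $g$ and $\mu$. Between visits to $Z_g$ the central derivative can be as large as $\sup\|D^c g^N\|$, and domination~(\ref{domination}) controls only the ratio $\|D^c g^N\|\cdot\|(D^u g^N)^{-1}\|$, not the absolute size of $\|D^c g^N\|$; so your remark about using domination to absorb the growth outside $Z_g$ is misdirected. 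Appealing to ``standard Pesin-block estimates'' does not help either: Pesin blocks exhaust the measure but their constants depend on the measure and the block index, giving no uniformity across $g\in\mathcal U$ and $\mu\in\AM_g$.

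What is missing is precisely Pliss' Lemma. The paper defines $H(g)$ to be the set of $x$ such that $\prod_{j=0}^{n-1}\|D^c g^N(g^{jN}x)\| \leq e^{nN(\lambda+3\epsilon)}$ for \emph{every} $n\geq 1$; Pliss' Lemma, applied to the sequence $a_j = \frac1N\log\|D^c g^N(g^{jN}x_0)\|$ along a Birkhoff-generic orbit, produces a positive \emph{density} (depending only on $\epsilon$, $\lambda$, and a lower bound for $a_j$) of times at which the forward partial sums are uniformly controlled, and Birkhoff then converts density into $\mu(H(g)) \geq \theta$ with $\theta$ uniform. Once you have $H(g)$, the inclusion $H(g)\subset L_K(g)$ is elementary: for $x\in H(g)$ the map $g^N$ contracts a fixed neighbourhood of the orbit in the central direction by a fixed factor $\sigma<1$ at every step, so one chooses $j$ with $K\sigma^j < r(g^{jN}x)$ and pulls back a stable disc of radius $K\sigma^j$ through $j$ injective branches of $g^N$ to get a disc of radius $K$ at $x$. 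No Pesin-block bookkeeping is needed at this stage.
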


The proof of Lemma \ref{size} is a fairly direct consequence of an auxiliary result regarding the existence of a large set of points with uniformly hyperbolic behaviour. As a consequence of Lemma \ref{semicont}, we may fix some small neighbourhood $\mathcal{U}$ of $f$ and a number $\lambda<0$ such that 
\begin{equation*}
 \hat{\lambda}_+^c(g,\mu)<\lambda < 0
\end{equation*}
for every $g \in \mathcal{U}$ and $\mu \in \AM_g(M)$. We also fix some $\epsilon$ small enough that $\lambda+4\epsilon <0$, and $N$ large so that 
\[\int \frac{1}{N} \log \|D^cf^N(x)\|d\mu(x)<\lambda+\epsilon <0.\]
Let $H(g)$ be the set of points $x \in M$ such that 
\[
		\prod_{j=0}^{n-1} \|D^c g^N(g^{Nj}(x)) \| 
		\leq e^{nN(\lambda+3\epsilon)} 					
									\]
for every $n \geq 1$. 

\begin{lemma}\label{largerthantheta}
 There exists $\theta>0$ such that 
$\mu(H(g)) > \theta$ for every $g\in \mathcal{U}$ and $\mu \in \AM_g$.
\end{lemma}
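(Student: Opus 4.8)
The plan is to bound $\mu(H(g))$ from below uniformly by exploiting the two facts we have about invariant admissible measures in $\mathcal{U}$: their integrated central exponent is strictly below $\lambda$, and (more importantly) they are built out of simple admissible measures on carriers with uniformly bounded densities, so the relevant defect sets cannot have too large a measure on any carrier. First I would reduce to carriers: by Proposition \ref{ergdecomp} and the integral representation \eqref{fubini}, it suffices to produce $\theta>0$ such that $(\Gamma,\phi)(H(g))>\theta$ for every simple admissible measure $(\Gamma,\phi)$ with $C^{-1}\le\phi\le C$ and every $g\in\mathcal{U}$, where $C$ is the uniform constant from Lemma \ref{Lip1}/Proposition \ref{likePeSi}; this follows since any $\mu\in\AM_g$ has a lift supported in such $\A(a,C)$, and $\mu(H(g))=\int(\Gamma,\phi)(H(g))\,d\umu\ge\theta$. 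Since $C^{-1}\le\phi\le C$, it further suffices to estimate $(\Gamma,1)(H(g))$, losing only a factor $C^2$ in $\theta$.

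Next I would set up the standard ``large deviation at the level of a single iterate'' argument. Write $\varphi_g(x)=\frac1N\log\|D^cg^N(x)\|$ and $S_n\varphi_g(x)=\sum_{j=0}^{n-1}\varphi_g(g^{Nj}(x))$, so that $x\in H(g)$ iff $S_n\varphi_g(x)\le nN(\lambda+3\epsilon)$ for \emph{all} $n\ge1$. The point is that, because $\AM_g$ is invariant under $f_*^N$ and $\hat\lambda_+^c<\lambda+\epsilon$ on it, any accumulation point $\nu$ of $\frac1n\sum_{k=0}^{n-1}(f_*^N)^k(\Gamma,1)$ lies in $\AM(a,C)$ (Proposition \ref{likePeSi}) and hence satisfies $\int\varphi_g\,d\nu<\lambda+\epsilon$. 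Therefore, for $n$ large, the Birkhoff averages $\frac1n S_n\varphi_g$ are, in $(\Gamma,1)$-measure, concentrated below $\lambda+2\epsilon$: the set $G_n=\{x\in\Gamma:\frac1n S_n\varphi_g(x)<\lambda+2\epsilon\}$ has $(\Gamma,1)$-measure bounded below by some $\theta_0>0$, uniformly in $g\in\mathcal{U}$ and in the carrier (uniformity coming from compactness of $\AM(a,C)$, compactness of $\mathcal{U}$, and the bound $\|\varphi_g\|_\infty\le\log\sup\|Df\|$). To pass from ``$\frac1n S_n<\lambda+2\epsilon$ eventually'' to ``$S_m\le mN(\lambda+3\epsilon)$ for all $m$'', I would run the usual maximal-function / pigeonhole trick: on the set where the averages stay below $\lambda+2\epsilon$ from some time on, one discards a set of controlled measure (a geometric tail in the threshold gap $\epsilon$, using that $\varphi_g$ is bounded) to land inside $H(g)$. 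This is where the slack between $2\epsilon$ and $3\epsilon$, and the earlier choice $\lambda+4\epsilon<0$, gets used.

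The main obstacle is making the lower bound $\theta$ genuinely \emph{uniform} — independent of the carrier $\Gamma$, of $g\in\mathcal{U}$, and (crucially) of the iterate index implicit in the Birkhoff argument. The carrier-uniformity is handled by the fact that $(\Gamma,1)$ for $\Gamma\in\K(a)$ lies in the compact set $\iota(\A(a,C))\subset\M(M)$ and by the uniform density bounds of Lemma \ref{Lip1}; the $g$-uniformity by shrinking $\mathcal{U}$ and using upper semicontinuity of $\hat\lambda_+^c$ (Lemma \ref{semicont}) together with closedness of $S$ (Proposition \ref{closed}); the iterate-uniformity by the compactness of $\AM(a,C)$, which prevents the Birkhoff limits from escaping the region $\{\int\varphi_g<\lambda+\epsilon\}$. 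Once these three uniformities are in place, the measure estimate on the ``good'' set $G_n$ is bounded below by a fixed $\theta_0$ for all large $n$, and the maximal-inequality cleanup subtracts only a fixed fraction, yielding the desired $\theta$.
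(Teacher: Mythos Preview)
Your reduction in the first paragraph is where the argument breaks. You claim it suffices to show $(\Gamma,\phi)(H(g))>\theta$ for \emph{every} simple admissible measure in $\A(a,C)$. But this is simply false: already the $n=1$ condition defining $H(g)$ reads $\|D^cg^N(x)\|\le e^{N(\lambda+3\epsilon)}$, and since $\lambda+3\epsilon<0$ this is a genuine pointwise contraction requirement. The mostly contracting hypothesis is an asymptotic (Lyapunov) condition, not a pointwise one, so nothing prevents a carrier $\Gamma$ from lying entirely in a region where $\|D^cg^N\|>e^{N(\lambda+3\epsilon)}$; for such $\Gamma$ one has $(\Gamma,\phi)(H(g))=0$. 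Hence no uniform lower bound on $(\Gamma,\phi)(H(g))$ exists, and the reduction collapses. Your later ``maximal-function / pigeonhole trick'' inherits the same defect: knowing that $\tfrac{1}{n}S_n\varphi_g(x)<\lambda+2\epsilon$ for all large $n$ gives no control on the small-$n$ partial sums, and those are exactly what decide membership in $H(g)$. Discarding a subset of $\Gamma$ cannot repair this, because the obstruction lives along the \emph{forward orbit} of $x$, not on the carrier; no ``geometric tail'' estimate of the kind you describe follows from the information available.

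What the paper does instead is keep the invariant measure $\mu$ throughout and exploit its $g$-invariance directly. After reducing to ergodic $\mu$, one picks a single point $x_0$ that is Birkhoff-generic for both $\zeta=\tfrac1N\log\|D^cg^N\|$ and for $\chi_{H(g)}$, and then applies Pliss' Lemma along the orbit of $x_0$: since the time-average of $\zeta$ is below $\lambda+2\epsilon$, a positive density (at least $\delta=\epsilon/(\lambda+4\epsilon-h)$) of times $k_i$ along one of the $N$ sub-orbits have all \emph{forward} partial sums from $k_i$ bounded by $\lambda+3\epsilon$, i.e.\ $g^{Nk_i+p}(x_0)\in H_m(g)$ for every truncation level $m$. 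Birkhoff genericity of $x_0$ then converts this density-along-orbit into $\mu(H_m(g))\ge\delta/N$, uniformly in $m$, whence $\mu(H(g))\ge\delta/N=:\theta$. The crucial ingredient you discarded by passing to carriers is precisely the $g$-invariance of $\mu$: it is what turns frequency along an orbit into measure, and there is no substitute for it at the level of a single $(\Gamma,\phi)$.
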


The proof of Lemma \ref{largerthantheta} is a blend of Pliss' Lemma and Birkhoff's Ergodic Theorem. The former is used to achieve good hyperbolic behaviour for many points (positive frequency) along a fixed orbit. The latter transformes this positive frequency into positive measure. The idea comes from Ma\~n\'e's proof of Oseledet's theorem \cite{M}. However simple it may be, it is quite an astonishing argument. For, at a first glance, it is not even clear why $H(g)$ should be nonempty.

\begin{lemma}[Pliss' Lemma \cite{Pl}]
Let $h<A$ be real numbers and $a_0, \ldots, a_{k-1}$ some finite sequence such that $\min \{a_0, \ldots, a_{k-1} \} \geq h$ and
\[\sum_{i=0}^{k-1} a_i \leq kA. \]
For every $\epsilon>0$ there exist integers $0 \leq k_1 < \ldots< k_l < k-1$, with 
$l \geq k \frac{\epsilon}{A+\epsilon-h}$, such that
\[\sum_{j=k_i}^n a_j \leq (n-k_i)(A+\epsilon) \]
for every $1\leq i \leq l$ and $k_i \leq n \leq k-1$. 
\end{lemma}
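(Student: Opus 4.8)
The statement is a classical combinatorial/analytic fact, so the plan is to give a short, self-contained argument. Set $S_0 = 0$ and $S_n = \sum_{i=0}^{n-1} a_i$ for $1 \le n \le k$, and consider the ``discounted'' sequence $T_n = S_n - n(A+\epsilon)$ for $0 \le n \le k$. Declare an index $m$ (with $0 \le m \le k-1$) to be a \emph{good time} if $T_n \le T_m$ for every $n$ with $m \le n \le k$, i.e.\ if the running quantity $T$ never exceeds its value at $m$ at any later time. The point is that $m$ being a good time is exactly equivalent to the conclusion $\sum_{j=m}^{n} a_j \le (n-m+1)(A+\epsilon)$ we want (after matching indices carefully with the statement's $\sum_{j=k_i}^{n} a_j \le (n-k_i)(A+\epsilon)$; I would reconcile the off-by-one in ranges at this point). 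So it suffices to show there are at least $l \ge k\frac{\epsilon}{A+\epsilon-h}$ good times among $0, \ldots, k-1$.

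The heart of the argument is a counting estimate on how large $T$ can drop between consecutive good times, versus how much it can rise. First, record the two elementary facts. Between any two successive good times $k_i < k_{i+1}$, the value $T$ cannot have decreased too much: since each step satisfies $T_{n+1} - T_n = a_n - (A+\epsilon) \ge h - (A+\epsilon)$, over $k_{i+1}-k_i$ steps we get $T_{k_{i+1}} - T_{k_i} \ge (k_{i+1}-k_i)(h - A - \epsilon)$, hence $T_{k_i} - T_{k_{i+1}} \le (k_{i+1}-k_i)(A+\epsilon-h)$. Second, I claim the good times are precisely the running-minimum-from-the-right, so $T$ is monotonically non-increasing along the subsequence of good times and, more importantly, between a good time and the next one $T$ only goes up in the sense that $T_{k_i} \ge T_n$ is false for the relevant $n$ — I would phrase this cleanly by noting $\min_{k_i \le n \le k} T_n = T_{k_{i+1}}$ when $k_{i+1}$ is the next good time, so $T_{k_{i+1}} \le T_{k_i}$, while for $n$ strictly between, failure of goodness gives the needed inequality. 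Telescoping over all gaps plus the initial and final segments:
\[
T_0 - T_k \;=\; \sum (\text{drops across good times}) \;+\; (\text{remaining bounded terms}) \;\le\; \Big(\sum_i (k_{i+1}-k_i) + (\text{boundary})\Big)(A+\epsilon-h) \;\le\; k\,(A+\epsilon-h)\cdot\frac{l}{\#\text{(gaps contributing)}},
\]
which I will make precise; the cleaner route is: the total decrease $T_0 - T_k$ along the whole sequence is bounded \emph{below} using $S_k \le kA$, namely $T_k = S_k - k(A+\epsilon) \le kA - k(A+\epsilon) = -k\epsilon$, so $T_0 - T_k \ge k\epsilon$ (using $T_0 = 0$). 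On the other hand, all of this decrease must be ``realized'' within the at most $l$ gaps of total length $\le k$, so $k\epsilon \le T_0 - T_k \le (\text{total length of gaps})\,(A+\epsilon-h) \le k(A+\epsilon-h)$ — this only gives nonemptiness, so I need to be sharper: the decrease happening between good time $k_i$ and $k_{i+1}$ is at most $(k_{i+1}-k_i)(A+\epsilon-h)$ and there are exactly $l$ such contributing segments covering all indices, giving $k\epsilon \le \sum_{i}(k_{i+1}-k_i)(A+\epsilon-h)$. Combined with $\sum_i(k_{i+1}-k_i) \le k$ this is still too weak; the fix is to bound each gap length by, and instead to observe each good-to-good drop is at most $A+\epsilon-h$ \emph{per unit length} but the number of unit steps between consecutive good times where no goodness occurs is what must be controlled — ultimately one gets $l(A+\epsilon-h) \ge$ (net decrease rate) forcing $l \ge k\epsilon/(A+\epsilon-h)$.

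\textbf{Expected main obstacle.} The delicate point is exactly the bookkeeping in that last paragraph: getting the \emph{sharp} constant $l \ge k\frac{\epsilon}{A+\epsilon-h}$ rather than a weaker bound. The right way to organize it — and what I would actually write — is to define $M_n = \min_{n \le j \le k} T_j$, observe $M$ is non-decreasing in $n$, that good times are exactly the $n < k$ with $T_n = M_n$, and that $M_k - M_0 = -T_0 + T_k$... no: $M_0 = \min_j T_j \le T_k$ and $M_{k-1} = \min(T_{k-1},T_k)$. Cleaner still: process indices from right to left, track the running minimum, each time it strictly decreases below the previous minimum a new good time is created, and the total possible decrease $0 - \min_j T_j \ge -T_k = k\epsilon - \text{(something)}$... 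I will nail the exact inequality by relating the number of record-lows to the total drop divided by the maximal per-step drop $A+\epsilon - h$, which is where the denominator $A+\epsilon-h$ comes from, and the numerator $k\epsilon$ comes from $S_k \le kA$. Everything else (monotonicity, the step bound $a_n - (A+\epsilon) \ge h - (A+\epsilon)$, translating good times back to the stated inequality) is routine.
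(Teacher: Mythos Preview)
The paper does not actually prove Pliss' Lemma: it only states it and refers the reader to \cite{ABV} for a concise proof. So there is no in-paper argument to compare against; your approach via the discounted partial sums $T_n = S_n - n(A+\epsilon)$ and ``good times'' defined as right-running maxima of $T$ is exactly the standard proof (and the one in \cite{ABV}).

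That said, your write-up does not close the argument, and the place where it stalls is precisely the one you flag. You correctly compute $T_0 - T_k \ge k\epsilon$ and correctly note that the per-step bound gives $T_{k_i} - T_{k_{i+1}} \le (k_{i+1}-k_i)(A+\epsilon-h)$, then observe this is too weak. The missing observation is that the drop across \emph{each} gap between consecutive good times is in fact bounded by a \emph{single} step, not by the gap length: for every $m$ strictly between $k_i$ and $k_{i+1}$, since $m$ is not good there is a later index $n$ with $T_n > T_m$, and the largest such $n$ is necessarily good, hence $\ge k_{i+1}$; since $k_{i+1}$ is itself good this forces $T_m < T_{k_{i+1}}$. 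In particular $T_{k_i+1} \le T_{k_{i+1}}$, so
\[
T_{k_i} - T_{k_{i+1}} \;\le\; T_{k_i} - T_{k_i+1} \;=\; (A+\epsilon) - a_{k_i} \;\le\; A+\epsilon-h.
\]
The same reasoning handles the tail after $k_l$ (giving $T_{k_l} - T_k \le A+\epsilon-h$) and the head before $k_1$ (giving $T_0 \le T_{k_1}$). Telescoping then yields $k\epsilon \le T_0 - T_k \le l(A+\epsilon-h)$, which is the sharp bound. Once you insert this one-line observation, everything you wrote becomes a complete proof; without it the counting remains circular.
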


A concise proof of Pliss Lemma can be encountered in \cite{ABV}.

\begin{proof}[Proof of Lemma \ref{largerthantheta}]
Suppose, without loss of generality, that $\mu$ is ergodic. The general case then follows from the ergodic decomposition theorem.
To simplify notation, write $\zeta(x)=\frac{1}{N} \log \|D^c g^N(x)\|$. 
By ergodicity of $\mu$, there is some point $x_0 \in M$ such that 

\begin{itemize}
 \item $\displaystyle \lim_{n\rightarrow \infty} 
 	\frac{1}{n}\sum_{k=0}^{n-1}\zeta(g^{k}(x_0)) = \int \zeta d\mu$, 
 \item $\lim_{n\rightarrow \infty} \frac{1}{n}\#\{0\leq k 
		\leq n: g^k(x_0) \in H(g)\} = \mu(H(g,N))$.
\end{itemize}

Consider the nested sequence of sets 
\[H_m(g) = \{x\in M: \prod_{j=0}^{n-1} \|D^c g^N( g^{jN}(x)) \| 
\leq e^{nN(\lambda+3\epsilon)} \quad \forall 0< n \leq m \}.\]
Clearly, $H_{m+1}(g) \subset H_m(g)$ and $H(g) = \bigcap_{m\geq 1} H_m(g)$, so if we find $\theta>0$ with $\mu(H_m(g)) \geq \theta$ for all $m \geq 1$, then we also have $\mu(H(g))\geq \theta$.

To this end, take some large multiple of $N$, say $k N$, satisfying 
\[\frac{1}{kN}\sum_{j=0}^{kN-1} \zeta(g^j(x_0)) < \int \zeta d\mu + \epsilon < \lambda + 2\epsilon.\]
We can decompose the orbit $x_0, g(x_0), \ldots, g^{k N-1}(x_0)$ into $N$ disjoint subsets, jumping $N$ iterates at each time: $g^j(x_0), g^{j+N}(x_0), \ldots, g^{j+(k-1)N}(x_0)$, $j=0, \ldots N-1$. Since the average of $\zeta$ along the $x_0, g(x_0), \ldots, g^{k N-1}$ is less than $\lambda + 2\epsilon$, so must be the case for at least one of the sub-orbits. In other words, there  is at least one $p\in \{0, \ldots, N-1\}$ satisfying 
\[\frac{1}{k} \sum_{j=0}^{k-1} \zeta(g^{jN+p}(x_0)) < \lambda + 2 \epsilon.\]
 
Let $h$ be a lower bound for $\zeta$, say 
\[ h = \inf_{g\in \mathcal{U}} \inf_{x \in M} \frac{1}{N} \log\|(D^c g^N(x))^{-1}\|.\] 
According to Pliss' Lemma, there is some $l \geq k \delta$, where $\delta = \frac{\epsilon}{\lambda+4\epsilon - h}$, and  $0\leq k_1< \ldots < k_l < k-1$, such that 
\begin{equation*}
\frac{1}{n-k_i} \sum_{j=k_i}^n \zeta(g^{jN+p}(x))\leq \lambda+3\epsilon  
\end{equation*}
for each $1\leq i \leq l$ and $k_i\leq n \leq k-1$.
Clearly $k_{l-m} < k-1-m$ for each $m\geq 1$, so $g^{k_i N+p}(x)\in H_m(g)$ for every $i < k-1-m$. Thus every orbit of length $k N$ starting at $x_0$ has at least $l-m \geq \delta k - m$ visits to $H_m(g)$. Recall that $x_0$ was chosen so that the frequency of visits to $H_m(g)$ is equal to $\mu(H_m(g))$. Therefore $\mu(H_m(g)) \geq \frac{\delta}{N}$ and the proof follows by taking $\theta = \frac{\delta}{N}$.
\end{proof}

Inspired by \cite{BDP, Ta}, we now dig into the proof of Lemma \ref{size}. The tactics of the proof is to find some $K>0$ such that $L_K(g) \supset H(g)$ for every $g\in \mathcal{U}$.

\begin{proof}[Proof of Lemma \ref{size}]
 Let $\sigma = e^{N(\lambda + 4 \epsilon)/2}$. By continuity of $D^c g$, 
there is some small $K > 0$ such that for every $x \in H(g)$ we have
\begin{equation}\label{centralcontraction}
 \| D^c g^N(y) v \| \leq \sigma \|v\|
\end{equation}
whenever $d(x, y) \leq K$ and $v \in E_y^c(g)$. Upon possibly reducing $K$ we may suppose that $g_{\vert B_{K}(x)}$ is injective at any $x\in M$ and for any $g$ in $\mathcal{U}$.
 We shall prove that if $x \in H(g)$, then $W^s(g,x)$ contains a disc of radius $K$,
 centred at $x$. 
Indeed, it follows from (\ref{subexponential}) that we can choose $j$ such that 
$K \sigma^j < r(g^{j N}(x))$. Let $D$ be the disc of radius $K \sigma^j$, centred at $g^{j N}(x)$ in $W_{loc}^s(g, g^{j N}(x))$. We claim that 
$D':=(g_{\vert B_K(x)})^{-j N}(D) \subset W^s(g, x)$ contains a disc of radius $K$, 
centred at $x$. The proof is by contradiction.

Suppose there exists $y\in \partial D'$ with $d^{D'}(x, y) < K$. Then, by (\ref{centralcontraction}), we have $d^{g^{j N}(D')}(g^{j N}(x), g^{j N}(y)) < K \sigma^j$. But this is absurd, since $g^{j N}(y) \in \partial D$ and $D$ has radius $K \sigma^j$.
\end{proof}

Let $\mathcal{L}(g) = \{(\Gamma, \phi) \in \A: (\Gamma, \phi)(L_K(g)) > \theta/2\}$.

\begin{cor}\label{manylargemanifolds}
 Let $\mathcal{U}$ be as in Lemma \ref{size} and 
$g\in \mathcal{U}$. Suppose $\mu \in \AM_g(M)$ and let
$\umu$ be any lift of $\mu$. Then $\umu(\mathcal{L}(g))>\theta/2$. 
\end{cor}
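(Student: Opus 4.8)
The plan is to argue by contradiction, converting the quantitative statement $\mu(L_K(g)) \geq \theta$ from Lemma~\ref{size} into a lower bound on the $\umu$-measure of the set $\mathcal{L}(g)$ of carriers that see a substantial portion of $L_K(g)$. First I would recall the integral representation afforded by the lift: since $\uiota(\umu) = \mu$, the Fubini relation (\ref{fubini}) gives
\[
\mu(L_K(g)) = \int_{\A} (\Gamma, \phi)(L_K(g)) \ d\umu(\Gamma, \phi).
\]
Write $g(\Gamma,\phi) := (\Gamma,\phi)(L_K(g))$, a function on $\A$ taking values in $[0,1]$. The complement $\A \setminus \mathcal{L}(g)$ consists precisely of those $(\Gamma,\phi)$ with $g(\Gamma,\phi) \leq \theta/2$.

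The main step is then an elementary averaging (Markov/Chebyshev-type) estimate. Splitting the integral over $\mathcal{L}(g)$ and its complement,
\[
\theta \leq \mu(L_K(g)) = \int_{\mathcal{L}(g)} g(\Gamma,\phi)\,d\umu + \int_{\A\setminus\mathcal{L}(g)} g(\Gamma,\phi)\,d\umu \leq \umu(\mathcal{L}(g)) \cdot 1 + 1 \cdot \frac{\theta}{2},
\]
using $g \leq 1$ on $\mathcal{L}(g)$ and $g \leq \theta/2$ off it. Rearranging yields $\umu(\mathcal{L}(g)) \geq \theta/2$, which is exactly the claim. The only subtlety to check is measurability of $(\Gamma,\phi)\mapsto (\Gamma,\phi)(L_K(g))$ so that the integrals make sense; but $L_K(g)$ is a Borel set (it is defined by an orbit-wise geometric condition, and in fact contains the Borel set $H(g)$ from Lemma~\ref{largerthantheta}), and measurability of $x\mapsto \vartheta(x)(E)$ for Borel $E$ was already established in Section~\ref{toolbox} via dominated approximation of $\chi_E$ by continuous functions, which is precisely the setting here with $\vartheta = \iota$.

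I do not expect any genuine obstacle: the corollary is a soft consequence of Lemma~\ref{size} together with the definition of a lift, and the proof is a one-line pigeonhole argument once the integral representation is written down. If anything requires a word of care, it is simply recording that $L_K(g)$ is measurable (so that $\mathcal{L}(g)$ is well-defined as a measurable subset of $\A$ on which $\umu$ can be evaluated), and that the bound is uniform because $\theta$ and $K$ were chosen uniformly over $\mathcal{U}$ in Lemma~\ref{size}.
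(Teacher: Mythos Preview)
Your argument is correct and is essentially the same as the paper's: both write $\mu(L_K(g)) = \int (\Gamma,\phi)(L_K(g))\,d\umu$ and apply a Markov/Chebyshev-type estimate (the paper phrases it as Chebyshev applied to $1-\varphi$, you split the integral directly). The only cosmetic slip is that your bound yields $\umu(\mathcal{L}(g)) \geq \theta/2$ rather than the strict inequality in the statement; the paper recovers strictness by retaining the factor $\umu(\A\setminus\mathcal{L}(g))$ in the estimate on the complement, obtaining $\umu(\mathcal{L}(g)) \geq \frac{\theta/2}{1-\theta/2} > \theta/2$.
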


\begin{proof}
 Pick some number $\theta'<\theta$, and let $\varphi:\A \rightarrow \mathbb{R}$ 
be the function 
\[(\Gamma, \phi) \mapsto (\Gamma, \phi)(L(g)).\] Thus we have $0\leq \varphi \leq 1$ and, from Lemma \ref{size}, $\int \varphi d\umu>\theta$. Now take $\varphi' = 1-\varphi$. Again we have $0\leq \varphi' \leq 1$, but this time $\int \varphi' d\umu < 1-\theta$. Applying Chebychev's inequality we get
\[ \umu(\{ \varphi' \geq 1- \theta'\}) \leq \frac{\int \varphi' d\umu}{1-\theta'} <\frac{1-\theta}{1-\theta'}.\]
After rearranging we obtain $\umu(\{\varphi > \theta'\}) > \frac{\theta-\theta'}{1-\theta'}$, and the result follows by taking $\theta' = \theta/2$.
\end{proof}

\subsection{The natural extension and balanced lifts}\label{ILBL}

As already mentioned, lifts of admissible measures are not unique. In this section, we define the class of \emph{balanced lifts}. They are lifts with special properties that turn out to be important in the proof of part 2 and 3 of Theorem B. To get a flavour of what it means for a lift to be balanced, we cheat a bit and let the reader know that the atomic lifts considered in Example \ref{atomiclift} are not balanced, whereas that in Example \ref{nonatomiclift} is.

Given a system $f\in \PH(U, S^u)$ with attractor $\Lambda = \bigcap_{n\geq0}f^n(U)$, we associate to it the inverse limit
\[
\hat{\Lambda}_f = \{\mathbf{x}=(\ldots, x_{-2}, x_{-1}, x_0) \in \Lambda_f^{\mathbb{Z}\setminus\mathbb{N}}: f(x_{i-1})= x_i 
\forall i\leq 0\},
\] accompanied with the map 
\begin{eqnarray*}
\hat{f}: \hat{\Lambda}_f  & \rightarrow & \hat{\Lambda}_f \\
(\ldots x_{-2}, x_{-1}, x_0) & \mapsto & (\ldots x_{-1}, x_0, f(x_0)).
\end{eqnarray*}

Thus $\pi\circ\hat{f} = \pi \circ f$, where $\pi$ is the projection to the $0$th coordinate. Elements of the inverse limit are possible histories for points in $\Lambda_f$. Due to the domination property of $f$, we may assign to each such history, a unique direction in the Grassmannian through
\begin{eqnarray*}
\mathsf{g}:\hat{\Lambda}_f &\rightarrow& G^u M \\
\mathbf{x} &\mapsto& \bigcap_{i \geq0} Df^i(x) S_{x_{-i}}^u.\\
\end{eqnarray*}

Given $\mu\in \mathcal{M}(\Lambda_f)$ there exists a unique measure $\mu^{\ominus}$ in $\M(\hat{\Lambda}_f)$, invariant under $\hat{f}$. We call $\mu^{\ominus}$ the natural extension of $\mu$. 
We need some auxiliary notation in order to define the notion of balanced lifts.
Let $\Pi: \A \rightarrow G^u M$ be the projection $(r, x, h, \psi, \phi) \mapsto (x, h)$. 
Whenever $\mu$ and $\nu$ are two measures on the same measurable space and $B \geq 1$ is some constant, the notation $\mu \overset{B}{\sim} \nu$ means that $B^{-1} \nu \leq \mu \leq B \nu$.
\begin{defn}
We say that a lift $\underline{\mu}$ of $\mu \in \AM_f$ is balanced  if there is $B\geq1$ such that $\Pi_*\underline{\mu} \overset{B}{\sim} \mathsf{g}_* \mu^{\ominus}$ (in which case we say that $\umu$ is $B$-balanced).
\end{defn}

In particular, if $\umu$ is a balanced lift of $\mu$, we have $\Pi_*^M \umu \overset{B}{\sim} \mu$, where $\Pi^M: \A \rightarrow M$ is the projection $(r, x, h, \psi, \phi) \mapsto x$. The question arises as to whether such lifts are always to be found. Luckily, the disintegration technique described in Section \ref{technique} provides a mechanism to produce them for any invariant admissible measure.

\begin{prop}\label{balancedlift}
Let  $f $ be a partially hyperbolic system. There is a neighbourhood $\mathcal{U}$ of $f$ in $\PH$ and $B>1$ such that, given any $g \in \mathcal{U}$ and every $\mu\in \AM_g(M)$, there exists a $B$-balanced lift for $\mu$, supported on $\A(a, C)$.
\end{prop}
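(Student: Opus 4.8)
The plan is to fix a neighbourhood $\mathcal{U}$ of $f$ and a stratum $\A(a,C)$ as supplied by Proposition \ref{likePeSi}, so that for every $g\in\mathcal{U}$ every $g$-invariant admissible measure $\mu$ actually lies in $\AM_g(a,C)$; this gives uniform control of the regularity levels regardless of which lift we end up constructing. Fix $g\in\mathcal{U}$ and $\mu\in\AM_g(M)$. Start from \emph{some} lift $\umu_0\in\M(\A(a,C))$ of $\mu$ (which exists by the choice of $\A(a,C)$), and consider the averaged, disintegrated push-forwards $\umu_n=\frac1n\sum_{k=0}^{n-1}\Xi_{(g^k,a)}\umu_0$, whose images under $\uiota$ are $\frac1n\sum_{k=0}^{n-1}g_*^k\mu=\mu$ by invariance of $\mu$. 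By compactness of $\M(\A(a,C))$ pass to a convergent subsequence $\umu_{n_j}\to\umu$; then $\umu$ is again a lift of $\mu$ supported on $\A(a,C)$. The point is that this particular lift, built out of the Step~1--Step~3 disintegration, is balanced: the disintegration distributes the centres of carriers according to the \emph{point distribution of the pushed-forward measure itself}, and the directions of those carriers are exactly the images $Dg^k(x)\,T_x\Gamma$ of the original directions, which — by the domination estimate (\ref{domination}) — are converging towards the unstable directions $\mathsf{g}(\mathbf{x})$ attached to pre-histories.

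Concretely, the main estimate to carry out is the comparison $\Pi_*\umu \overset{B}{\sim}\mathsf{g}_*\mu^{\ominus}$. I would first establish it at finite level: for $\umu_n$ one checks that $\Pi^M_*\umu_n$ (the distribution of centres) is comparable to $\mu$ with a constant $B$ that is \emph{independent of $n$}. This is where the construction of Section \ref{technique} pays off — in Step~1 the weight $\rho$ is defined precisely so that the barycentre reproduces $f_*^n(\Gamma,\phi)$, and the density bounds from Lemma \ref{Lip1} (together with Sublemma \ref{estimate} and the volume comparison $C_1^{\pm1}\vol(\mathbb{B}^u)r^u$ in its proof) show that the total mass $\rho(x)$ assigned near a point, and hence the push-forward of the centre distribution, stays within a fixed multiplicative factor of the measure being represented, uniformly in $n$. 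Lifting this from centres to the pair (centre, direction): the carrier $\Gamma_x$ produced at time $n$ has direction $h_x=Dg^n(x)T_x\Gamma$, and by (\ref{domination}) the angular distance between $h_x$ and the "limit" direction $\bigcap_{i\ge0}Dg^i S^u$ along the history of $g^n(x)$ decays like $\tau^{n-n_0}$; so $\Pi_*\umu_n$ and $(\text{history-direction map})_*$ of the corresponding measure on $\hat\Lambda_g$ become $B$-comparable as $n\to\infty$, and the $B$ can be taken uniform over $\mathcal{U}$ because all the constants ($\tau$, $n_0$, $C_0$, $C_1$, $K_0$) are. Passing to the limit along $n_j$, using weak continuity of $\Pi_*$ and the fact that $\mu^{\ominus}$ is the unique $\hat g$-invariant lift of $\mu$, yields $\Pi_*\umu\overset{B}{\sim}\mathsf{g}_*\mu^{\ominus}$.

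I would organize the write-up as: (i) fix $\mathcal{U},a,C$ via Proposition \ref{likePeSi}; (ii) recall that $\Xi_{(g^n,a)}$ makes $\uiota\circ\Xi_{(g^n,a)}=g_*^n\circ\uiota$ and that $\umu_n$ has barycentre $\mu$; (iii) prove the uniform centre-comparison $\Pi^M_*\umu_n\overset{B}{\sim}\mu$ from the explicit formula for $\rho$ and the estimates inside the proof of Lemma \ref{Lip1}; (iv) upgrade to the direction component using (\ref{domination}) and the definition of $\mathsf{g}$, getting $\Pi_*\umu_{n_j}$ converging (up to the fixed factor $B$) to $\mathsf{g}_*\mu^{\ominus}$; (v) extract a subsequential weak limit $\umu$ and conclude it is a $B$-balanced lift supported on $\A(a,C)$. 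The main obstacle is step (iv): reconciling the \emph{finite-time} carrier directions $Dg^n(x)T_x\Gamma$ — which depend on the forward iterate of a disc, not on any pre-history — with the \emph{inverse-limit} object $\mathsf{g}_*\mu^{\ominus}$. The bridge is that averaging over $k=0,\dots,n-1$ and then letting $n\to\infty$ effectively samples deeper and deeper pre-histories of $\mu$-typical points, and the contraction rate $\tau$ in (\ref{domination}) forces the sampled directions to lie within $O(\tau^{n-n_0})$ of the genuine limit directions; making this precise — in particular checking that the weak limit of the direction-marginals is exactly $\mathsf{g}_*\mu^{\ominus}$ and not merely some measure projecting correctly to $M$ — is the delicate part, and it is precisely here that one uses that $\mu$ is \emph{invariant} (so that its natural extension exists and is unique).
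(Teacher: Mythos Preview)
Your proposal is essentially correct and follows the same strategy as the paper's proof. Two minor differences worth noting: first, the paper does not Ces\`aro-average but simply takes $\umu_n=\Xi_{(g^n,a)}\umu_0$, which already lifts $\mu$ by invariance---your averaging is harmless but unnecessary. Second, rather than first comparing centres ($\Pi^M_*\umu_n\overset{B}{\sim}\mu$) and then upgrading to directions, the paper introduces an auxiliary sequence $\tilde\mu_n=(Dg^n)_*\int\Theta\,d\umu_0$ on $G^uM$ (where $\Theta(\Gamma,\phi)=\int_\Gamma\delta_{T_z\Gamma}\,d(\Gamma,\phi)(z)$) and cleanly separates the argument into two claims: (1) $\tilde\mu_n\to\mathsf{g}_*\mu^{\ominus}$, proved via cone contraction and $Dg$-invariance of $\mathsf{g}_*\mu^{\ominus}$; and (2) $\Pi_*\umu_n\overset{B}{\sim}\tilde\mu_n$ uniformly in $n$, which reduces to showing $B^{-1}\phi\leq\rho_{(\Gamma,\phi,g^n,a)}\leq B\phi$ from the explicit formula for $\rho$ and the estimates of Lemma~\ref{Lip1}. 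This is exactly the content of your steps (iii)--(iv), and the ``delicate part'' you flag is precisely the paper's Claim~1; the intermediate object $\tilde\mu_n$ just makes the bridge between finite-time carrier directions and the inverse-limit measure more transparent.
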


\begin{proof}
 Let $\mu$ be an $f$-invariant admissible measure and pick some lift $\umu_0$ of $\mu$. 
By Proposition \ref{likePeSi}, we may suppose that $\umu_0$ is supported on $\A(a, C)$ for some large $C$. 
Since $\mu$ is invariant, $\umu_n =\Xi_{(f^n, a)} \umu_0$ is also a lift of $\mu$ for every $n \geq 0$. 
We will show that any accumulation point $\umu$ of $\umu_n$ is a $B$-balanced lift of $\mu$, and that $B$ can be chosen uniformly in a neighbourhood of $f$.

We define a map
\begin{eqnarray*}
 \Theta: \A &\rightarrow& \M(G^u M) \\
(\Gamma, \phi) &\mapsto& \int_{\Gamma} \delta_{T_z \Gamma} \ d(\Gamma, \phi)(z),
\end{eqnarray*}
giving the distribution of simple admissible measures in the Grassmannian bundle. For every $n \geq 0$, let 
$\tilde{\mu}_n = (Df^n)_* \int \Theta \  d\umu_0$. Denoting by $p$ the canonical projection $G^u M \rightarrow M$, sending $(x, h)$ into $x$, we certainly have $p_*\tilde{\mu}_n = f_*^n \mu = \mu$ for every $n\geq 0$. The statement that
$\Pi_* \umu \overset{B}{\sim} \mathsf{g}_* \mu^{\ominus}$ results from two claims:
\begin{enumerate}
 \item $\lim_{n\rightarrow \infty} \tilde{\mu}_n = \mathsf{g}_* \mu^{\ominus}$
\item $\Pi_* \umu_n \overset{B}{\sim} \tilde{\mu}_n$ for some $B\geq 1$ and every $n \geq 0$.
\end{enumerate}

To prove the first claim, pick an open set $\mathcal{O} \subset G^u M$ arbitrarily. We need to prove that 
\[\liminf_{n \rightarrow \infty} \tilde{\mu}_n (\mathcal{O}) \geq \mathsf{g}_* \mu^{\ominus}(\mathcal{O}).\] Let $A_i$ be the set of points $(x, h) \in \mathcal{O}$ such that, given any $y\in f^{-i}(x)$, if $(x, h) \in Df^i(S_y^u)$, then $Df^i(S_y^u) \subset \mathcal{O}$. The $A_i$ form an increasing sequence of open sets. The domination property (\ref{domination}) implies that $Df$ acts as a uniform contraction on $S^u$. Consequently, given any $(x, h) \in \mathcal{O}$, such an $i$ exists: 
\[
 \bigcup_{i\geq0} A_i \supset \mathcal{O} \cap S^u.
\]
Clearly the support of $\mathsf{g}_*\mu^{\ominus}$ is contained in $S^u$. Thus, given any $\epsilon >0$, we may choose some large $j$ so that $\mathsf{g}_*\mu^{\ominus}(A_j) > \mathsf{g}_*\mu^{\ominus}(\mathcal{O})-\epsilon$. By construction of $A_j$, $Df^{-j}(A_j)$ is an open set with the special property that 
\[Df^{-j}(A_j) \supset \bigcup_{x \in p(Df^{-j}(A_j))} S_x^u.\]
In particular, $\tilde{\mu}_n (Df^{-j} A_j) = \mu(p(Df^{-j} A_j)) = \mathsf{g}_*\mu^{\ominus}(Df^{-j}(A_j))$ for every $n \geq 0$. It follows from the commuting property $\mathsf{g}\circ \hat{f} = Df \circ \mathsf{g}$ that $\mathsf{g}_* \mu^{\ominus}$ is $Df$-invariant. We can therefore estimate
\begin{eqnarray*}
\liminf_{n \rightarrow \infty} \tilde{\mu}_n (A_j)
&=& \liminf_{n \rightarrow \infty} \tilde{\mu}_n (Df^{-j} A_j) \\
&=& \mu(p(Df^{-j}A_j)) 
= \mathsf{g}_* \mu^{\ominus}(Df^{-j}(A_j)) \\
&=& \mathsf{g}_*\mu^{\ominus}(A_j) 
\geq \mathsf{g}_*\mu^{\ominus}(\mathcal{O})-\epsilon.
\end{eqnarray*}
As $\epsilon$ may be taken arbitrarily small, we have indeed proved that 
$\tilde{\mu}_n \rightarrow \mathsf{g}_* \mu^{\ominus}$.

In order to prove the second claim, note that 
\begin{itemize}
\item $\displaystyle \tmu_n = \int \left( 
                        \int \delta_{Df^n(x) T_x \Gamma}\  d(\Gamma, \phi)(x) 
                                                              \right)d\umu_0(\Gamma,\phi)$,
\item $\displaystyle \Pi_* \umu_n = \int \left(
                          \int \delta_{Df^n(x) T_x \Gamma}\  d(\Gamma, \rho_{(\Gamma, \phi, f^n, a)})(x)
                                                  \right) d\umu_0(\Gamma, \phi)$. 
\end{itemize}
Consequently, the second claim follows if we can find $B$ such that
\[
                B^{-1} \phi(x) \leq \rho_{(\Gamma, \phi, f^n, a)}(x) \leq B \phi(x)
							\]
for every $(\Gamma, \phi) \in \A$ and $x \in \Gamma$.
We shall prove the second inequality. The first one is analogous.

Fix $n \geq 0$ and $(\Gamma, \phi) \in \A$ arbitrarily. 
By definition,
\begin{eqnarray*}
   \rho(x) = \rho_{(\Gamma, \phi, f^n, a)}(x)
         &=& \int_{W_x} \frac{\phi(y)}{(\Gamma, 1)(V_y)} d(\Gamma, 1)(y) \\
          & \leq &  \frac{ \sup \{ \phi(y) : y \in W_x \} (\Gamma, 1)(W_x)}
	{ \inf \{(\Gamma, 1)(V_y): y \in W_x\} }  \\
            &\leq& C^2 \phi(x) \frac{(\Gamma, 1)(W_x)}
{\inf \{(\Gamma,1)(V_y): y \in W_x\}}.
\end{eqnarray*}
As already observed in the proof of Lemma \ref{Lip1}, $W_x \subset B_{3 R_a(x)/2}^{\Gamma,n}(x)$. Hence $(\Gamma, 1)(W_x) \leq (\frac{3}{2})^u C_1 \vol(\mathbb{B}^u)R_a(x)^u$, where $C_1$ is the constant described in the proof of Lemma \ref{Lip1}.
As $R_a(x)$ is $\frac{1}{2}$-Lipschitz with respect to the metric $d^{\Gamma,n}$, we find that $\inf_{y \in W_x} R_a(y) \geq R_a(x) - \frac{1}{2}\frac{3}{2}R_a(x) = \frac{1}{4}R_a(x)$ wherefore, again by Sublemma \ref{estimate}, 
\[
\inf_{y\in W_x}(\Gamma, 1)(V_y) \geq \inf_{y\in W_x} B_{R_a(x)/8}(y) \geq 8^{-u} C_1^{-1} \vol(\mathbb{B}^u)R_a(x)^u.
\]
The proof follows by taking $B = 12^u C^2 C_1^2$.
\end{proof}

\subsection{Statistical stability}
Before we indulge in the proof of parts 2 and 3 of Theorem B, let us make some important preliminary observations. First note that a carrier 
$(x, r, h, \phi)$ is quite well described by its three first coordinates, provided $r$ is sufficiently small. Indeed, the nonlinear displacement $\phi$ is $C^1$ close to $\exp_x\vert_h$ in a uniform fashion, due to the Lipschitz condition. Now let us fix a choice of metric $d_G$ in $G^u M$. Then, provided $a$ is small enough, there exists $\delta>0$ with the following property: Let $g\in \mathcal{U}$, where $\mathcal{U}$ is as in Lemma \ref{size}, and suppose that $\Gamma_1$ is some carrier such that $(\Gamma_1,1) \in \mathcal{L}(g)\cap \A(a)$. Then, given any carrier $\Gamma_2$ with 
$d_G (\Pi(\Gamma_1), \Pi(\Gamma_2))<\delta$, we have  $(\Gamma_1, 1)(D(\Gamma_1, \Gamma_2))>0$.

For better appreciation of the proof, let us first go through it in loose terms. If a system $g$ is close to $f$,  then every physical measure $\nu$ of $g$ is close to the finite dimensional simplex whose vertices are the physical measures of $f$, say $\mu_1, \ldots, \mu_N$. We know that there is a fairly large portion of large stable manifolds through many $\nu$-generic carriers (Corollary \ref{manylargemanifolds}). These are located near $\supp \mu_1 \cup \ldots \cup \supp \mu_N$, although we do not know precisely where. A priori they could all be cuddled up near one of the sets $\supp \mu_i$ for which $\nu$ tends to give positive weight. The idea is to prove that this hinders any other physical measure $\nu'$ of $g$ to give positive weight to $\mu_i$. We do that by using proposition \ref{balancedlift} about existence of balanced lifts. It means that if $\nu'$ were indeed to give positive weight to (a neighbourhood of) $\mu_i$, then it would have generic carriers close to some $\nu$-generic one, possessing a big portion of large stable manifolds. But then, as remarked above, absolute continuity of the stable foliation would force these carriers to be generic for the same measure, which is a contradiction. Hence every physical measure of $g$ 'occupies' one physical measure of $f$ in a one-to-one manner, so the number of physical measures of $g$ is at most that of $f$. The only way it could be equal is if every physical measure $\nu_i$ of $g$ occupies precisely one of the $\mu_i$, and gives no weight to the others.

\begin{proof}[Proof of Theorem \ref{thm2} part 2 and 3]

Let $\mu_1, \ldots, \mu_N$ denote the physical measures of $f$. As remarked upon in Section \ref{ILBL}, to each $\mu_i$, there is a unique inverse limit $\mu_i^{\ominus}$, invariant under $\hat{f}$. For every $1\leq i \leq N$, we cover $\supp \mu_i^{\ominus}$ by a finite number of balls $B_{i j}:= B_{\delta/2}(x_j), \quad 1\leq j \leq m_i$. Thus $B_i := \bigcup_{j=1}^{m_i}B_{i j}$ 
is a neighbourhood of $\supp \mu_i^{\ominus}$.

Choose a $C^2$ neighbourhood $\mathcal{U}$ of $f$ satisfying the conclusions of both Proposition \ref{balancedlift} and Lemma \ref{size}. Moreover, $\mathcal{U}$ should be small enough so that if $g \in \mathcal{U}$ and $\nu_1 \ldots \nu_{N'}$ are the physical measures of $g$, then for every $1\leq l \leq N'$,
\begin{enumerate}
 \item there exists $1\leq i \leq N$ such that 
$\nu_l^{\ominus}(B_{i j})>0 \quad \forall 1\leq j \leq m_i$,
\item $\nu_l^{\ominus}(\bigcup_{i=1}^N B_i)>1-\frac{\theta}{2C}$.
\end{enumerate}
(The map $\mu \mapsto \mu^{\ominus}$ is linear continuous.) The constant $C$ is large enough so that each $\nu_l$ has a lift supported on $\A(a, C)$.

Our aim is to prove that $N' \leq N$. Choose $a$ small enough for the remarks in the beginning of this section to apply. By Proposition \ref{balancedlift}, there exist $B$-balanced lifts $\unu_1, \ldots, \unu_{N'}$ of $\nu_1, \ldots, \nu_N$, all supported on $\A(a, C)$. It follows from the second item above that, given any $\nu_l$ there is some ball $B_{i j}$ with $\nu_l^{\ominus}(B_{i j}\cap \Pi(\mathcal{L}(g))>0$. Since $\unu_l$ is balanced, and $\unu_l$-almost every carrier is generic for $\nu_l$ we infer the existence of a $\nu_l$-generic carrier $\Gamma_{i j}^l$ such that $\Pi(\Gamma_{i j}^l) \in B_{i j}$ for some $i\in \{1, \ldots, N\}$ and $j\in \{1,\ldots, m_i\}$. 

We claim that if $k\neq l$, then it is impossible to have 
$\nu_k^{\ominus}(B_{i j}) >0 $ for every $1\leq j\leq m_i$. Otherwise, there would be some $\nu_k$-generic carrier $\Gamma_{i j}^k$ with $\Pi(\Gamma_{i j}^k) \in B_{i j}$. That would imply that
$d_G(\Gamma_{i j}^l, \Gamma_{i j}^k) <\delta$, and since $(\Gamma_{i j}^l, 1) \in \mathcal{L}(g)$ we conclude 
$(\Gamma_{i j}^l, 1)(\mathcal{D}(h_{\Gamma_{i j}^l, \Gamma_{i j}^k}))>0$ which is absurd.

We have shown that each $\nu_l$ can be associated to some $\mu_i$ in a one-to-one manner, namely by asking that $\nu_l^{\ominus}(B_{i j}\cap \Pi(\mathcal{L}(g))$ be positive for some $j= 1, \ldots, m_i$. Consequently $N'\leq N$ and the second part of Theorem B is proved. 

Suppose now that $N = N'$. Since $S$ is closed, each $\nu_l$ must be close to some convex combination $\alpha_1 \mu_1 + \ldots + \alpha_N \mu_N$. Hence $\nu_l^{\ominus}$ must be close to $\alpha_1 \mu_1^{\ominus} + \ldots + \alpha_N \mu_N^{\ominus}$. We have already seen that $\nu_l^{\ominus}(B_{i j}\cap \Pi(\mathcal{L}(g))$ would imply $\alpha_k = 0$ for every $k\neq i$. Hence every $\nu_l$ is near some $\mu_i$ and the third part of Theorem B is proved.
\end{proof}

\section{Stochastic stability}

We have seen in section \ref{ergodic} that for a system $f\in \PH$, any invariant admissible measure is a convex combination of ergodic admissible measures. If, furthermore, $f$ is has mostly contracting central direction, then every ergodic admissible measure is a physical measure. Hence, in order to prove that maps in $MC$ are stochastically stable, it suffices to prove that every zero noise limit is admissible.

\begin{prop}\label{stationarydist}
Let $f\in \PH$ and suppose that $\{\nu_{\epsilon}\}_{\epsilon}$ is a local absolutely continuous perturbation scheme. Then every zero noise limit is admissible.
\end{prop}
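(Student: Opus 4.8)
The plan is to follow the proof of Proposition~\ref{likePeSi}, replacing the deterministic map $f$ by the random compositions generated by $\{\nu_\epsilon\}_{\epsilon>0}$, and then to pass to the zero noise limit by a compactness argument. First I would fix a value $a\in(0,r_0)$ once and for all, together with a $C^2$-neighbourhood $\mathcal{U}$ of $f$ on which the constant $K_0$ of Proposition~\ref{curvature}, the constant $C$ of Lemma~\ref{Lip1} and the rigidity property of admissible manifolds are all in force, and for which $U$ remains a trapping region; then choose $\epsilon_0>0$ with $B_\epsilon(f)\subset\mathcal{U}$ for $\epsilon<\epsilon_0$. For such $\epsilon$, a stationary distribution $\mu_\epsilon=\T\mu_\epsilon$ is automatically absolutely continuous by (\ref{abscont}), so, exactly as in the Example following Corollary~\ref{compactness}, it is strongly approximated by admissible measures.

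The heart of the proof is a random version of the disintegration technique of Section~\ref{technique}. For a word $\omega=(g_1,\dots,g_n)$ of maps in $B_\epsilon(f)$ put $g_\omega=g_n\circ\cdots\circ g_1$. Since the family of admissible manifolds is invariant under every $g\in\mathcal{U}$, the map $g_\omega$ still carries each carrier onto a union of carriers, and the estimates in Proposition~\ref{curvature} and Lemma~\ref{Lip1} use only the uniform domination~(\ref{domination}) and the bounded-distortion bounds, which hold for each factor $g_i$ and therefore, by telescoping, for $g_\omega$. I would conclude that the construction of Section~\ref{technique} applies verbatim with $f^n$ replaced by $g_\omega$: each simple admissible measure $(\Gamma,\phi)$ acquires an explicit lift $\umu_{\omega,(\Gamma,\phi)}$ of $(g_\omega)_*(\Gamma,\phi)$, measurable in $\omega$, supported on $\bigcup_{a'\le a}\A(a',C)$ and giving mass tending to $1$ to $\A(a,C)$ as $n\to\infty$, with $C$ uniform in $\epsilon<\epsilon_0$. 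Integrating $\umu_{\omega,(\Gamma,\phi)}$ against $\nu_\epsilon^{\otimes n}$, and then against a lift of an arbitrary admissible measure $\mu_0$, produces a lift of $\T^n\mu_0$; so, just as in Proposition~\ref{likePeSi}, every weak accumulation point of $\frac1n\sum_{k=0}^{n-1}\T^k\mu_0$ lies in $\AM(a,C)$.

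With this in hand I would finish as follows. Fix $\epsilon<\epsilon_0$ and $\delta>0$, and take an admissible $\nu$ with $\|\nu-\mu_\epsilon\|_s<\delta$. Each pushforward $g_*$ has strong norm at most $1$, hence so does $\T$; together with $\T\mu_\epsilon=\mu_\epsilon$ this gives
\[
\Big\|\tfrac1n\textstyle\sum_{k=0}^{n-1}\T^k\nu-\mu_\epsilon\Big\|_s
=\Big\|\tfrac1n\textstyle\sum_{k=0}^{n-1}\T^k(\nu-\mu_\epsilon)\Big\|_s\le\|\nu-\mu_\epsilon\|_s<\delta
\]
for every $n$. Passing to a weak accumulation point $\nu_\infty\in\AM(a,C)$ of $\frac1n\sum_{k=0}^{n-1}\T^k\nu$ and testing against continuous functions of $C^0$-norm at most $1$ yields $\|\nu_\infty-\mu_\epsilon\|_s\le\delta$; since $\delta$ is arbitrary and $\AM(a,C)$ is weakly compact (Proposition~\ref{admismeascomp}), $\mu_\epsilon\in\AM(a,C)$. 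Finally, a zero noise limit $\mu_0$ is a weak accumulation point of the $\mu_\epsilon$ as $\epsilon\to0$, every one of which sits in the fixed compact set $\AM(a,C)$; hence $\mu_0\in\AM(a,C)\subset\AM$, that is, $\mu_0$ is admissible.

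The hard part is the middle paragraph: checking that the whole disintegration machinery of Section~\ref{technique} --- in particular the $n$-independent density bounds of Lemma~\ref{Lip1} and the curvature control of Proposition~\ref{curvature} --- genuinely survives the replacement of the iterates $f^n$ by arbitrary compositions of maps taken from $\mathcal{U}$. This is exactly the place where the uniformity of all constants over a $C^2$-neighbourhood of $f$, repeatedly emphasised in Section~\ref{toolbox}, does the work.
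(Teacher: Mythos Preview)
Your proposal is correct and follows essentially the same route as the paper: establish a random version of the disintegration machinery of Section~\ref{technique} (the paper packages this as the operator $\Xi_{(\nu_\epsilon,n,a)}^{\text{rand}}$ built from $\Xi_{(\ff_n,a)}$), use absolute continuity of the noise to see that each stationary $\mu_\epsilon$ is absolutely continuous and hence strongly approximable by admissible measures, combine the strong contraction property of $\T$ with the random analogue of Proposition~\ref{likePeSi} to force $\mu_\epsilon\in\AM(a,C)$, and finally invoke compactness of $\AM(a,C)$ for the zero noise limit. The emphasis you place on uniformity of the constants in Proposition~\ref{curvature} and Lemma~\ref{Lip1} over a $C^2$-neighbourhood of $f$ is exactly what the paper relies on (and states as Proposition~\ref{randomcurvature}).
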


Let $\Omega = \Dl(M)^{\mathbb{Z}_+}$ and write $\nue^{\mathbb{Z}_+}$ for the Bernoulli measure on $\Omega$. Given $\ff = (f_0, f_1, \ldots)\in \Omega$, we shall write 
$\ff_n = f_{n-1}\circ \ldots \circ f_1 \circ f_0$. Replacing $f^n$ with $\ff_n$ in Proposition \ref{curvature} (and its proof), we obtain

\begin{prop}\label{randomcurvature}
The family of admissible manifolds is $\ff_n$-invariant for $\nu_{\epsilon}^{\mathbb{Z}_+}$-almost every $\ff$ and every $n \geq 0$, provided that $\epsilon$ is small enough.
\end{prop}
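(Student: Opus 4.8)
The plan is to transcribe, almost word for word, the argument of Proposition \ref{curvature}, replacing the iterate $f^n$ everywhere by the random composition $\ff_n = f_{n-1}\circ\cdots\circ f_0$. I would organise it in three steps.

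First, I would reduce the measure-theoretic statement to a deterministic one. Let $\mathcal{U}$ be the $C^2$ neighbourhood of $f$ furnished by Proposition \ref{curvature} and the remark following it. Choosing $\epsilon$ small enough that $B_\epsilon(f)\subset\mathcal{U}$, every $\ff=(f_0,f_1,\ldots)$ in the support of $\nu_\epsilon^{\mathbb{Z}_+}$ has all its coordinates $f_i$ in $\mathcal{U}$; since the Bernoulli measure is carried by its support, it suffices to show that the family of admissible manifolds is $\ff_n$-invariant for \emph{every} sequence $\ff$ drawn from $\mathcal{U}$ and every $n\ge0$.

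Second --- and this is the step I expect to be the main obstacle --- I would produce a replacement for the domination estimate (\ref{domination}) that holds along an arbitrary composition of maps in $\mathcal{U}$, not just along the orbit of a single map. Two ingredients are needed. (i) The cone field $S^u$ is strictly invariant under $f$, and strict invariance is a $C^1$-open condition, so after shrinking $\mathcal{U}$ we may assume $Dg_x\big(\overline{S^u_x}\setminus\{0\}\big)\subset\mathrm{int}\,S^u_{g(x)}$ for every $g\in\mathcal{U}$; then $D\ff_n(x)S^u_x\subset\mathrm{int}\,S^u_{\ff_n(x)}$ for every finite string and every $x\in U$. (ii) There are constants $\tau_1<1$ and $n_1\ge n_0$, depending only on $\mathcal{U}$, such that for every such composition the ratio between the expansion of $D\ff_n(x)$ transverse to $S^u$ and its expansion inside $S^u$ is at most $\tau_1^{\,n-n_1}$, uniformly in $x$. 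This is the standard persistence of a dominated splitting under perturbation of the generators of a linear cocycle; I would derive it by a compactness argument, passing to the tangent cocycle over the skew product $U\times\overline{\mathcal{U}}^{\mathbb{Z}_+}\to U\times\overline{\mathcal{U}}^{\mathbb{Z}_+}$ (the base being compact once the $C^1$-closure of $\mathcal{U}$ is compact, by Arzel\`a--Ascoli and Tychonoff), together with the openness of (\ref{expansion}) and (\ref{domination}). The one genuinely new feature compared to the single-map case is that $D\ff_n$ does \emph{not} preserve the central bundle $E^c$, so in the curvature computation the tilt $A_x(y)$ of a tangent space $T_{\tilde y}\tilde N_x$ must be written as a graph over $T_xN$ with values in a \emph{fixed} smooth complement --- for instance $(E^u)^{\perp}$, which is available since $E^u$ is $C^\infty$ --- rather than in $E^c$; the quantity estimated in (ii) is precisely what governs the graph transform with respect to this reference splitting.

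Third, with these ingredients in place, the chart computation in the proof of Proposition \ref{curvature} carries over verbatim: following it line by line but invoking the estimate from step two in place of (\ref{domination}), one finds that the tangent bundle of $\ff_n(N)$ has Lipschitz constant below $K_0$ for every $n>n_1$, while the finitely many earlier iterates $\ff_1(N),\ldots,\ff_{n_1}(N)$ count as admissible by convention --- it suffices to take the integer $n_0$ in the definition of admissible manifold to be $n_1$, which alters nothing elsewhere. Hence the family of admissible manifolds is $\ff_n$-invariant for every $n\ge0$ and every $\ff$ in the support of $\nu_\epsilon^{\mathbb{Z}_+}$, in particular for $\nu_\epsilon^{\mathbb{Z}_+}$-almost every $\ff$.
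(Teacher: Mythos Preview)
Your proposal is correct and follows the same approach as the paper: the paper gives no separate proof at all, stating only that one obtains Proposition \ref{randomcurvature} by ``replacing $f^n$ with $\ff_n$ in Proposition \ref{curvature} (and its proof)'', which is precisely what you do. You supply considerably more detail than the paper --- in particular the observation that $E^c$ is not invariant along a random string and should be replaced by a fixed smooth complement, and the compactness argument for uniform domination along compositions --- but these are elaborations of the same idea rather than a different route.
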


Proposition \ref{randomcurvature} allows us to mimic the construction in section \ref{technique}. Indeed, given some simple admissible measure 
$(\Gamma, \phi)$, we consider the map
\begin{eqnarray*}
 \xi_{(\Gamma, \phi, \ff_n, a)}: \Gamma &\rightarrow& \A \\
x &\mapsto& (\Gamma_x, \phi_x)
\end{eqnarray*}
defined in section \ref{technique},
along with the density $\rho_{(\Gamma, \phi, \ff_n, a)}$ on $\Gamma$, such that 
\[(\xi_{(\Gamma, \phi, \ff_n, a)})_*(\Gamma, \rho_{(\Gamma, \phi, \ff_n, a)})\] 
is a lift of $(\ff_n)_* (\Gamma, \phi)$. Similarily, if $\umu$ lifts $\mu$, then $\Xi_{(\ff_n, a)} \umu$ lifts $(\ff_n)_*\mu$.

Recall that $\T^n \mu = \int_{\Omega} (\ff_n)_* \mu \ d\nue^{\mathbb{Z}_+}$. Hence we we may define a random operator
\begin{eqnarray*}
\Xi_{(\nu_{\epsilon}, n, a)}^{\text{rand}} :\M(\A) &\rightarrow& \M(\A) \\
\umu &\mapsto& \int \Xi_{(\ff_n, a)} \umu d\nue^{\mathbb{Z}_+}. 
\end{eqnarray*}
with the delightful property that if $\umu$ lifts $\mu$, then $\Xi_{(\nu_{\epsilon}, n, a)}^{\text{rand}} \umu$ lifts $\T^n \mu$.  We infer that $\AM$ is invariant under $\T$.

\begin{proof}[Proof of Proposition \ref{stationarydist}]
 Suppose $\mu_{\epsilon}$ is an invariant distribution under $\T$ 
and let $E\subset M$ be any Borel set of zero Lebesgue measure. Then, from (\ref{abscont}), we have
\[\mue(E) = \T \mue (E) = \int \T \delta_x (E) \ d\mue(x) = 0.\]
Hence $\mue$ is absolutely continuous with respect to Lebesgue, and it follows that it can be strongly approximated by an admissible measure. Thus given $\delta>0$ arbitrarily, we may pick some admissible $\mu$ with lift $\umu$, satisfying $\umu(\A_1) = 1$ (recall Proposition \ref{suppA1}),  and such that $\|\mue-\mu\|_s \leq \delta$.

We extend $\T$ to $C^0(M, \mathbb{R})^*$ by requiring
\[ \T \mu (\varphi) = \int_{\Dl(M)} \mu(\varphi \circ f) \ d\nue(f) \quad \forall \varphi \in C^0(M, \mathbb{R}). \]
Given any $\varphi \in C^0(M)$ with $\|\varphi \|_{C^0} \leq 1$, we have 
\[\T \mu(\varphi) \leq \int_{\Dl(M)} \| \mu \|_s \ d\nue(f) = \| \mu \|_s , \]
so that 
\[\|\T \mu \|_s = \sup_{\substack{ \varphi \in C^0(M, \mathbb{R}) \\ \| \varphi \|_{C^0} \leq 1 }}
 \T \mu(\varphi) \leq \| \mu \|_s. 
\]
 In other words, $\T$ acts as a contraction on $C^0(M, \mathbb{R})^*$.
Hence
\[\|\mue - \frac{1}{n} \sum_{k=0}^{n-1}\T^k \mu \|_s
= \|\frac{1}{n} \sum_{k=0}^{n-1} \T^k (\mue-\mu)\|_s  \leq \delta \]
for every $n\geq0$.
Since $\frac{1}{n} \sum_{k=0}^{n-1}\T^k \mu_0$ accumulates on some $\AM(a, C)$, it follows that 
\[\inf_{\mu \in \AM(a, C)} \|\mue - \mu \|_s \leq \delta\] for every $\delta>0$. Therefore, by compactness, $\AM(a, C)$ must contain $\mue$.

By now it should now be evident that every zero noise limit is admissible. Indeed, since $\AM(a, C)$ is a compact space, it contains any accumulation points of stationary distributions $\mue \in \AM(a, C)$. 
\end{proof}

\end{document}